\def\NZQ{\mathbb}               
\def\NN{{\NZQ N}}
\def\ZZ{{\NZQ Z}}
\def\CC{{\NZQ C}}
\newtheorem{Theorem}{Theorem}[section]
\newtheorem{Lemma}[Theorem]{Lemma}
\newtheorem{Corollary}[Theorem]{Corollary}
\newtheorem{Proposition}[Theorem]{Proposition}
\newtheorem{Example}[Theorem]{Example}
\newtheorem{Definition}[Theorem]{Definition}
\let\epsilon\varepsilon
\let\phi=\varphi
\let\kappa=\varkappa
\begin{document}

\title{\'Etoiles and valuations}
\author{Steven Dale Cutkosky}
\thanks{partially supported by NSF}

\subjclass{Primary: 14B25 and 14E15. Secondary: 13B10 and 32B99}

\address{Steven Dale Cutkosky, Department of Mathematics,
University of Missouri, Columbia, MO 65211, USA}
\email{cutkoskys@missouri.edu}

\begin{abstract} We establish some properties of \'etoiles and associated valuations over complex analytic spaces, establishing that Abhyankar's inequality holds. We give some examples of pathological behavior of these valuations. We prove a regularization theorem for complex analytic morphisms. The property of a morphism being regular and the regularization of a morphism play a major role in this theory. 
\end{abstract}

\maketitle
\section{Introduction}

 A local blow up of an analytic space $X$ is a blow up $\pi:X'\rightarrow U$ where $U$ is an open subset of $X$ (in the Euclidean topology) and $\pi$ is the blow up of a closed analytic subspace of $U$.
(An inclusion of an open subset $U$ of $X$ into $X$ is a special case.)

Hironaka defined in \cite{H} and \cite{H3}  an \'etoile $e$ over an analytic space $X$ as a subcategory of the category of finite sequences of local blowups over $X$ which satisfies certain good properties.
 In particular, to each $\pi:X'\rightarrow X\in e$ there is an associated point $e_{X'} \in X'$, and 
 given a factorization 
$$
X'=X_n\stackrel{\pi_n}{\rightarrow}X_{n-1}\rightarrow \cdots \rightarrow X_1\stackrel{\pi_1}{\rightarrow }X
$$
by local blow ups, we have $\pi_i(e_{X_i})=e_{X_{i-1}}$ for all $i$.

In the situation of algebraic geometry (the category of algebraic blowups of an algebraic variety $X$ over a field $k$) an \'etoile $e$ can be represented by a valuation of the function field $k(X)$ of $X$ which dominates the local ring $\mathcal O_{X,e_X}$ (whose quotient field is $k(X)$). This is the original approach of Zariski \cite{LU}. 

The notion of an \'etoile $e$ on a complex analytic space $X$ cannot be immediately modeled in valuation theory, even when $X$ is irreducible and nonsingular, as there exist $\pi:X'\rightarrow X\in e$ such that $X'$ is not locally irreducible, and even when $X'$ is locally irreducible, $\mathcal O_{X',e_{X'}}$ is generally a very big extension field of $\mathcal O_{X,e_X}$.

Valuation theory is an important tool in the birational geometry of algebraic varieties, and it is useful to know which parts of the classical theory for algebraic function fields extend to \'etoiles on an irreducible nonsingular complex analytic space.

In Section \ref{Secval} we  associate to an \'etoile over a reduced complex analytic space  $X$ a valuation $\nu=\nu_e$ on a giant field which depends on the \'etoile $e$.
The valuation ring $V_e$ is constructed by taking the union of $\mathcal O_{X',e_{X'}}$ where $X'\rightarrow X\in e$ is a sequence of local blow ups from nonsingular  varieties.
We establish in Section \ref{Secval} that we have, as in the classical case of valuations of algebraic function fields, that
$$
\mbox{rank }\nu\le \mbox{ratrank }\nu\le \dim X,
$$
and if the rational rank $\mbox{ratrank }\nu=\dim X$, then the value group $\Gamma_{\nu}$ of $\nu$ is isomorphic (as an unordered group) to $\ZZ^{\dim X}$. 

The residue field of the valuation ring $V_e$ associated to an \'etoile $e$ is always isomorphic to  $\CC$. Thus $\nu$ is always a zero dimensional valuation  and we see that Abhyankar's inequality \cite{Ab} for a valuation of a field $K$ which dominates a Noetherian local ring whose quotient field is $K$, holds for the valuation $\nu_e$ associated to an \'etoile $e$.

Unlike in the case of algebraic function fields, a composite valuation which arises from an \'etoile can be very badly behaved, as is shown in the following example.
The existence of examples of this type was a major obstruction to a proof of local monomialization of analytic morphisms.

\begin{Example}(Example \ref{pathval}) There exists an \'etoile $e$ on $Y_0=\CC^4$ such that the valuation ring $V_e$ has a proper prime ideal 
$Q$
 such that there exists an infinite chain
of local blow ups (of a point in $Y_m$ if $m$ is even and of a nonsingular surface if $m$ is odd)
$$
 \cdots\rightarrow Y_m\rightarrow \cdots \rightarrow Y_1\rightarrow Y_0
$$
with $Y_m\rightarrow Y_0\in e$ for all $m$, such that the center of $Q$ on $Y_m$ has dimension 3 if $m$ is even and the center of $Q$ on $Y_m$ has dimension 2 if $m$ is odd.
\end{Example}

The construction begins with an example by
Hironaka, Lejeune and Teissier \cite{HLT}  of a germ of an  analytic map $\phi:(S,a)\rightarrow (V,b)$ from a surface to a 3-fold such that no functions in $\mathcal O_{V,b}$ vanish on the image of $\phi$ but the image becomes a two dimensional  analytic sub variety (a surface) after blowing up $b$.

Hironaka (\cite{H} and \cite{H3}) defines 
La Vo\^ute \'Etoil\'ee as
$$
\mathcal E_X=\mbox{ set of all \'etoiles over $X$ with a topology making $P_X:\mathcal E_X\rightarrow X$, $e\mapsto e_X$ continuous.}
$$
Hironaka proves that  $p_X$ is proper.
This theorem is a generalization of Zariski's theorem \cite{Z}  on the quasi compactness of the Zariski Riemann manifold of an algebraic function field.

If $\phi:Y\rightarrow X$ is a dominant morphism of algebraic varieties over a field $k$ (the Zariski closure of $\phi(Y)$ in $X$ is equal to $X$) then we have a natural inclusion of algebraic function fields $k(X)\rightarrow k(Y)$. Thus a valuation of $k(Y)$ restricts to a valuation of $k(X)$ and a valuation of $k(X)$ can be extended to a valuation of $k(Y)$ (Chapter VI \cite{ZS}).

However, the situation is much more subtle in the case of  complex analytic morphisms of complex analytic spaces (as is exploited in the construction of the above example).

The most useful generalization of the notion of a dominant morphism of algebraic varieties to  analytic morphisms of complex analytic spaces is a regular morphism.

Let $\phi:(Y,b)\rightarrow (X,a)$ be a germ of a morphism of  complex analytic spaces. If $X$ and $Y$ are varieties, then $\phi$ is regular if $\phi(Y)$ contains an open subset of $X$ (in the Euclidean topology).
Let $\mbox{reg}(Y)$ be the nonsingular locus of $Y$.
The morphism $\phi$ is regular if and only if
the open set 
$$
U=\{p\in \mbox{reg}(Y)\mid \mbox{rank }d\phi_p=\dim X\}
$$
is nonempty (see Section \ref{Secpre} or \cite{BM2}).

Gabri\`elov gave an example in \cite{Gab2} showing that  if $\phi$ is not regular, it is possible for the map $\mathcal O_{X,a}\rightarrow \mathcal O_{Y,b}$  of analytic local rings to be injective, but the induced map on completions 
 $\hat{\mathcal O}_{X,a}\rightarrow \hat{\mathcal O}_{Y,b}$ to be not injective. (The Zariski subspace theorem (10.6) \cite{Ab3} fails for analytic maps). 
 Gabri\`elov's example begins with an earlier example of Osgood (explained in Example 2.3 \cite{BM2}).
 If $\phi$ is regular, Gabri\`elov \cite{Gab}  showed that the Zariski subspace theorem holds for regular morphisms.

Suppose that $\phi:X\rightarrow Y$ is a morphism of reduced, irreducible, locally irreducible  complex analytic spaces. It is shown in Proposition \ref{Prop2} that if $e$ is an \'etoile on $Y$, then $e$ induces an \'etoile on $X$ if and only if $\phi$ is regular. It is shown in Proposition \ref{Etoileext} that if $f$ is an \'etoile on $X$, then there exists an \'etoile $e$ on $Y$ that induces $f$ if and only if $\phi$ is regular.

We prove the following regularization theorem,  in Theorem \ref{TheoremRegFin} of Section \ref{Secreg}.

\begin{Theorem}(Theorem \ref{TheoremRegFin}) Suppose that $\phi:Y\rightarrow X$ is a morphism of reduced  complex analytic spaces and $e\in \mathcal E_Y$
is an \'etoile over $Y$. Then there exists a commutative diagram of morphisms
$$
\begin{array}{ccc}
Y_e&\stackrel{\phi_e}{\rightarrow}&X_e\\
\delta\downarrow &&\downarrow \gamma\\
Y&\stackrel{\phi}{\rightarrow}& X
\end{array}
$$
such that $\delta\in e$ is a finite product of local blow ups of nonsingular analytic sub varieties, $\gamma$ is a finite product of local blow ups of nonsingular analytic sub varieties, $Y_e$ and $X_e$ are smooth analytic spaces and $ \phi_e$ is a regular analytic morphism to a nonsingular analytic sub variety of $X_e$.
\end{Theorem}

The proof is rather  delicate, and requires the analysis of \'etoiles of Section \ref{SecBup}. An essential ingredient is the local flattening theorem \cite{HLT} of Hironaka, Lejeune and Teissier   or the later proof by Hironaka \cite{H3}. The local flattening theorem is with reference to a fixed \'etoile $f$ on $X$. We show in Proposition \ref{Prop2} that an \'etoile $e$ on $Y$ only induces an \'etoile on $X$ if $\phi$ is regular. Because of this observation, Theorem \ref{TheoremRegFin} does not follow directly from the local flattening theorem.

We state below the principal theorem for local monomialization of complex analytic spaces along an \'etoile in \cite{C4}. The first proof of Theorem \ref{TheoremB} required Theorem \ref{TheoremRegFin}, but the final proof does not require this, and in fact gives an alternate proof of Theorem \ref{TheoremRegFin} which does not require the local flattening theorem.

\begin{Definition} Suppose that $\phi:Y\rightarrow X$ is an analytic  morphism of complex  analytic manifolds and $p\in Y$. We will say that $\phi$ is monomial at $p$ if there exist regular parameters $x_1,\ldots,x_m$ in $\mathcal O_{X,\phi(p)}$ and $y_1,\ldots,y_n$ in $\mathcal O_{Y,p}$, $r\le m$ and $c_{ij}\in \NN$ such that 
$$
\phi^*(x_i)=\prod_{j=1}^ny_j^{c_{ij}}\mbox{ for }1\le i\le r
$$
with $\phi^*(x_i)=0$ for $r<i\le m$ and 
$\mbox{rank}(c_{ij})=m$.  
\vskip .2truein
We will say that $y_1y_2\cdots y_n=0$ is a local toroidal structure $O$ at $p$
\end{Definition}

\begin{Theorem}\label{TheoremB}(\cite{C4}) Suppose that $\phi:Y\rightarrow X$ is a morphism of reduced complex analytic spaces, $A$ is a closed  analytic  subspace of $Y$  and $e\in\mathcal E_Y$ is an \'etoile over $Y$. Then there exists a commutative diagram of complex analytic morphisms
$$
\begin{array}{ccc}
Y_{e}&\stackrel{\phi_{e}}{\rightarrow}&X_e\\
\beta\downarrow &&\downarrow \alpha\\
Y&\stackrel{\phi}{\rightarrow}& X
\end{array}
$$
such that $\beta\in e$ is a finite product of local blow ups of nonsingular analytic sub varieties, $\alpha$ is a finite product of local blow ups of nonsingular analytic sub varieties, $Y_e$ and $ X_e$ are nonsingular analytic  spaces and $\phi_e$ is a  monomial  analytic morphism for a toroidal structure $O_e$ on $Y_e$.
Further, either the preimage  of $A$ in $Y_e$ is equal to $Y_e$, or $\mathcal I_A\mathcal O_{Y_e}=\mathcal O_{Y_e}(-G)$ where $\mathcal I_A$ is the ideal sheaf in $\mathcal O_Y$ of the analytic subspace $A$  of $Y$, $G$ is an effective divisor which is supported on $O_e$, and has the further condition that the restriction $(Y_e\setminus O_e) \rightarrow Y$ is an open embedding.  
\end{Theorem}

Local monomialization theorems for real analytic morphisms  are also proven in \cite{C4}. Local monomialization along an arbitrary valuation is proven for morphisms of algebraic varieties in characteristic zero in \cite{C1} and \cite{C2}. Counterexamples to local monomialization for a morphism of characteristic $p>0$ algebraic varieties is given in \cite{C5}. A couple of  interesting recent papers which address local monomialization of analytic morphisms and applications are 
\cite{De} by Jan Denef and \cite{Li} by Ben Lichtin.

Hironaka used the local flattening theorem and the fiber cutting lemma in \cite{H3} to prove rectilinearization of real sub analytic sets. A couple of more recent proofs of rectilinearization are given in \cite{DD} and \cite{BM3}. We deduce rectilinearization in \cite{C6} from our local monomialization theorem \cite{C4}, without using local flattening or the fiber cutting lemma.

We thank Jan Denef for suggesting the local monomialization problem for analytic morphisms, and for discussion, encouragement and explanation of possible applications. We also thank Bernard Teissier for discussions on this and related problems.

\section{Preliminaries on  complex analytic spaces}\label{Secpre}

In this section we recall some basic properties of analytic local rings and complex analytic spaces.

\begin{Proposition}\label{anring} Suppose that $X$ is a complex analytic space and $p\in X$. Then 
\begin{enumerate}
\item[1.] $\mathcal O_{X,p}$ is a Noetherian, Henselian, excellent local ring.
\item[2.] $\mathcal O_{X,p}$ is equidimensional if and only if its completion $\hat{\mathcal O}_{X,p}$ is equidimensional.
\item[3.] $\mathcal O_{X,p}$ is reduced if and only if $\hat{\mathcal O}_{X,p}$ is reduced.
\item[4.] $\mathcal O_{X,p}$ is a domain if and only if $\hat{\mathcal O}_{X,p}$ is a domain.
\end{enumerate}
\end{Proposition}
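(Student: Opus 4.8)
The plan is to reduce everything to standard facts about the convergent power series ring and its relation to the completion. By definition of a complex analytic space, $\mathcal O_{X,p}$ is a quotient of a convergent power series ring $\CC\{x_1,\dots,x_n\}$, and $\hat{\mathcal O}_{X,p}$ is the corresponding quotient of the formal power series ring $\CC[[x_1,\dots,x_n]]$. For (1) I would invoke the classical structure theory: Noetherianity and Henselianity of $\CC\{x_1,\dots,x_n\}$ follow from the Weierstrass preparation and division theorems (Henselianity also from the analytic implicit function theorem), while excellence of $\CC\{x_1,\dots,x_n\}$ is a known result, and all three properties descend to quotients. The crucial consequence I will use repeatedly is that, since $\mathcal O_{X,p}$ is excellent, it is a $G$-ring, so the completion map $\mathcal O_{X,p}\to\hat{\mathcal O}_{X,p}$ is regular; in particular its fibers are geometrically regular, hence geometrically reduced.

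For (3), one direction is formal: $\mathcal O_{X,p}\to\hat{\mathcal O}_{X,p}$ is faithfully flat, hence injective, so if $\hat{\mathcal O}_{X,p}$ is reduced then so is the subring $\mathcal O_{X,p}$. For the converse I would use the standard ascent statement that a flat local homomorphism with reduced fibers carries a reduced base to a reduced total ring; combined with the geometrically reduced formal fibers coming from (1), reducedness of $\mathcal O_{X,p}$ forces reducedness of $\hat{\mathcal O}_{X,p}$. For (2), the implication $\hat{\mathcal O}_{X,p}$ equidimensional $\Rightarrow$ $\mathcal O_{X,p}$ equidimensional is general: each minimal prime $\mathfrak p$ of $\mathcal O_{X,p}$ is the contraction of some minimal prime $\mathfrak q$ of $\hat{\mathcal O}_{X,p}$ lying over it (flatness), and since $\hat{\mathcal O}_{X,p}/\mathfrak p\hat{\mathcal O}_{X,p}$ is the completion of $\mathcal O_{X,p}/\mathfrak p$ one gets $\dim \mathcal O_{X,p}/\mathfrak p \ge \dim\hat{\mathcal O}_{X,p}/\mathfrak q=\dim\hat{\mathcal O}_{X,p}=\dim\mathcal O_{X,p}$. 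The converse is where excellence enters, through universal catenaryness: by Ratliff's theorem a universally catenary equidimensional Noetherian local ring is quasi-unmixed, i.e.\ has equidimensional completion.

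For (4), the implication $\hat{\mathcal O}_{X,p}$ domain $\Rightarrow \mathcal O_{X,p}$ domain is again automatic from injectivity into a domain. The converse is the subtle point, and is exactly where Henselianity is indispensable (it fails for general excellent local domains, as the node shows). I would argue: if $\mathcal O_{X,p}$ is a domain then, being excellent, it is Nagata, so its normalization is module-finite; since $\mathcal O_{X,p}$ is Henselian this finite extension is a product of local rings, and being a domain it is in fact local, so $\mathcal O_{X,p}$ is unibranch, and geometrically unibranch because the residue field $\CC$ is algebraically closed. Excellence then propagates geometric unibranchness to $\hat{\mathcal O}_{X,p}$, which therefore has a unique minimal prime; together with reducedness from (3) this shows $\hat{\mathcal O}_{X,p}$ is a domain.

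The main obstacle will be part (4): unlike the other items, it genuinely requires the interplay of excellence (to control the behavior of branches under completion and to guarantee finiteness of the normalization) with Henselianity (to make the normalization local). The excellence of $\CC\{x_1,\dots,x_n\}$ underlying (1) is the deepest external input, but once it is in hand the regular-formal-fiber machinery makes (2) and (3) essentially routine.
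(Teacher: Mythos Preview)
Your proposal is correct and follows essentially the same route as the paper: both reduce all four items to standard permanence properties of excellent (and, for (4), Henselian) local rings under completion. The paper simply cites the relevant EGA~IV results (Scholie~7.8.3 for (2)--(3), Corollary~18.9.2 for (4)) and Nagata/Matsumura for (1), whereas you unpack the underlying arguments (regular formal fibers, Ratliff's theorem, the unibranch-via-normalization argument); mathematically these are the same proofs at different levels of detail.
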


\begin{proof} The fact that $\mathcal O_{X,p}$ is Noetherian and Henselian  is proven in Theorem 45.5, and by fact 43.4, \cite{N}. Excellence is proven   in Section 18 of \cite{EGAIV} (or Theorem 102, page 291 \cite{Ma}), and by (ii) of Scholie 7.8.3 \cite{EGAIV}.
Let $A=\mathcal O_{X,p}$. Since $A$ is a local ring, the natural map $A\rightarrow \hat A$ is
an inclusion. $A$ and $\hat A$ have the same Krull dimension (formula 1' of page 175 \cite{Ma}). Statements 2 and 3 follow from (vii) and (x) of Scholie 7.8.3 \cite{EGAIV}.
 Further,   $\hat{\mathcal O}_{X,p}$ is a domain
  if and only if $\mathcal O_{X,p}$ is a domain by Corollary 18.9.2 \cite{EGAIV}. 
\end{proof}

The dimension $\dim E$ of a subset $E$ of a complex analytic space $X$ and the local dimension $\dim_a E$ of $E$  at a point $a\in X$ are defined
in II.1 and V.4.4 of Lojasiewicz's excellent book \cite{L}. If $E$ is an analytic space, then   $\dim_aE$ is the Krull dimension of $\mathcal O_{E,a}$.

\begin{Lemma} Suppose that $Y$ is a reduced complex analytic space, and $\pi:B\rightarrow Y$ is the blow up of a closed complex analytic
 subspace $E$ of $Y$.  Then $Y$ is reduced. If $Y$ is equidimensional, then $B$ is equidimensional. 
\end{Lemma}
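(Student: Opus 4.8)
The plan is to reduce the statement to a local assertion about the stalks $\mathcal O_{B,q}$ and to exploit the defining property of the blow up, namely that the ideal sheaf $\mathcal I$ of $E$ generates an \emph{invertible} ideal sheaf $\mathcal I\mathcal O_B$ on $B$. (I read the first conclusion as ``$B$ is reduced'', the space $Y$ being reduced by hypothesis.) Since reducedness and equidimensionality are local, I would fix $q\in B$, set $p=\pi(q)$ and $A=\mathcal O_{Y,p}$, and shrink $Y$ to a neighbourhood of $p$ on which $\mathcal I$ is generated by finitely many sections $f_0,\dots,f_n\in A$. Then $B$ is covered by the charts $B_i\subseteq Y\times\PP^n$, and on $B_i$ the ideal $\mathcal I\mathcal O_B$ is generated by the single element $f_i$. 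The crucial point is that $\mathcal I\mathcal O_B$ is invertible; hence $(\mathcal I\mathcal O_B)_q=f_i\mathcal O_{B,q}$ is free of rank one, so multiplication by $f_i$ on $\mathcal O_{B,q}$ is injective and $f_i$ is a nonzerodivisor in $\mathcal O_{B,q}$.

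For reducedness I would use this nonzerodivisor to embed $\mathcal O_{B,q}$ into its own localization: the map $\mathcal O_{B,q}\hookrightarrow (\mathcal O_{B,q})_{f_i}$ is injective, so it suffices to see that $(\mathcal O_{B,q})_{f_i}$ is reduced. Over the locus $\{f_i\neq0\}$ the morphism $\pi$ is an isomorphism: indeed $\{f_i\neq 0\}\subseteq Y\setminus E$, where $\pi$ is biholomorphic, and inverting $f_i$ in the chart $B_i$ forces each blow-up coordinate $x_j/x_i$ to equal $f_j/f_i$. Thus $(\mathcal O_{B,q})_{f_i}\cong A_{f_i}$, a localization of the reduced ring $A=\mathcal O_{Y,p}$ (reduced since $Y$ is reduced), and hence reduced. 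Therefore $\mathcal O_{B,q}$ is reduced, and so $B$ is reduced.

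For equidimensionality, assume every component of $Y$ has dimension $d$. The same nonzerodivisor property shows that no irreducible component of $B$ is contained in the exceptional locus $E':=V(\mathcal I\mathcal O_B)$: locally $E'=V(f_i)$, and a component lying in $E'$ would put $f_i$ into a minimal, hence associated, prime of $\mathcal O_{B,q}$, contradicting that $f_i$ is a nonzerodivisor. Consequently $B\setminus E'$ is dense in $B$, and since $\pi$ restricts to an isomorphism $B\setminus E'\xrightarrow{\sim}Y\setminus E$, the irreducible components of $B$ are exactly the closures $\widetilde Y_j$ of $\pi^{-1}(Y_j\setminus E)$, where $Y_j$ ranges over the components of $Y$ not contained in $E$. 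Each $\widetilde Y_j$ contains the dense open set $\pi^{-1}(Y_j\setminus E)\cong Y_j\setminus E$, a nonempty open subset of the irreducible $d$-dimensional space $Y_j$, so $\dim\widetilde Y_j=d$. Thus all components of $B$ have dimension $d$ and $B$ is equidimensional. (If one prefers the local-ring formulation, one passes to completions via Proposition \ref{anring}(2) and uses that the analytic stalks are excellent, hence catenary.)

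The main obstacle I anticipate is justifying the two ``transport'' steps in the analytic category rather than the algebraic one: first, the identification $(\mathcal O_{B,q})_{f_i}\cong A_{f_i}$, which requires that the analytic chart algebra of the blow up, after inverting $f_i$, collapses to $A_{f_i}$ exactly as the algebraic affine blow-up algebra $A[\mathcal I/f_i]$ does; and second, the basic structural facts that $\mathcal I\mathcal O_B$ is invertible and that the analytic blow up is covered by the charts $B_i$. Both are standard properties of the analytic blow up, but they are the genuine input; once they are in hand, reducedness and equidimensionality follow formally as above. A secondary point is to ensure the dimension bookkeeping for $\widetilde Y_j$ uses that an irreducible complex analytic set is of pure dimension, so that the dimension of the closure equals the dimension of its dense open part.
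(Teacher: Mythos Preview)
Your reducedness argument hinges on the identification $(\mathcal O_{B,q})_{f_i}\cong A_{f_i}$, and you were right to flag it as the main obstacle: it is genuinely false in the analytic category. Take $Y=\CC^2$, $E=\{0\}$, $A=\CC\{x,y\}$, and let $q$ be the origin of the chart with coordinates $u=x$, $v=y/x$, so that $\mathcal O_{B,q}=\CC\{u,v\}$ and $f_0=x=u$. The induced map $A_{f_0}=\CC\{x,y\}[x^{-1}]\to\CC\{u,v\}[u^{-1}]$ is not surjective: the germ $e^v\in\CC\{u,v\}$ is not in the image, since any element of $\CC\{x,y\}[x^{-1}]$ has the form $g(x,y)/x^N$ with $g$ convergent at the origin, whereas $x^Ne^{y/x}=\sum_{n\ge0}y^nx^{N-n}/n!$ involves arbitrarily negative powers of $x$. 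The underlying point is that in the analytic category, localizing a stalk at $f_i$ is \emph{not} the same as restricting to the open set $\{f_i\ne0\}$; the algebraic identity $A[I/f_i]_{f_i}=A_{f_i}$ has no analytic analogue at the level of stalks. So from the embedding $\mathcal O_{B,q}\hookrightarrow(\mathcal O_{B,q})_{f_i}$ alone you cannot conclude reducedness.

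The paper sidesteps this by staying algebraic as long as possible: one forms the scheme $P=\mbox{Proj}(\bigoplus_{n\ge0}I^n)$ over $\mbox{Spec}(A)$, where standard scheme theory gives that $P$ is reduced (and equidimensional when $A$ is). The passage to the analytic side uses that $\mathcal O_{B,q}$ and the algebraic local ring $\mathcal O_{P,q'}$ at the corresponding point share the same completion; excellence of $\mathcal O_{P,q'}$ then transports reducedness and equidimensionality to $\hat{\mathcal O}_{B,q}$, and Proposition~\ref{anring} brings them back down to $\mathcal O_{B,q}$. Your nonzerodivisor observation is correct, and your equidimensionality argument (no component of $B$ lies in $E'$, so the components of $B$ are the strict transforms of the components of $Y$ not contained in $E$) is sound and gives a pleasant geometric alternative for that half of the lemma; it is only the reducedness step that needs the completion route, or some genuinely different local argument.
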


\begin{proof} Suppose that $q\in B$. Let $p=\pi(q)\in Y$. Then $A=\mathcal O_{Y,p}$ is reduced (respectively equidimensional if $Y$ is equidimensional) 
Let $I=\mathcal I_{E,p}$ be the stalk of the ideal of $E$ in $\mathcal O_{Y,p}$. The $A$-scheme $P=\mbox{Proj}(\bigoplus_{n\ge 0} I^n)$ is
reduced (respectively equidimensional if $Y$ is equidimensional) (Section 7 of Chapter II \cite{Ha}). There exists a point $q'$ above $p$ in $P$ such that $\mathcal O_{B,q}$ is the analytification of the local ring $\mathcal O_{P,q'}$, so that these two local rings have the same completion. $\mathcal O_{P,q'}$ is excellent (by (ii) of Scholie 7.8.3 \cite{EGAIV}). Thus 
the completion of $\mathcal O_{P,q'}$ is
reduced  (respectively equidimensional if $Y$ is equidimensional), and so $\mathcal O_{B,q}$ is reduced (respectively equidimensional if $Y$ is equidimensional).
\end{proof}

Let $\mbox{Reg}(X)$ denote the open subset of nonsingular points of a complex analytic space $X$.

Suppose that $\phi:X\rightarrow Y$ is a morphism of complex analytic spaces. Suppose that $p\in X$ and $q=\phi(p)$. Let 
$\phi_p^*:\mathcal O_{Y,q}\rightarrow \mathcal O_{X,p}$ be the induced homomorphism of germs of analytic functions, with associated homomorphism
${\hat \phi}_p^*:\hat {\mathcal O}_{Y,q}
\rightarrow \hat {\mathcal O}_{X,p}$ of complete local rings. 

Suppose that $X$  and   $Y$ are reduced and $\phi:X\rightarrow Y$ is a morphism. For $a\in \mbox{Reg}(X)$, define
$\mbox{rank}_a(\phi)$ to be the rank of the map on tangent spaces $d\phi_a:T(X)_{a}\rightarrow T(Y)_{\phi(a)}$, and
$$
\mbox{rank}(\phi)=\max\{\mbox{rank}_a\phi\mid a\in \mbox{reg}(X)\},
$$
and for $p\in X$ (possibly not in $\mbox{Reg}(X)$),
$$
\mbox{rank}_p(\phi)=\min\{\mbox{rank}(\phi|U)\mbox{ such that } U\mbox{ is an open neighborhood of $p$}\}.
$$
We have that $\mbox{rank}(\phi)=\dim\phi(X)$ by Theorem 4 of V.3.3 \cite{L}.

If $X$ is irreducible, then
for $p$ in $X$,
we have that 
\begin{equation}\label{eqreg1}
\mbox{rank}(\phi)=\mbox{rank}_p(\phi)=\dim_{\phi(p)} \phi(X)=\dim \phi(X),
\end{equation}
by Theorem 4 and Corollary 2 of V.3.3 \cite{L}.

\begin{Definition} Suppose that $\phi:X\rightarrow Y$ is a morphism of reduced, irreducible   complex analytic spaces. 
$\phi$ is said to be regular  if $\phi(X)$ contains an open subset of $Y$.
\end{Definition}

Suppose that $\phi:X\rightarrow Y$ is an analytic morphism of connected complex analytic manifolds. Let
$$
U=\{q\in X\mid \mbox{rank }d\phi_q=\dim Y\}.
$$
Then $Z=X\setminus U$ is an analytic subspace of  $X$ and by Theorem 4 of V.3.3 \cite{L}, $U\ne \emptyset$ if and only if $\phi$ is regular.

\begin{Lemma}\label{Lemma5} Suppose that $\phi:X\rightarrow Y$ is a  regular morphism of reduced, irreducible, locally irreducible complex analytic spaces.
 Then there exists a nowhere dense closed analytic subset $G$ of $X$ such that $\phi(X\setminus G)$ is an open subset of $Y$, the restriction $\phi|(X\setminus G)$ is 
 an open mapping and $\dim \phi(G)<\dim Y$.
 \end{Lemma}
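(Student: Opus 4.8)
The plan is to take $G$ to be the locus where the fibres of $\phi$ are larger than generic, and to invoke Remmert's open mapping theorem on its complement. Write $m=\dim X$ and $n=\dim Y$. Since $\phi$ is regular, $\phi(X)$ contains an open subset of $Y$, so $\dim\phi(X)=n$, and hence $\mbox{rank}(\phi)=n$ by (\ref{eqreg1}). As $X$ is irreducible, the generic fibre dimension of $\phi$ is therefore $e:=m-n$. First I would introduce the fibre-dimension locus
\[
F=\{p\in X\mid \dim_p\phi^{-1}(\phi(p))>e\}.
\]
By the upper semicontinuity of fibre dimension (Cartan--Remmert; see \cite{L}), $F$ is a closed analytic subset of $X$, and since the generic fibre has dimension exactly $e$, it is a proper subset; because $X$ is irreducible, $F$ is thin. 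I will then set $G=F$.

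Next I would verify the first two required properties. On $X\setminus F$ every fibre of $\phi$ has dimension exactly $e=\dim_p X-\dim_{\phi(p)}Y$, using that $X$ and $Y$ are pure-dimensional, being irreducible. Since $Y$ is locally irreducible, Remmert's open mapping theorem (\cite{L}) then shows that $\phi|X\setminus F$ is an open mapping; as $X\setminus F$ is open in $X$, its image $\phi(X\setminus F)$ is open in $Y$. This gives the first two assertions with $G=F$. (It is worth noting that the local irreducibility hypothesis on $Y$ enters precisely here, which is what makes the open mapping theorem applicable.)

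The remaining, and main, point is the dimension bound $\dim\phi(F)<n$. The difficulty is that $\phi$ need not be proper, so $\phi(F)$ need not be analytic and one cannot argue by simply bounding $\dim\phi(F)\le\dim F$. Instead I would choose an irreducible component $W$ of $F$ whose image has maximal dimension, $\dim\phi(W)=\dim\phi(F)$. For $y$ in a dense subset of $\phi(W)$, the set $F\cap\phi^{-1}(y)$ is a union of components of $\phi^{-1}(y)$ of dimension $>e$, and by matching these components up over a dense subset one sees that $W$ contributes a piece of dimension $\ge e+1$; thus the generic fibre of $\phi|W$ has dimension $\ge e+1$. Applying the theorem on fibre dimension in \cite{L} to the irreducible $W$ gives $\dim\phi(W)=\dim W-(\text{generic fibre dimension})\le m-(e+1)=n-1$. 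Hence $\dim\phi(F)=\dim\phi(W)\le n-1<n=\dim Y$, and taking $G=F$ completes the argument.

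I expect this last estimate to be the main obstacle: controlling the image of the degeneracy locus $F$ in the absence of properness. Everything else reduces to citing the standard semicontinuity and open mapping results of complex analytic geometry, together with the rank identity (\ref{eqreg1}) that forces the generic fibre dimension to be $m-n$.
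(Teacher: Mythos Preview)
Your approach is the same as the paper's: $G$ is taken to be the jump locus of the fibre dimension, Cartan--Remmert gives that $G$ is closed analytic and thin, and Remmert's open mapping theorem handles openness on $X\setminus G$. The only difference is in the final estimate $\dim\phi(G)<\dim Y$, and here you make the argument harder than it is.

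Properness is irrelevant. The inequality $\dim\phi(A)\le\dim A-\min_{x\in A}\dim\ell_x(\phi|A)$ (Theorem~2, V.3.2 \cite{L}) holds for any analytic set $A$, with no hypothesis on $\phi(A)$. The point you need, and which your phrase ``matching these components up over a dense subset'' is groping towards, is simply that $\dim\ell_x(\phi|G)>e$ for \emph{every} $x\in G$: if $x\in G$ then some irreducible component $C$ of the germ of $\phi^{-1}(\phi(x))$ at $x$ has dimension $\ge e+1$, and every point of $C$ again satisfies $\dim_{x'}\phi^{-1}(\phi(x'))\ge e+1$, so $C\subset G\cap\phi^{-1}(\phi(x))$. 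Once you have this, apply the inequality directly to $G$:
\[
\dim\phi(G)\le\dim G-(e+1)<\dim X-e=\dim Y,
\]
exactly as the paper does. The detour through a maximal-image component $W$ is then unnecessary, and in any case your argument for the generic fibre dimension of $\phi|W$ would need precisely this observation (applied at a point of $W$ off the other components of $G$) to be made rigorous.
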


 \begin{proof} For $x\in X$, let $\ell_x\phi$ be the germ at $x$ of the fiber of $\phi(x)$ by $\phi$ (defined on page 267 of V.3.2 \cite{L}).
  By the Cartan Remmert Theorem (Theorem 5, V.3.3 \cite{L}), $\dim \ell_x\phi$ is upper semi continuous
on $X$ in the analytic Zariski topology. Let 
$$
t=\min\{\dim \ell_x\phi\mid x\in X\}.
$$
We have that 
$$
t=\dim X- \mbox{rank}(\phi)=\dim X - \dim \phi(X)=\dim X - \dim Y,
$$
by formula (1) of V.3.3 \cite{L}, Theorem 4, V.3.3 \cite{L} and the assumption that $\phi$ is regular.
 Now
$$
G=\{x\in X\mid \dim \ell_x\phi>t\}
$$
 is a proper subset of $X$ which is closed in the analytic Zariski topology,
so that it is a thin set (Proposition of II.3.5 \cite{L}),
and $V=X\setminus G$ is an open subset of $X$  on which $\phi$ has constant minimal fiber dimension $t$.
Further, by  Remmert's Rank Theorem (Theorem 1 of V.6 \cite{L}), for every $p\in X$ there exist arbitrarily small neighborhoods $U$ of $p$ in $X$ such that
$\phi(U)$ is locally analytic in $Y$,  of  dimension $\dim X -t$. We further have that
$$
\dim \phi(G)\le \dim G-(t+1)<\dim X-t=\dim Y
$$
 by Theorem 2, V.3.2 \cite{L}, since $\dim \ell_x(\phi\mid G)>t$ for all $x\in G$.

Finally, by  Remmert's Open Mapping Theorem (Theorem 2, V.6, \cite{L}),  the restriction of $\phi$
to $X\setminus G$ is an open mapping to $Y$, since $t=\dim X -\dim Y$.
\end{proof}

\begin{Proposition}\label{Propreg} Suppose that $\phi:X\rightarrow Y$ is a morphism of  irreducible nonsingular complex analytic spaces, and 
$\phi$ is regular. Then $\hat\phi^*:\hat{\mathcal O}_{Y,\phi(p)}\rightarrow \hat{\mathcal O}_{X,p}$ is 1-1 for all $p\in X$.
\end{Proposition}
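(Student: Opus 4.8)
The plan is to reduce the statement to a purely formal computation with the Jacobian matrix of $\hat\phi^*$, using the characteristic-zero hypothesis in an essential way. Since $X$ and $Y$ are nonsingular, $\mathcal O_{X,p}$ and $\mathcal O_{Y,q}$ (with $q=\phi(p)$) are regular local rings with residue field $\CC$, so by the Cohen structure theorem their completions are formal power series rings $A=\hat{\mathcal O}_{X,p}\iso\CC[[x_1,\dots,x_n]]$ and $B=\hat{\mathcal O}_{Y,q}\iso\CC[[y_1,\dots,y_m]]$, where $n=\dim X$ and $m=\dim Y$. Writing $f=\hat\phi^*$, this is a local $\CC$-homomorphism determined by $f(y_i)=f_i\in\mathfrak m_A$, and I must show $\ker f=0$.

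First I would record what regularity gives. By \eqref{eqreg1} and the assumption that $\phi$ is regular, $\mbox{rank}_p(\phi)=\dim\phi(X)=\dim Y=m$. By the definition of $\mbox{rank}_p(\phi)$ this means that in every neighborhood of $p$ there is a (regular) point where $d\phi$ has rank $m$. Consequently, the $m\times n$ Jacobian matrix $J=(\partial f_i/\partial x_j)$, whose entries lie in $\mathcal O_{X,p}\subseteq A$ (the $f_i$ being convergent series), cannot have all its $m\times m$ minors vanish near $p$; so some minor is a nonzero germ. Since $\mathcal O_{X,p}\hookrightarrow A$, that minor is nonzero in $A$, whence $J$ has rank exactly $m$ over $K=\mbox{Frac}(A)$, i.e. the rows of $J$ are linearly independent over $K$.

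Next I would run the classical Jacobian argument, but formally. Suppose $\ker f\neq 0$. Since $f$ is local, $\ker f\subseteq\mathfrak m_B$, so every nonzero element has order $\ge 1$; choose $g\in\ker f$ nonzero of minimal order $d\ge 1$. Applying the formal derivations $\partial/\partial x_j$ to the relation $g(f_1,\dots,f_m)=0$ and using the chain rule gives $\sum_i (\partial g/\partial y_i)(f)\,(\partial f_i/\partial x_j)=0$ for all $j$; that is, the vector with entries $(\partial g/\partial y_i)(f)=f(\partial g/\partial y_i)$ is a left null vector of $J$. Because $J$ has rank $m$ over $K$, this forces $f(\partial g/\partial y_i)=0$, i.e. $\partial g/\partial y_i\in\ker f$ for every $i$. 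Here the characteristic-zero hypothesis closes the argument: if $g_d$ denotes the lowest-degree homogeneous part of $g$, Euler's identity $\sum_i y_i\,\partial g_d/\partial y_i = d\,g_d$ with $d\ge 1$ shows some $\partial g/\partial y_{i_0}$ is nonzero of order $d-1<d$, contradicting the minimality of $d$ (and if $d=1$ this element would be a unit lying in $\mathfrak m_B$, equally absurd). Hence $\ker f=0$ and $\hat\phi^*$ is injective.

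The step I expect to be the main obstacle is the transfer carried out in the second paragraph: converting the geometric rank condition \eqref{eqreg1}, a statement about $d\phi_a$ at analytic points $a$ near $p$, into the algebraic assertion that the formal Jacobian $J$ has rank $m$ over $\mbox{Frac}(\hat{\mathcal O}_{X,p})$. The point is that the relevant $m\times m$ minors are germs of analytic functions that are nonzero near $p$ precisely because maximal rank $m$ is attained arbitrarily close to $p$, combined with the injectivity of $\mathcal O_{X,p}\to\hat{\mathcal O}_{X,p}$ recorded in the proof of Proposition \ref{anring}, which keeps such a minor nonzero in the completion. Once the rank of $J$ over $K$ is pinned at $m$, the remaining formal manipulation and the characteristic-zero descent are routine.
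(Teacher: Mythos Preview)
Your argument is correct. The paper's own proof is a one-line citation of Lemma~4.2 of Gabrielov \cite{Gab}: once \eqref{eqreg1} gives $\mbox{rank}_p(\phi)=\dim Y$ for every $p$, injectivity of $\hat\phi^*$ is exactly what that lemma asserts. What you have written is in effect a self-contained proof of Gabrielov's lemma in this setting --- the classical formal Jacobian/chain-rule argument together with the characteristic-zero minimal-order descent --- so the two approaches coincide in substance; you have simply unpacked the cited result rather than quoting it. Your identification of the key transfer step (from the analytic rank condition near $p$ to the rank of $J$ over $\mbox{Frac}(\hat{\mathcal O}_{X,p})$, via nonvanishing of some $m\times m$ minor as a germ and the injection $\mathcal O_{X,p}\hookrightarrow\hat{\mathcal O}_{X,p}$) is exactly the point where the geometry enters, and your handling of it is sound.
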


\begin{proof} We have that for all $p\in X$, $\mbox{rank}_p(\phi)=\mbox{rank}(\phi)= \dim \phi(X)$, by  (\ref{eqreg1}), so that 
$\hat\phi_p:\hat{\mathcal O}_{Y,\phi(p)}\rightarrow \hat {\mathcal O}_{X,p}$ is 1-1 for all $p\in X$ by Lemma 4.2 \cite{Gab}.
\end{proof} 

\begin{Lemma}\label{coeffield} Suppose that $A$ is an analytic local ring and $\mathfrak p$ is a prime ideal in $A$. Then there exists a field $K\subset A_{\mathfrak p}$ such that the induced map to the residue field $K\rightarrow (A/\mathfrak p)_{\mathfrak p}$ is a finite field extension.
\end{Lemma}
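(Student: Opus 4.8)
The plan is to produce $K$ as the fraction field of a regular analytic subalgebra of $A$ over which the residue field of $A_{\mathfrak p}$ is finite, the subalgebra coming from a Noether normalization. Write $A=R/J$ with $R=\CC\{x_1,\dots,x_n\}$, let $\mathfrak P\subset R$ be the preimage of $\mathfrak p$, and set $d=\dim R/\mathfrak P=\dim A/\mathfrak p$. Since $A/\mathfrak p\iso R/\mathfrak P$, the residue field of $A_{\mathfrak p}$ is $A_{\mathfrak p}/\mathfrak pA_{\mathfrak p}=(A/\mathfrak p)_{\mathfrak p}=\mbox{Frac}(R/\mathfrak P)$, so it suffices to find $K\subset A_{\mathfrak p}$ mapping finitely onto a subfield of $\mbox{Frac}(R/\mathfrak P)$.

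First I would invoke the local (analytic) Noether normalization: after a generic $\CC$-linear change of coordinates on $R$, the subalgebra $S=\CC\{x_1,\dots,x_d\}$ maps into $R/\mathfrak P$ by a module-finite, injective homomorphism; equivalently $\mathfrak P\cap S=0$ and $R/\mathfrak P$ is a finite $S$-module. This is the one genuinely analytic input; everything else is formal localization theory. Module-finiteness forces $\mbox{Frac}(R/\mathfrak P)$ to be a finite field extension of $\mbox{Frac}(S)$: the ring $(R/\mathfrak P)\tensor_S\mbox{Frac}(S)$ is the localization of the domain $R/\mathfrak P$ at $S\setminus\{0\}$, hence a domain, and it is finite dimensional over $\mbox{Frac}(S)$, hence a field, so it equals $\mbox{Frac}(R/\mathfrak P)$.

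Next I would transport $S$ into $A_{\mathfrak p}$. Since $J\subseteq\mathfrak P$ we have $J\cap S\subseteq\mathfrak P\cap S=0$, so the composite $S\hookrightarrow R\to A$ is injective and identifies $S$ with a subalgebra of $A$. Moreover every nonzero $s\in S$ lies outside $\mathfrak P$, hence its image in $A$ lies outside $\mathfrak p$ and is a unit in $A_{\mathfrak p}$. Thus $S\setminus\{0\}$ is carried into the units of $A_{\mathfrak p}$, and the map $S\to A_{\mathfrak p}$ extends to an embedding $\mbox{Frac}(S)\hookrightarrow A_{\mathfrak p}$; I take $K$ to be its image, a subfield of $A_{\mathfrak p}$.

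Finally, composing $K\hookrightarrow A_{\mathfrak p}$ with the residue map $A_{\mathfrak p}\to(A/\mathfrak p)_{\mathfrak p}=\mbox{Frac}(R/\mathfrak P)$ gives a nonzero homomorphism of fields, hence an injection, whose image is $\mbox{Frac}(S)$ inside $\mbox{Frac}(R/\mathfrak P)$; by the second paragraph this is a finite field extension, so $K$ has the required property. The only step I expect to need genuine care is the Noether normalization: one must check that a single generic linear change of coordinates simultaneously yields injectivity ($\mathfrak P\cap S=0$) and module-finiteness of $R/\mathfrak P$ over $S$, which is precisely the classical local representation theorem for analytic algebras in characteristic zero.
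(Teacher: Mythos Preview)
Your proof is correct and follows essentially the same route as the paper's: both present $A$ as a quotient of $\CC\{x_1,\dots,x_n\}$, lift $\mathfrak p$ to a prime $\mathfrak P$ upstairs, apply local analytic Noether normalization to get a convergent power series subring $S=\CC\{x_1,\dots,x_d\}$ mapping finitely and injectively into $\CC\{x_1,\dots,x_n\}/\mathfrak P$, and take $K=\mbox{Frac}(S)\hookrightarrow A_{\mathfrak p}$. The paper cites this normalization step as the Proposition of III.2.5 in Lojasiewicz, which is exactly the ``local representation theorem'' you invoke; your version merely spells out the localization and finiteness verifications in more detail.
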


\begin{proof} We have a representation $A\cong \mathcal O_n/I$ for some $n$ where $I$ is an ideal in the ring $\mathcal O_n$  of germs of analytic functions at the origin in $\CC^n$. There is a prime ideal $P$ in $\mathcal O_n$ containing $I$, such that $P/I\cong\mathfrak p$.
By the Proposition of III.2.5 \cite{L}, there exists a set of coordinates $z_1,\ldots, z_n$ in $\mathcal O_n$, so that $\mathcal O_n=C\{z_1,\ldots,z_n\}$,
and $k\le n$ such that the induced map $\mathcal \CC\{z_1,\ldots,z_k\}\rightarrow \mathcal O_n/P$ is a 1-1 finite map. In particular,
$\CC\{z_1,\ldots,z_k\}\cap P=(0)$. Thus the induced map $\CC\{z_1,\ldots,z_k\}\rightarrow \mathcal O_n/I\cong A$ is 1-1 and $\mathfrak p\cap \CC\{z_1,\ldots,z_k\}=(0)$, so that we have an inclusion of the quotient field $K=\CC\{\{z_1,\ldots,z_k\}\}$ into $A_{\mathfrak p}$, such that 
$(A/\mathfrak p){\mathfrak p}$ is finite over $K$.
\end{proof}

A fundamental theorem  in complex analytic geometry is Hironaka's theorem \cite{H2} on the existence of a resolution of singularities of a reduced complex space $X$ (which is countable at infinity), by a sequence of blow ups of nonsingular subvarieties. The sequence is finite if $X$ is compact. In the case of a germ $(X,p)$, this already follows from  Hironaka's Theorem $I_2^{N,n}$ \cite{H1}, since $\mathcal O_{X,p}$ is excellent and reduced. The general Theorem
is proven in the monograph of Aroca, Hironaka and Vicente \cite{AHV}. A simplified proof is given in \cite{BM}.

\section{La Vo\^ute \'Etoil\'ee}

In this section, we recall some definitions and results from \cite{H}.

\begin{Definition}(Definition 1.4 \cite{H}) A morphism $\pi:Y'\rightarrow Y$ of complex analytic spaces is called strict if there exists a complex
analytic subspace $E'$ of $Y'$ such that $\pi$ is \'etale at all points of $Y'\setminus E'$ and $(Y',E')$ is minimal, in the sense that if $Z$ is a closed analytic subspace of $Y'$ such that $Y'\setminus E'=Z\setminus E'$, then $Y'=Z$.
\end{Definition}

Let $Y$ be a complex analytic space. A local blow up of $Y$ (page 418 \cite{H}) is the morphism $\pi:Y'\rightarrow Y$ determined by given $(U,E,\pi)$ where
$U$ is an open subset of $Y$, $E$ is a  closed  analytic subspace of $U$ and $\pi$ is the composite of the blow up of $E$ with the inclusion of $U$ into $Y$.

A sequence of local blow ups of $Y$ is the composite of a finite sequence of local blow ups $(U_i,E_i,\pi_i)$. Any sequence of local blow ups is strict (\cite{H}).

Let $Y$ be a complex analytic space. $\mathcal E(Y)$ will denote the category of morphisms $\pi:Y'\rightarrow Y$ which are sequence of local blow ups.
For $\pi_1:Y_1\rightarrow Y\in \mathcal E(Y)$ and $\pi_2: Y_2\rightarrow Y\in \mathcal E(Y)$, ${\rm Hom}(\pi_1,\pi_2)$ denotes the $Y$-morphisms
$Y_1\rightarrow Y_2$ (morphisms which factor $\pi_1$ and $\pi_2$). ${\rm Hom}(\pi_1,\pi_2)$ has at most one element.

\begin{Definition}\label{EtoileDef} (Definition 2.1 \cite{H}) Let $Y$ be a complex analytic space. An \'etoile over $Y$ is a subcategory $e$ of $\mathcal E(Y)$ having the following properties:
\begin{enumerate}
\item[1)] If $\pi:Y'\rightarrow Y\in e$ then $Y'\ne \emptyset$.
\item[2)] If $\pi_i\in e$ for $i=1,2$, then there exists $\pi_3\in e$ which dominates $\pi_1$ and $\pi_2$; that is, ${\rm Hom}(\pi_3,\pi_i)\ne 0$
for $i=1,2$.
\item[3)] For all $\pi_1;Y_1\rightarrow Y\in e$, there exists $\pi_2:Y_2\rightarrow Y\in e$ such that there exists $q\in {\rm Hom}(\pi_2,\pi)$, and the image $q(Y_2)$ is relatively compact in $Y_1$.
\item[4)] (maximality) If $e'$ is a subcategory of $\mathcal E(Y)$ that contains $e$ and satisfies the above conditions 1) - 3), then $e'=e$.
\end{enumerate}
\end{Definition}

The set of all \'etoiles over $Y$ is denoted by $\mathcal E_Y$.

Using property 3), Hironaka shows that
for $e\in \mathcal E_Y$, and $\pi:Y'\rightarrow Y\in e$, there exists a uniquely determined point $p_{\pi}(e)\in Y'$ (which we will also denote by $e_{Y'}$) which has the property that
if $\alpha:Z\rightarrow Y\in e$ factors as
$$
Z\stackrel{\beta}{\rightarrow} Y' \stackrel{\pi}{\rightarrow} Y,
$$
then $\beta(p_{\alpha}(e))=p_{\pi}(e)$. In particular, we have a natural map $p_Y:\mathcal E_Y\rightarrow Y$ defined by $p_Y(e)=p_{\rm id}(e)$.
Hironaka shows (in Theorem 3.4 \cite{H}) that $\mathcal E_Y$ has  a natural topology so that $p_Y$ is continuous, surjective and proper.

$\mathcal E_Y$ with this topology is called ``La vo\^ute \'etoil\'ee.

\section{blow ups and morphisms along an \'etoile and the distinguished irreducible component} \label{SecBup}

The join  of $\pi_1,\pi_2\in \mathcal E(Y)$ is defined in Proposition 2.9 \cite{H}. We will denote this join by $J(\pi_1,\pi_2)$.
It is a morphism $J(\pi_1,\pi_2):Y_J\rightarrow Y$. 
It has the following universal property: Suppose that $f:Z\rightarrow Y$ is a strict morphism. Then there exists a $Y$-morphism 
$Z\rightarrow Y_J$ if and only if there exist $Y$-morphisms $Z\rightarrow Y_1$ and $Z\rightarrow Y_2$.
It follows from 2.9.2 \cite{H} that if $\pi_1,\pi_2,\in e\in \mathcal E_Y$, then $J(\pi_1,\pi_2)\in e$. 
We describe the construction of Proposition 2.9 \cite{H}. In the case when $\pi_1$ and $\pi_2$ are each local blowups,
which are described by the data $(U_i,E_i,\pi_i)$, $J(\pi_1,\pi_2)$ is the blow up 
$$
J(\pi_1,\pi_2):Y_J=B(\mathcal I_{E_1}\mathcal I_{E_2}\mathcal O_Y|U_1\cap U_2)\rightarrow Y.
$$
Now suppose that $\pi_1$ is a product $\alpha_0\alpha_{1}\cdots \alpha_r$ where $\alpha_i:Y_{i+1}\rightarrow Y_i$ are local blow ups defined by the data
$(U_i,E_i,\alpha_i)$, and $\pi_2$ is a product $\alpha_0'\alpha_1'\cdots\alpha_r'$ where $\alpha_i':Y_{i+1}'\rightarrow Y_i'$ are local blow ups defined by the data $(U_i',E_i',\alpha_i')$. We may assume (by composing with identity maps) that the length of each sequence is a common value $r$.
We define $J(\pi_1,\pi_2)$ by induction on $r$. Assume that $J_{r}=J(\alpha_{0}\alpha_1\cdots\alpha_{r-1},\alpha_{0}'\alpha_1'\cdots\alpha_{r-1}')$ has been constructed,
with projections $\gamma:Y_{J_r}\rightarrow Y_{r}$ and $\delta:Y_{J_r}\rightarrow Y_{r}'$. Then we define $J(\pi_1,\pi_2)$ to be the blow up
$$
J(\pi_1,\pi_2):Y_{J}=B(\mathcal I_{E_{r}}\mathcal I_{E'_{r}}\mathcal O_{J_{r}}|\gamma^{-1}(U_r)\cap \delta^{-1}(U_r'))\rightarrow Y.
$$

Suppose that $e\in \mathcal E_Y$ is an \'etoile. By Lemma 2.3 \cite{H}, there exists a point $p_{\pi}(e)\in Y'$ for all 
$\pi:Y'\rightarrow Y\in e$, such that if $\pi_1,\pi_2\in e$ and $\phi\in\mbox{Hom}(\pi_1,\pi_2)$, then
\begin{equation}\label{eq3}
p_{\pi_2}(e)=\phi(p_{\pi_1}(e)).
\end{equation}
(Condition 3) of Definition \ref{EtoileDef} is essential for this result.)
Suppose that $Y$ is a reduced complex analytic space, $e\in \mathcal E_Y$ and $\pi:Y'\rightarrow Y\in e$. Suppose that $U$ is a neighborhood of
$p_{\pi}(e)\in Y'$. We will define the distinguished  irreducible component $\mbox{DC}_e(U)$ of $U$. Let 
$F_1, F_2,\ldots, F_s$ be the distinct irreducible components of $U$. Let 
$\pi':U'\rightarrow U$ be a global
blowup of a nowhere dense closed algebraic set, which separates out the irreducible components of $U$ into distinct connected components $Z_1,\ldots, Z_s$ such that $\pi'(Z_i)\subset F_i$ for all $i$, and $Z_i\rightarrow Y_i$ is strict (such as a resolution of singularities of $U$). Then $\pi'\pi\in e$ by Corollary 2.11.4 \cite{H}. There exists a unique component $Z_i$ of $U'$ such that $p_{\pi\pi'}(e)\in Z_i$. Define $\mbox{DC}_e(U)=F_i$.
This  is well defined, since if $\pi'':U''\rightarrow U$ is another global blowup of a nowhere dense closed analytic subset of $U$ which separates the components of $U$, then by 2) of Definition \ref{EtoileDef},
 there exists $\lambda:W\rightarrow Y\in e$ and maps $\alpha\in\mbox{Hom}(\lambda,\pi')$, 
$\beta\in\mbox{Hom}(\lambda,\pi'')$ such that $\alpha(p_{\lambda}(e))=p_{\pi\pi'}(e)$ and $\beta(p_{\lambda}(e))=p_{\pi\pi''}(e)$. Since
$\pi'$ and $\pi''$ are blow ups of nowhere dense closed analytic sets, there is an open subset of $U'$ which intersects all components of $U'$ non trivially
which is isomorphic to an open subset of $U''$ which intersects all components of $U''$ non trivially. Thus the component of $U''$ which 
contains $p_{\pi''}(e)$ must map to $\mbox{DC}_e(U)$.

\begin{Lemma}\label{Lemma1} Suppose that $Y=Y_0$ is a reduced complex analytic space, $e\in \mathcal E_Y$ and $\pi:Y'\rightarrow Y\in e$. Suppose that
$\pi$ has a factorization $\pi=\pi_0\pi_{1}\cdots \pi_r$ where $\pi_i:Y_{i+1}\rightarrow Y_{i}$ are local blow ups determined by the data
$(U_i,E_i,\pi_i)$. Then $\pi_{0}\cdots \pi_i\in e$, $p_{\pi_0\cdots\pi_{i-1}}(e)\in U_{i}$ and $\mbox{DC}_e(U_i)\not\subset E_i$ for all $i$.
\end{Lemma}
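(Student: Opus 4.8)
The plan is to induct on $i$, establishing the three assertions in the order (1), (2), (3) at each stage. Write $\sigma_i=\pi_0\pi_1\cdots\pi_i\colon Y_{i+1}\to Y$ for $0\le i\le r$ and $\sigma_{-1}=\mathrm{id}_Y$, so that $\sigma_r=\pi$ and $\sigma_i=\sigma_{i-1}\circ\pi_i$. I will use two facts about $e$. First, the \emph{downward closure} of $e$: any object of $\mathcal E(Y)$ dominated by a member of $e$ already belongs to $e$. Second, the compatibility of the distinguished points with projection along a domination, that is, the $Y$-morphism of a domination carries the distinguished point of the larger space to that of the smaller (equation (\ref{eq3})); recall also that $p_\psi(e)$ is defined for \emph{every} object $\psi$ of $\mathcal E(Y)$.

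For (1), note that $\pi=\sigma_i\circ(\pi_{i+1}\cdots\pi_r)\in e$ exhibits $\sigma_i$ as dominated by $\pi$, so downward closure yields $\sigma_i\in e$ (and $\sigma_{-1}=\mathrm{id}_Y\in e$). To prove downward closure I would take $e'$ to be the full subcategory of $\mathcal E(Y)$ on those objects dominated by some member of $e$, and check conditions 1)--3) of Definition \ref{EtoileDef} for $e'$. Condition 1) is inherited since a dominating morphism has nonempty source; for 2), given objects dominated by $\pi_1,\pi_2\in e$, a common dominator $\pi_3\in e$ furnished by 2) for $e$ dominates both and lies in $e\subseteq e'$; for 3), the relatively compact image produced by 3) for the dominating member of $e$ stays relatively compact after composing with the continuous $Y$-morphism of the domination. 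Since $e\subseteq e'$ and $e'$ satisfies 1)--3), maximality (condition 4)) forces $e'=e$.

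For (2), apply the projection compatibility to the factorization $\pi=\sigma_{i-1}\circ(\pi_i\cdots\pi_r)$: the distinguished point $p_\pi(e)$ maps to $p_{\sigma_{i-1}}(e)$, and since $\pi_i\cdots\pi_r=\pi_i\circ(\pi_{i+1}\cdots\pi_r)$, the point $p_{\sigma_{i-1}}(e)$ lies in the image of the local blow up $\pi_i$, which is contained in $U_i$; hence $p_{\sigma_{i-1}}(e)\in U_i$. With $\sigma_{i-1}\in e$ this makes $\mbox{DC}_e(U_i)$ well defined, and unwinding its definition shows the distinguished point lies on it: if $\pi'\colon U_i^\ast\to U_i$ separates the components and $Z_{k_0}$ is the piece containing $p_{\sigma_{i-1}\pi'}(e)$, then projecting by $\pi'$ gives $p_{\sigma_{i-1}}(e)\in\pi'(Z_{k_0})\subseteq\mbox{DC}_e(U_i)$.

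Assertion (3) is where I expect the real work. Set $F=\mbox{DC}_e(U_i)$ and $q=p_{\sigma_i}(e)$, so that $\pi_i(q)=p_{\sigma_{i-1}}(e)=:p\in F$, and let $G=\mbox{DC}_e(V)$ be the distinguished component of a small neighborhood $V$ of $q$ in $Y_{i+1}$, a genuine irreducible component of the reduced space $Y_{i+1}$ through $q$. The exceptional locus $\pi_i^{-1}(E_i)$ is the effective Cartier divisor defined by the invertible ideal $\mathcal I_{E_i}\mathcal O_{Y_{i+1}}$; being locally principal on a reduced space and generated by a nonzerodivisor, it contains no irreducible component of $Y_{i+1}$, so $G\not\subseteq\pi_i^{-1}(E_i)$ and $\pi_i(G)$ is dense in a component $F'$ of $U_i$ with $F'\not\subseteq E_i$. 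It remains to identify $F'=F$, i.e.\ that $\pi_i$ carries the distinguished component of $V$ into $F$; I would prove this by the common--refinement device used for the well-definedness of $\mbox{DC}_e$: separate the components of $U_i$ and of $V$, take a common dominator $\lambda\in e$ with maps $a,b$ to the two separations, use uniqueness of $Y$-morphisms to equate the two induced maps $Y_\lambda\to Y_i$, and exploit that the separated pieces are \emph{distinct connected components}, so that the component of $Y_\lambda$ through $p_\lambda(e)$ is sent into the distinguished piece on each side. The genuine obstacle hidden in this last step is that $\lambda$ must not contract the distinguished component to something lower-dimensional, so that its image is actually dense in $F'=F$; this non-contraction of the distinguished component along the morphisms of $e$ is the technical heart of the argument, and once it is secured we obtain $F=F'\not\subseteq E_i$, completing the induction.
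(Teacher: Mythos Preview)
Your approach to (1) via ``downward closure'' is valid and arguably cleaner than the paper's: you enlarge $e$ to the full subcategory of objects dominated by members of $e$, verify axioms 1)--3), and invoke maximality. The paper instead verifies, for each $\sigma_i$, the membership criterion of Lemma~2.10 of \cite{H} (existence of $\phi_\beta\in e$ dominating a given $\phi_\alpha$ with the join having nonempty relatively compact image in $Y_{i+1}$), using that $\pi$ itself already factors through $Y_{i+1}$. Both routes work; yours avoids the external reference.

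For (3), your outline is essentially the paper's argument---take a separation $\lambda:Z\to U_i$ and a common dominator $\tau:W\to Y\in e$ of $h\lambda$ and $h\pi_i$, then chase the component $H$ of $W$ through $p_\tau(e)$---but you stop exactly at the step the paper asserts without proof, namely that $\lambda\beta(H)$ is dense in $\mbox{DC}_e(U_i)$. You correctly isolate this ``non-contraction'' as the crux and then write ``once it is secured''; that is the gap. The missing lemma is: \emph{for any $\sigma:W\to Y\in\mathcal E(Y)$ with $Y$ reduced, every irreducible component $H$ of $W$ contains a dense open subset on which $\sigma$ is an open immersion into some irreducible component of $Y$}. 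This follows by induction on the length of $\sigma$ from the very fact you already used, that the exceptional locus of a blow up is a Cartier divisor and hence contains no component (so $H\setminus\pi^{-1}(E)$ is dense in $H$ and maps isomorphically onto a component of $U\setminus E$, which is $F_j\setminus E$ for a unique component $F_j\not\subset E$). Granting this lemma, both $\tau|_H$ and $(h\lambda)|_{Z_{k_0}}$ are open immersions on dense opens into the \emph{same} component $C$ of $Y$ (same because $\tau(H)\subset h\lambda(Z_{k_0})$ forces it); since $C$ is irreducible the two open images meet, and over that intersection $\beta|_H=(h\lambda)^{-1}\circ\tau$ is an isomorphism onto a nonempty open of $Z_{k_0}$. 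Hence $\beta(H)$ is dense in $Z_{k_0}$, and the rest of your argument (and the paper's) goes through.
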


\begin{proof} We will first show that $\pi_{0}\cdots \pi_i\in e$ for all $i$.  We will use the criterion of Lemma 2.10 on page 431 of \cite{H}.
Suppose that $\phi_{\alpha}:Z_{\alpha}\rightarrow Y\in e$. We must show that there exists $\phi_{\beta}:Z_{\beta}\rightarrow Y\in e$ such that 
$\mbox{Hom}(\phi_{\beta},\phi_{\alpha})\ne \emptyset$, and if $J(\phi_{\beta},\pi_0\cdots\pi_i):Z_J\rightarrow Y$ is the join, then the natural image of 
$Z_J$ in $Y_{i+1}$ is relatively compact and non empty.

By 2) and 3) of Definition \ref{EtoileDef}, there exists $\phi_{\beta}:Z_{\beta}\rightarrow Y\in e$ such that
$\mbox{Hom}(Z_{\beta},Z_{\alpha})\ne \emptyset$, $\mbox{Hom}(Z_{\beta},Y')\ne \emptyset$ and if $q:Z_{\beta}\rightarrow Y'$ is the induced map,
then $q(Z_{\beta})$ is relatively compact in $Y'$. Let $J(\phi_{\beta},\pi_0\cdots\pi_i):Z_J\rightarrow Y$ be the join.

Then $Z_J=Z_{\beta}$ since $\pi_{\beta}$ factors through $\pi_0\cdots \pi_i$.
Since the image of $Z_{\beta}$ is relatively compact in $Y'$, the image of $Z_{\beta}$ in $Y_{i+1}$ is also relatively compact. 
The fact that $p_{\pi_0\cdots \pi_i}(e)\in U_{i+1}$ for all $i$ now follows from (\ref{eq3}).

Let $h=\pi_{0}\cdots \pi_{i-1}$. Let $\lambda:Z\rightarrow U_i$ be a global blow up which separates the irreducible components of $U_i$. Then 
$h\lambda\in e$. Since $h\pi_i\in e$, there exists (by 2) of Definition \ref{EtoileDef}) $\tau:W\rightarrow Y\in e$ with factorizations 
$$
\begin{array}{ccc}
&W&\\
\alpha\swarrow&&\searrow\beta\\
Y_{i+1}&&Z\\
\pi_i\searrow&&\swarrow\lambda\\
&U_i&\\
&\downarrow h&\\
&Y&
\end{array}
$$
Let $H$ be the irreducible component of $W$ which contains $p_{\tau}(e)$. Then $\lambda\beta(H)$ must be dense in $DC_e(U_i)$. Thus
$Y_{i+1}$ contains an irreducible component $G$ such that $\pi_i(G)$ is dense in $\mbox{DC}_e(U_i)$, so that $\mbox{DC}_e(U_i)\not\subset E_i$.
\end{proof}

\begin{Lemma}\label{Lemma2} Suppose that $Y$ is a reduced complex analytic space, $e\in \mathcal E_Y$, $\pi_0:Y_0\rightarrow Y\in e$, and  $(U,E,h)$ is a local blow up of $Y_0$. Then
$\pi_0h\in e$ if and only if  $p_{\pi_0}(e)\in U$ and $\mbox{DC}_e(U)\not\subset E$.
\end{Lemma}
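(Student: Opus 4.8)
The plan is to prove the two implications separately, extracting the forward direction directly from Lemma \ref{Lemma1} and reserving the real work for the converse, which I will establish through the membership criterion of Lemma 2.10 of \cite{H} that was already used in the proof of Lemma \ref{Lemma1}. For the forward direction, suppose $\pi_0h\in e$. Since $\pi_0\in\mathcal E(Y)$ it is itself a sequence of local blow ups, say $\pi_0=\rho_0\cdots\rho_{r-1}$ with $\rho_j$ determined by data $(V_j,F_j,\rho_j)$; appending the local blow up $(U,E,h)$ exhibits $\pi_0h=\rho_0\cdots\rho_{r-1}h$ as a sequence of local blow ups of $Y$ whose final stage is $h$, with center $E$ on $U$. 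Applying Lemma \ref{Lemma1} to this factorization, the conclusion at the last stage reads $p_{\rho_0\cdots\rho_{r-1}}(e)=p_{\pi_0}(e)\in U$ and $\mbox{DC}_e(U)\not\subset E$, which is exactly the assertion.

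For the converse, assume $p_{\pi_0}(e)\in U$ and $\mbox{DC}_e(U)\not\subset E$, and write $h:B\to Y_0$ for the local blow up, factoring as $B=B(\mathcal I_E\mathcal O_U)\to U\hookrightarrow Y_0$. I verify the criterion of Lemma 2.10 of \cite{H} for the sequence of local blow ups $\pi_0h$. Given any $\phi_{\alpha}:Z_{\alpha}\rightarrow Y\in e$, I invoke conditions 2) and 3) of Definition \ref{EtoileDef} to produce $\phi_{\beta}:Z_{\beta}\rightarrow Y\in e$ dominating both $\phi_{\alpha}$ and $\pi_0$, say via $q\in\mbox{Hom}(\phi_{\beta},\pi_0)$, $q:Z_{\beta}\to Y_0$, with $q(Z_{\beta})$ relatively compact in $Y_0$. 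Note that, unlike in Lemma \ref{Lemma1}, I cannot use property 3) to dominate $B$ itself, since $\pi_0h$ is not yet known to lie in $e$; I can only dominate $\pi_0$. Hence the join is genuinely nontrivial, and by the inductive construction of Proposition 2.9 of \cite{H} recalled above, since both $\phi_{\beta}=\pi_0q$ and $\pi_0h$ share the prefix $\pi_0$ and $h$ contributes a single additional center $E$ on $U$, it is the blow up on $Z_{\beta}$ of the pulled back center,
$$ J(\phi_{\beta},\pi_0h):Z_J=B(\mathcal I_E\mathcal O_{Z_{\beta}}|q^{-1}(U))\to Y, $$
carrying natural maps to $Z_{\beta}$ and to $B$.

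It then remains to check the two required properties of the image of $Z_J$ in $B$. Relative compactness is formal: the composite $Z_J\to B\to U$ factors the map $Z_J\to Y_0$, whose image lies in $q(Z_{\beta})$; since this is relatively compact and the blow up $B\to U$ is proper, the image of $Z_J$ in $B$ lies in the preimage of a relatively compact set and is therefore relatively compact. Non-emptiness is where both hypotheses enter. Since $q(p_{\phi_{\beta}}(e))=p_{\pi_0}(e)$ by (\ref{eq3}) and $p_{\pi_0}(e)\in U$, the \'etoile point $p_{\phi_{\beta}}(e)$ lies in $q^{-1}(U)$, and it sits on the distinguished component $D$ of $Z_{\beta}$, which maps densely onto $\mbox{DC}_e(U)$. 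The assumption $\mbox{DC}_e(U)\not\subset E$ forces $\mathcal I_E$ to be nonzero on $\mbox{DC}_e(U)$, hence $\mathcal I_E\mathcal O_{Z_{\beta}}$ to be nonzero on $D$; thus the stalk of $\mathcal I_E\mathcal O_{Z_{\beta}}$ at $p_{\phi_{\beta}}(e)$ is nonzero, the fiber of $Z_J\to q^{-1}(U)$ over $p_{\phi_{\beta}}(e)$ is non-empty, and its image in $B$ is non-empty, lying over $p_{\pi_0}(e)$. With the criterion verified for every $\phi_{\alpha}$, maximality (condition 4) of Definition \ref{EtoileDef}) forces $\pi_0h\in e$.

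I expect the main obstacle to be the precise identification of the join $J(\phi_{\beta},\pi_0h)$ with the single blow up $B(\mathcal I_E\mathcal O_{Z_{\beta}}|q^{-1}(U))$: one must check, padding the two sequences to a common length and running the inductive join of Proposition 2.9 of \cite{H}, that the shared prefix $\pi_0$ contributes the space $Z_{\beta}$ at the penultimate stage and that only the final center $E$ is blown up on $q^{-1}(U)$. Once this normal form is in hand, relative compactness is purely topological and non-emptiness reduces, through the density of $D\to\mbox{DC}_e(U)$, to the single inequality $\mbox{DC}_e(U)\not\subset E$.
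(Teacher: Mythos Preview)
Your strategy coincides with the paper's: both directions are handled as you indicate, the forward one by Lemma~\ref{Lemma1} and the converse by verifying Hironaka's criterion (Lemma~2.10 of \cite{H}) after producing $\phi_\beta$ dominating both $\phi_\alpha$ and $\pi_0$ with relatively compact image. Relative compactness is argued the same way. Two points, however, deserve comment.

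First, your identification of $J(\phi_\beta,\pi_0 h)$ with $B(\mathcal I_E\mathcal O_{Z_\beta}\,|\,q^{-1}(U))$ via a ``shared prefix $\pi_0$'' is not quite licit: the join of Proposition~2.9 of \cite{H} is built from \emph{chosen} factorizations into local blow ups, and the factorization of $\phi_\beta$ need not begin with that of $\pi_0$; the map $q$ is only a $Y$-morphism, not a priori a sequence of local blow ups of $Y_0$. The paper sidesteps this by invoking the universal property of the join to identify $J(\pi_\beta,\pi_0 h)$ with $J(\lambda,h)$ (your $q=\lambda$), which is cleaner than padding and tracing the inductive construction. You flag this yourself as the main obstacle, so this is more a warning than a correction.

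Second, and more substantively, your non-emptiness argument rests on the assertion that $D=\mbox{DC}_e(Z_\beta)$ ``maps densely onto $\mbox{DC}_e(U)$''. You give no justification, and density is essential: from $\mbox{DC}_e(U)\not\subset E$ you cannot conclude $q(D)\not\subset E$ unless you know $q(D)$ is Zariski-dense in $\mbox{DC}_e(U)$. This is exactly where the paper uses that $\pi_\beta$ is \emph{strict} (Proposition~1.7 of \cite{H}): strictness provides an open subset $W\subset Z_\beta$ meeting $D$ on which $\pi_\beta$, hence $q$, is an open immersion, so that $q(W\cap D)$ is a nonempty open subset of $\mbox{DC}_e(Y_0)$. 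From there the paper exhibits an explicit nonempty open piece of $Z_\beta$ lying over $\mbox{DC}_e(Y_0)\setminus E$ and uses the universal property of the join to conclude $Y_\beta'\ne\emptyset$. Your stalk argument at $p_{\phi_\beta}(e)$ would work once density is established by this strictness argument, but as written it has a gap.
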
  

\begin{proof}  The conditions $p_{\pi_0}(e)\in U$ and $\mbox{DC}_e(U)\not\subset E$ are certainly necessary for $\pi_0h$ to be in $e$ (by Lemma \ref{Lemma1}).

Suppose that $p_{\pi_0}(e)\in U$ and $\mbox{DC}_e(U)\not\subset E$. 
We will verify the criterion of Lemma 2.10 on page 431 of \cite{H}.
Suppose that $\pi_{\alpha}:Y_{\alpha}\rightarrow Y\in e$. Let our map $h$ be $h:Y'\rightarrow Y_0$. We must show that there exists $\pi_{\beta}:Y_{\beta}\rightarrow Y\in e$ such that 
$\mbox{Hom}(\pi_{\beta},\pi_{\alpha})\ne \emptyset$, and if $J(\pi_{\beta},\pi_0h):Y_J\rightarrow Y$ is the join, then the natural image of 
$Y_J$ in $Y'$ is relatively compact and non empty.

We have that $U\rightarrow Y$ is in $e$ (by Corollary 2.11.4 \cite{H}), so 
we can replace $Y_0$ with $U$, and assume that $U=Y_0$, and $E$ is closed in $Y_0$ with $\mbox{DC}_e(Y_0)\not\subset E$.
By 2) and 3) of Definition \ref{EtoileDef},
there exists $\pi_{\beta}:Y_{\beta}\rightarrow Y\in e$ and maps $\lambda\in \mbox{Hom}(\pi_{\beta},\pi_0)$,   $\tau\in \mbox{Hom}(\pi_{\beta},\pi_{\alpha})$  such that    $\lambda(Y_{\beta})$ is relatively compact in $Y_{0}$. By the universal property, we have that the joins $J(\pi_{\beta},\pi_0h)$ and $J(\lambda,h)$ are isomorphic, which we will denote by 
$Y_{\beta}'$.  We have a commutative diagram:
$$
\begin{array}{ccccc}
&&Y_{\beta}'&&\\
&\gamma\swarrow&\downarrow&\searrow\delta&\\
Y_{\beta}&\stackrel{\lambda}{\rightarrow}&Y_0&\stackrel{h}{\leftarrow}&Y'\\
\tau\downarrow&\pi_{\beta}\searrow&\pi_0\downarrow&\swarrow h\pi_0&\\
Y_{\alpha}&\stackrel{\pi_{\alpha}}{\rightarrow}&Y&&
\end{array}
$$
Let $K$ be the closure of $\lambda(Y_{\beta})$ in $Y_{0}$, which is compact. $\delta(Y_{\beta}')\subset h^{-1}(\lambda(K))$, which is
compact since $h$ is a global blow up, so it is proper. Thus $\delta(Y_{\beta}')$ is relatively compact.

 It remains to show that
$Y_{\beta}'\ne\emptyset$. We have that $Y_{\beta}\ne\emptyset$ (since $\pi_{\beta}\in e$). The map $\pi_{\beta}$ is strict, by Proposition 1.7 \cite{H}, so it
is an open immersion on an open subset $W$ of $Y_{\beta}$ which intersects $\mbox{DC}_e(Y_{\beta})$ nontrivially. $\lambda$ is thus necessarily also an open immersion on $W$.
Thus $V=\lambda(W)$ is an open subset of $Y_0$ such that $\mbox{DC}_e(Y_0)\cap V\ne\emptyset$. 
By our assumption on $E$, we have that $E\cap\mbox{DC}_e(Y_0)\cap V$ is nowhere dense in $\mbox{DC}_e(Y_0)\cap V$.
Let $F_1,\ldots,F_r$ be the irreducible components of $Y_0$, with $F_1=\mbox{DC}_e(Y_0)$. $h$ is an isomorphism over the non trivial open set
$V\setminus(E\cup F_2\cup \cdots \cup F_r)$. Let $Z= Y_{\beta}|\lambda^{-1}(V\setminus(E\cup F_2\cup \cdots \cup F_r))$.
Let $\epsilon:Z\rightarrow  Y'$ be the morphism induced by $\lambda$ and $i:Z\rightarrow Y_{\beta}$ be the inclusion. Now $Z\ne\emptyset$,
and since $\lambda i=h\epsilon$, we have that $\mbox{Hom}(\epsilon,\delta)\ne \emptyset$ be the universal property of the join.  Thus $Y_{\beta}'\ne \emptyset$.
\end{proof}

Using resolution of singularities, and resolution of indeterminancy (\cite{H1}, \cite{H2}, \cite{AHV},\cite{BM}) we deduce the following Lemma.

\begin{Lemma}\label{Lemma3} Suppose that $Y$ is a reduced complex analytic space and $e\in \mathcal E_Y$. Suppose that $\pi\in e$ factors as a sequence of local blow ups 
$$
Y_n\rightarrow Y_{n-1}\rightarrow \cdots\rightarrow Y_1\rightarrow Y
$$
where each $\pi_i:Y_{i+1}\rightarrow Y_i$ is a local blow up $(U_i,E_i,\pi_i)$. Then there exists $\pi'\in e$ which is a composition of local blow ups 
$$
Y'_n\rightarrow Y'_{n-1}\rightarrow \cdots\rightarrow Y'_1\rightarrow Y
$$
such that  each $Y_i'$ is nonsingular, $\pi_i':Y'_{i+1}\rightarrow Y'_i$ is a local blow up $(U'_i,E'_i,\pi'_i)$ (which is a sequence of blowups with nonsingular centers over $U_i'$), and there exists a commutative diagram of
strict morphisms
$$
\begin{array}{lllllllll}
Y_n'&\rightarrow & Y_{n-1}'&\rightarrow &\cdots &\rightarrow &Y_1'&&\\
\downarrow&&\downarrow&&&&\downarrow&\searrow&\\
Y_n&\rightarrow & Y_{n-1}&\rightarrow &\cdots &\rightarrow &Y_1&\rightarrow &Y.\\
\end{array}
$$
\end{Lemma}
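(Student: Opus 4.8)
The plan is to prove the lemma by induction on the length $n$, building the nonsingular tower $Y_\bullet'$ from the bottom upward, deriving each vertical morphism $g_i\colon Y_i'\to Y_i$ from the universal property of blowing up, and using Lemma \ref{Lemma2} at every stage to guarantee that the spaces we produce remain in the \'etoile $e$. Two preliminary reductions organize the argument. First, since resolving a singular center forces auxiliary blow ups, the new tower will in general be strictly longer than the given one; I would reconcile the lengths exactly as in the join construction recalled above, by padding the original tower with identity local blow ups at the appropriate level $Y_i$, and rename the common length $n$ at the end. Second, I would absorb an initial resolution of singularities of $Y$ into the bottom of the new tower: a resolution $\hat Y\to Y$ is a sequence of local blow ups whose centers lie in the (thin) singular locus, so by Lemma \ref{Lemma2} it lies in $e$, and from that point on the ambient spaces are nonsingular. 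After these reductions I may assume the ambient base is nonsingular, and the task becomes: replace each blow up of a possibly singular center in a nonsingular space by a string of blow ups of nonsingular centers in nonsingular spaces, dominating the original and commuting with it.

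For the inductive step, assume $g_i\colon Y_i'\to Y_i$ has been constructed with $Y_i'$ nonsingular and the composite $Y_i'\to Y$ in $e$, and consider the next original local blow up $\pi_i\colon Y_{i+1}\to Y_i$ with data $(U_i,E_i,\pi_i)$. I would first restrict to $g_i^{-1}(U_i)$, a neighbourhood of the distinguished point $p(e)$, which is legitimate because the corresponding open immersion lies in $e$. The map $g_i$ together with the bimeromorphic $\pi_i$ induces a meromorphic map $Y_i'\dashrightarrow Y_{i+1}$, and by resolution of indeterminacy (equivalently, by principalizing the ideal $\mathcal I_{E_i}\mathcal O_{Y_i'}$) I can find a finite sequence of blow ups of nonsingular centers over $Y_i'$ after which this map becomes a genuine morphism $g_{i+1}\colon Y_{i+1}'\to Y_{i+1}$, by the universal property of the blow up $Y_{i+1}=B(E_i)$. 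Because the ambient is nonsingular and every center is nonsingular, all the intermediate spaces are again nonsingular; these intermediate spaces are the ``extra'' terms of the enlarged tower, their last term is the desired $Y_{i+1}'$, and commutativity with $\pi_i$ holds by construction. Strictness of every arrow is automatic, since a sequence of local blow ups is strict.

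The step that genuinely uses the \'etoile hypotheses, and which I expect to be the main obstacle, is verifying that each of these auxiliary blow ups stays in $e$. Here I would invoke Lemma \ref{Lemma2}: appending a local blow up $(U,E,h)$ keeps the tower in $e$ precisely when $p(e)\in U$ and $\mbox{DC}_e(U)\not\subset E$. The first condition I arrange by always taking $U$ to be a neighbourhood of the relevant distinguished point, using property 3) of Definition \ref{EtoileDef} and the compatibility (\ref{eq3}) of distinguished points under domination. The second condition is the crux: every center arising from resolution of singularities or from resolution of indeterminacy is contained in a singular locus, respectively an indeterminacy locus, each of which is a nowhere dense (thin) analytic subset and therefore cannot contain the full irreducible component $\mbox{DC}_e(U)$. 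Hence $\mbox{DC}_e(U)\not\subset E$ and Lemma \ref{Lemma2} applies at every step; iterating, the entire enlarged tower lies in $e$. The delicate bookkeeping is to keep this distinguished-component condition valid after each blow up, since the distinguished component itself is tracked through the tower via Lemma \ref{Lemma1}.

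Finally I would assemble the pieces. Running the inductive step for $i=0,\dots,n-1$ (on top of the initial resolution of $Y$) produces a tower $Y_m'\to\cdots\to Y_1'\to Y$ of local blow ups of nonsingular centers in nonsingular spaces, equipped with strict vertical morphisms $g_i$ to the identity-padded original tower, whose total composite $\pi'$ lies in $e$ by the previous paragraph. Renaming $m$ as $n$ and recording the resulting commutative ladder of strict morphisms gives exactly the asserted diagram. The only genuinely nontrivial points are the maintenance of the distinguished-component condition throughout and the verification, via resolution of indeterminacy, that the induced meromorphic maps become morphisms at the claimed stages; both reduce to the thinness of every resolution center together with Lemmas \ref{Lemma1} and \ref{Lemma2}.
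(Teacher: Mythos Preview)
Your proposal is correct and follows essentially the same approach as the paper, which simply states that the lemma follows from resolution of singularities and resolution of indeterminacy (\cite{H1}, \cite{H2}, \cite{AHV}) without further detail. Your write-up supplies exactly the argument the paper leaves implicit: inductively resolve the base, principalize the pulled-back center ideal by blow ups of nonsingular centers, invoke the universal property of blowing up to produce the vertical maps, and use Lemmas \ref{Lemma1} and \ref{Lemma2} together with thinness of the centers to keep every step in $e$.
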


Suppose that $\phi:X\rightarrow Y$ is a morphism of complex analytic spaces, and $\pi:Y'\rightarrow Y\in \mathcal E(Y)$.
$\phi^{-1}[\pi]:\phi^{-1}[Y']\rightarrow X$ will denote the strict transform of $\phi$ by $\pi$ (Section 2 of \cite{HLT}).

In the case of a single local blowup $(U,E,\pi)$ of $Y$, $\phi^{-1}[Y']$ is the blow up $B(\mathcal I_E\mathcal O_{X}|\phi^{-1}(U))$.
In the case when $\pi=\pi_0\pi_{1}\cdots \pi_r$ with $\pi_i:Y_{i+1}\rightarrow Y_{i}$ given by local blow ups $(U_i,E_i,\pi_i)$, we inductively define
$\phi^{-1}[\pi]$. Assume that $\pi^{-1}[\pi_{0}\cdots\pi_{r-1}]$ has been constructed. Let $h=\pi_{0}\cdots\pi_{r-1}$, so that $\pi=h\pi_r$. 
Let $\phi':\phi^{-1}[Y_{r}]\rightarrow Y_{r}$ be the natural morphism. Then define $\phi^{-1}[Y_{r+1}]$ to be the blow up 
$B(\mathcal I_{E_r}\mathcal O_{\phi^{-1}[Y_{r}]}|(\phi')^{-1}(U_r))$.

\begin{Lemma}\label{joinstrict} Suppose that $\pi_1,\pi_2\in\mathcal E(Y)$. Then
$$
J(\phi^{-1}[\pi_1],\phi^{-1}[\pi_2])=\phi^{-1}[J(\pi_1,\pi_2)].
$$
\end{Lemma}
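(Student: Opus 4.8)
The plan is to induct on the common length of the two sequences, which we may take to be equal by padding with identity maps as in the construction of $J(\pi_1,\pi_2)$. The two facts that drive the argument are that the strict transform of a single local blow up is, by definition, the blow up of the pulled back ideal, and that the pullback of ideal sheaves is multiplicative. For the base case, let $\pi_1=(U_1,E_1,\pi_1)$ and $\pi_2=(U_2,E_2,\pi_2)$ be single local blow ups of $Y$. Then $J(\pi_1,\pi_2)=B(\mathcal I_{E_1}\mathcal I_{E_2}\mathcal O_Y|U_1\cap U_2)$, so $\phi^{-1}[J(\pi_1,\pi_2)]=B((\mathcal I_{E_1}\mathcal I_{E_2})\mathcal O_X|\phi^{-1}(U_1\cap U_2))$ by the definition of the strict transform of a single local blow up. On the other hand $\phi^{-1}[\pi_i]=B(\mathcal I_{E_i}\mathcal O_X|\phi^{-1}(U_i))$ are single local blow ups of $X$, so $J(\phi^{-1}[\pi_1],\phi^{-1}[\pi_2])=B((\mathcal I_{E_1}\mathcal O_X)(\mathcal I_{E_2}\mathcal O_X)|\phi^{-1}(U_1)\cap\phi^{-1}(U_2))$. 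These agree because $(\mathcal I_{E_1}\mathcal I_{E_2})\mathcal O_X=(\mathcal I_{E_1}\mathcal O_X)(\mathcal I_{E_2}\mathcal O_X)$ and $\phi^{-1}(U_1\cap U_2)=\phi^{-1}(U_1)\cap\phi^{-1}(U_2)$.

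For the inductive step I would carry along a strengthened hypothesis: for the length $r$ truncations one has not only $\phi^{-1}[J_r]=J(\phi^{-1}[\alpha_0\cdots\alpha_{r-1}],\phi^{-1}[\alpha_0'\cdots\alpha_{r-1}'])$, but also that the natural morphism $\Phi:\phi^{-1}[Y_{J_r}]\to Y_{J_r}$ is compatible with the projections, in the sense that $\gamma\circ\Phi$ agrees with the composite $\phi^{-1}[Y_{J_r}]\xrightarrow{\gamma^X}\phi^{-1}[Y_r]\to Y_r$ coming from the join on $X$, and symmetrically for $\delta$. Granting this, the last blow up of $J(\pi_1,\pi_2)$ has center $(\gamma^*\mathcal I_{E_r})(\delta^*\mathcal I_{E_r'})$ on $Y_{J_r}$, so its strict transform blows up $\Phi^*\big((\gamma^*\mathcal I_{E_r})(\delta^*\mathcal I_{E_r'})\big)$ on $\phi^{-1}[Y_{J_r}]$. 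The last blow up of $J(\phi^{-1}[\pi_1],\phi^{-1}[\pi_2])$ has center the product of the pulled back $r$-th centers of the two strict transforms, namely $(\gamma^X)^*\big(\mathcal I_{E_r}\mathcal O_{\phi^{-1}[Y_r]}\big)\cdot(\delta^X)^*\big(\mathcal I_{E_r'}\mathcal O_{\phi^{-1}[Y_r']}\big)$. By the projection compatibility and multiplicativity of pullback this equals $\Phi^*\big((\gamma^*\mathcal I_{E_r})(\delta^*\mathcal I_{E_r'})\big)$, and the underlying open sets match for the same reason. Hence the two final blow ups coincide; that the new natural map and projections remain compatible follows from the universal property of blow ups, closing the induction.

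The hard part is exactly this projection bookkeeping. A priori there are two paths $\phi^{-1}[Y_{J_r}]\to Y_r$, one through $Y_{J_r}$ via $\gamma$ and one through $\phi^{-1}[Y_r]$, and the identification of centers hinges on their coinciding, so that the two pullbacks of $\mathcal I_{E_r}$ to the common base are the same. This is why the inductive hypothesis must transport the compatibility of the projections rather than the bare equality of join domains; verifying that this compatibility survives each additional blow up is the technical heart of the argument.
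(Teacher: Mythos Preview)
Your argument is correct and follows the same approach as the paper: both proceed by comparing the explicit constructions of the join and the strict transform, reducing to the base case of two single local blow ups where the equality follows from multiplicativity of ideal pullback and $\phi^{-1}(U_1\cap U_2)=\phi^{-1}(U_1)\cap\phi^{-1}(U_2)$. The paper is terser---it writes out only the base case in detail and leaves the inductive step as an implicit comparison of constructions---whereas you have been more careful in spelling out that the induction must carry along the compatibility of the projection maps $\gamma,\delta$ with their strict-transform counterparts $\gamma^X,\delta^X$, which is indeed what makes the identification of centers work at each stage.
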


\begin{proof} The fact that these two constructions are canonically isomorphic can be realized by comparing the explicits constructions given above.
The essential case is that of the strict transform of the join of two local blow ups $\pi_1:Y_1\rightarrow Y$ and $\pi_2:Y_2\rightarrow Y$ given by local
 data $(U_1,E_1,\pi_1)$ and $(U_2,E_2,\pi_2)$. The join $J(\pi_1,\pi_2)$ is then the blow up 
 $$
 J(\pi_1,\pi_2):B(\mathcal I_{E_1}\mathcal I_{E_2}|U_1\cap U_2)\rightarrow Y,
 $$
 and $\phi^{-1}[J(\pi_1,\pi_2)]$ is the blow up 
 \begin{equation}\label{eq4}
 \phi^{-1}[J(\pi_1,\pi_2)]B(\mathcal I_{E_1}\mathcal I_{E_2}\mathcal O_X|\phi^{-1}(U_1\cap U_2))\rightarrow X.
 \end{equation} 
 However, $\phi_i^{-1}[\pi_i]$ are the blow ups $\phi_i^{-1}[\pi_i]:B(\mathcal I_{E_i}\mathcal O_X|\phi^{-1}(U_i))\rightarrow X$ Thus the construction of
 $J(\phi^{-1}[\pi_1],\phi^{-1}[\pi_2])$ described at the beginning of this section gives us again the blow up (\ref{eq4}).
 \end{proof}

We now introduce a concept which will play a central role in determining when we can push an \'etoile forward  by a morphism.

\begin{Lemma} Suppose that $\phi:X\rightarrow Y$ is a morphism of  complex analytic spaces and $e\in \mathcal E_X$.
Let 
$$
S(\phi,e)=\{\pi\in \mathcal E(Y)\mid \phi^{-1}[\pi]\in e\}.
$$
Then $S(\phi,e)$ satisfies properties 1), 2) and 3) of Definition \ref{EtoileDef}.
\end{Lemma}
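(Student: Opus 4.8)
The plan is to verify properties 1), 2) and 3) of Definition \ref{EtoileDef} one at a time, leaning on the compatibility of strict transforms with joins (Lemma \ref{joinstrict}) for property 2) and on the local-blow-up criterion (Lemma \ref{Lemma2}) for property 3). Throughout I write $\phi':\phi^{-1}[Y']\to Y'$ for the natural morphism that accompanies each strict transform $\phi^{-1}[\pi]:\phi^{-1}[Y']\to X$, fitting into the commutative square over $\phi$ and $\pi$. Property 1) is then immediate: if $\pi:Y'\to Y\in S(\phi,e)$ then $\phi^{-1}[\pi]\in e$, so $\phi^{-1}[Y']\neq\emptyset$ by property 1) of the \'etoile $e$, and since $\phi'$ sends $\phi^{-1}[Y']$ into $Y'$, the target $Y'$ is nonempty.

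For property 2), given $\pi_1,\pi_2\in S(\phi,e)$ I would take $\pi_3=J(\pi_1,\pi_2)$. The join already dominates $\pi_1$ and $\pi_2$ by its construction in Proposition 2.9 of \cite{H}, so the only thing to check is that $\pi_3\in S(\phi,e)$. Here Lemma \ref{joinstrict} does the work, giving $\phi^{-1}[\pi_3]=\phi^{-1}[J(\pi_1,\pi_2)]=J(\phi^{-1}[\pi_1],\phi^{-1}[\pi_2])$; since $\phi^{-1}[\pi_1],\phi^{-1}[\pi_2]\in e$ and $e$ is closed under joins (2.9.2 of \cite{H}), the right-hand side lies in $e$, whence $\phi^{-1}[\pi_3]\in e$ and $\pi_3\in S(\phi,e)$.

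The substance is property 3), and this is where I expect the main obstacle. Property 3) of $e$ only produces a relatively compact dominator \emph{over} $X$, which need not be of the form $\phi^{-1}[\pi_2]$, so I cannot simply pull it back. Instead my plan is to build $\pi_2$ directly on the $Y$-side by shrinking $\pi_1$. Given $\pi_1:Y_1\to Y\in S(\phi,e)$, set $p=p_{\phi^{-1}[\pi_1]}(e)\in\phi^{-1}[Y_1]$, which exists since $\phi^{-1}[\pi_1]\in e$. Using that complex analytic spaces are locally compact, choose a relatively compact open neighborhood $V$ of $\phi'(p)$ in $Y_1$, let $j:V\hookrightarrow Y_1$ be the inclusion viewed as the local blow up $(V,\emptyset,j)$, and put $\pi_2=\pi_1 j\in\mathcal E(Y)$. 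Then $j$ is a $Y$-morphism $V\to Y_1$ whose image $V$ is relatively compact in $Y_1$, which supplies the required dominance and compactness in property 3).

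It remains to show $\phi^{-1}[\pi_2]\in e$, and the key observation is that the strict transform of an open inclusion is just a restriction. Running the inductive definition of $\phi^{-1}[\,\cdot\,]$ on the final local blow up $j$ (with empty center) gives $\phi^{-1}[Y_2]=(\phi')^{-1}(V)$, so that $\phi^{-1}[\pi_2]$ is $\phi^{-1}[\pi_1]$ followed by the open inclusion $(\phi')^{-1}(V)\hookrightarrow\phi^{-1}[Y_1]$; that is, $\phi^{-1}[\pi_2]=\phi^{-1}[\pi_1]\,h$ for the local blow up $h=((\phi')^{-1}(V),\emptyset,\mbox{incl})$ of $\phi^{-1}[Y_1]$. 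By the choice of $V$ we have $p\in(\phi')^{-1}(V)$, and $\mbox{DC}_e((\phi')^{-1}(V))\not\subset\emptyset$ is automatic, so Lemma \ref{Lemma2} applies and yields $\phi^{-1}[\pi_2]\in e$. Hence $\pi_2\in S(\phi,e)$, completing property 3). The one point requiring care — and the crux of the argument — is precisely this reduction of property 3) to appending an open inclusion and then invoking Lemma \ref{Lemma2}, rather than attempting to transport a dominator obtained over $X$ back to $Y$.
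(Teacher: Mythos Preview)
Your argument is correct and, for property 2), it is exactly the paper's proof: the paper's entire proof reads ``This follows from Lemma \ref{joinstrict} and 2.9.2 of \cite{H},'' and your verification of 2) invokes precisely those two ingredients. The paper does not spell out properties 1) and 3) at all, so your detailed treatment of 3) via a relatively compact open inclusion is a legitimate elaboration rather than a different route; the one cosmetic point is that your appeal to Lemma \ref{Lemma2} formally requires $X$ to be reduced (both its hypothesis and the definition of $\mbox{DC}_e$ assume this), whereas the present lemma does not --- for the empty-center local blow up you actually need, Corollary 2.11.4 of \cite{H} (used in the proofs of Lemmas \ref{Lemma1} and \ref{Lemma2}) gives the conclusion directly without any reducedness hypothesis.
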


\begin{proof} This follows from Lemma \ref{joinstrict} and 2.9.2 of \cite{H}.
\end{proof}

\begin{Lemma}\label{Lemma6} Suppose that $\phi:X\rightarrow Y$ is a morphism of reduced complex analytic spaces and $e\in \mathcal E_X$. Suppose that $f\in \mathcal E_Y$ contains
$S(\phi,e)$, and $\pi:Y'\rightarrow Y\in S(\phi,e)$. Then
$$
p_{\pi}(f)=\phi'(p_{\phi^{-1}[\pi]}(e))
$$
and
$$
\phi'(\mbox{DC}_e(\phi^{-1}[Y']))\subset \mbox{DC}_f(Y'),
$$
where $\phi':\phi^{-1}[Y']\rightarrow Y'$ is the induced morphism.
\end{Lemma}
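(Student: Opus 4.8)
The plan is to treat the two assertions in turn, deducing the second from the first. Throughout I write $g=\phi^{-1}[\pi]$, so that $g\in e$, and I let $p=p_{g}(e)$ be its distinguished point in $\phi^{-1}[Y']$; for varying $\pi$ the induced maps $\phi':\phi^{-1}[Y']\to Y'$ fit into commutative squares $\pi\phi'=\phi g$.

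For the first assertion, set $q_{\pi}=\phi'(p_{\phi^{-1}[\pi]}(e))\in Y'$ for each $\pi\in S(\phi,e)$. First I would check that $\{q_{\pi}\}$ is a \emph{compatible} system, i.e. $\theta(q_{\pi_{1}})=q_{\pi_{2}}$ whenever $\theta\in\mbox{Hom}(\pi_{1},\pi_{2})$: the strict transform is functorial, so $\phi^{-1}[\theta]\in\mbox{Hom}(\phi^{-1}[\pi_{1}],\phi^{-1}[\pi_{2}])$, and applying the induced map together with the commutative square to the identity $\phi^{-1}[\theta](p_{\phi^{-1}[\pi_{1}]}(e))=p_{\phi^{-1}[\pi_{2}]}(e)$ (which is (\ref{eq3}) for $e$) gives exactly this. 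Since $S(\phi,e)$ satisfies 1)--3), its distinguished points exist by Lemma 2.3 \cite{H}, and that construction, via property 3), exhibits each such point as the unique member of the nested intersection $\bigcap \overline{\beta(Y_{\beta})}$ (images in $Y'$) over all $\beta\in S(\phi,e)$ factoring through $\pi$; in particular a compatible system on $S(\phi,e)$ is unique. As the restriction of $p_{\bullet}(f)$ to $S(\phi,e)\subseteq f$ is such a system, I conclude $q_{\pi}=p_{\pi}(f)$, which is the asserted $p_{\pi}(f)=\phi'(p_{\phi^{-1}[\pi]}(e))$.

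For the second assertion I would pass to separating blow ups. Choose a resolution $\sigma:Z\to\phi^{-1}[Y']$ separating the components of $\phi^{-1}[Y']$; by Corollary 2.11.4 \cite{H}, $g\sigma\in e$, and $\mbox{DC}_e(\phi^{-1}[Y'])=\sigma(Z_{i_{0}})$ where $Z_{i_{0}}$ is the clopen component with $p_{g\sigma}(e)\in Z_{i_{0}}$; since the components are now disjoint I may restrict $\sigma$ to $Z_{i_{0}}$ and assume $Z$ irreducible with $\sigma(Z)=\mbox{DC}_e(\phi^{-1}[Y'])=:D$. Choose likewise a resolution $\rho:V\to Y'$ separating the components of $Y'$, so $\pi\rho\in f$ and $\mbox{DC}_f(Y')=\rho(V_{j_{0}})$ with $p_{\pi\rho}(f)\in V_{j_{0}}$. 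Resolving the indeterminacy of $\phi'\sigma$ against $\rho$ yields a blow up $\theta:Z'\to Z$ and a morphism $\psi:Z'\to V$ with $\rho\psi=\phi'\sigma\theta$ and $g\sigma\theta\in e$. From $\phi(g\sigma\theta)=\pi\rho\psi$ the universal property of the strict transform factors $g\sigma\theta$ through $\phi^{-1}[\pi\rho]$; since an \'etoile contains every morphism dominated by one of its members (if $a\in e$ factors through $b$ then $b\in e$, verified from the criterion of Lemma 2.10 \cite{H} using 2) and 3)), this forces $\phi^{-1}[\pi\rho]\in e$, i.e. $\pi\rho\in S(\phi,e)$. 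Applying the first assertion to $\pi\rho$, together with (\ref{eq3}) for $e$, gives $\psi(p_{g\sigma\theta}(e))=p_{\pi\rho}(f)$. Now $p_{g\sigma\theta}(e)$ lies in the component of $Z'$ lying over $Z_{i_{0}}$, whose $\psi$-image is irreducible and contains $p_{\pi\rho}(f)\in V_{j_{0}}$; as the $V_{j}$ are disjoint, this image lies in $V_{j_{0}}$, and pushing down by $\rho$ yields $\phi'(D)\subseteq\rho(V_{j_{0}})=\mbox{DC}_f(Y')$.

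The main obstacle is the step $\pi\rho\in S(\phi,e)$: resolving the indeterminacy of $\phi'\sigma$ along $\rho$ keeps $\theta$ a blow up of a thin center (hence in $e$ by Corollary 2.11.4 \cite{H}) precisely when $\phi'(D)$ is not contained in the center of $\rho$, for otherwise the modification needed to lift $\phi'\sigma$ to $V$ fails to be birational. This is where I expect the real work, and where the strictness of $g$ (Proposition 1.7 \cite{H}) together with the running hypothesis $\dim X=\dim\phi(X)$ — which makes $\phi'$ generically finite, so that $\phi'(D)$ is dense in $\mbox{DC}_f(Y')$ and meets $Y'\setminus\mbox{(center of }\rho)$ — must be invoked to keep $\theta$ birational and in $e$. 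Granting this, the component tracking is forced by the first assertion, since the distinguished points match under $\phi'$ and the separated components are disjoint.
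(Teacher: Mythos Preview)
Your argument for the first assertion via uniqueness of compatible systems on $S(\phi,e)$ is correct in spirit, though it is more elaborate than necessary. The paper's proof is a one-line observation: for every open neighborhood $U$ of $q:=\phi'(p_{\phi^{-1}[\pi]}(e))$ in $Y'$, the restriction $\pi|U$ lies in $S(\phi,e)$ (since $\phi^{-1}[\pi|U]$ is just $\phi^{-1}[\pi]$ restricted to $(\phi')^{-1}(U)\ni p_{\phi^{-1}[\pi]}(e)$, which stays in $e$ by Corollary~2.11.4 of \cite{H}). Hence $\pi|U\in f$, so $p_{\pi}(f)\in U$ for every such $U$, forcing $p_{\pi}(f)=q$. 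No compatible-system machinery is needed.

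For the second assertion there is a genuine gap. You correctly isolate the obstacle: to push $\pi\rho$ into $S(\phi,e)$ you need the center of the indeterminacy-resolving blow up $\theta$ to be thin in $Z$, which fails exactly when $\phi'(D)$ lies inside the center of $\rho$. Your proposed remedy, however, is to invoke the hypothesis $\dim X=\dim\phi(X)$ so that $\phi'$ is generically finite and $\phi'(D)$ is dense in $\mbox{DC}_f(Y')$. That hypothesis is \emph{not} part of Lemma~\ref{Lemma6}; the lemma is stated and used (e.g.\ in the proof of Proposition~\ref{Prop1} and Theorem~\ref{Theorempreflat}) for arbitrary morphisms of reduced complex analytic spaces, well before any generic-finiteness assumption enters. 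So this fix is unavailable.

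The paper sidesteps the obstacle by arguing by contradiction, which turns the difficulty into a triviality. Assume $\phi'(D)\not\subset\mbox{DC}_f(Y')$ and choose a closed analytic $E$ which is a \emph{proper} subset of $\mbox{DC}_f(Y')$ such that the blow up $\alpha:Z\to Y'$ along $E$ separates $\mbox{DC}_f(Y')$ from the strict transform $W$ of $\overline{\phi'(D)}$. The point is that $E\subset\mbox{DC}_f(Y')$, so if $(\phi')^{-1}(E)$ contained $D=\mbox{DC}_e(\phi^{-1}[Y'])$ we would have $\phi'(D)\subset E\subset\mbox{DC}_f(Y')$, contradicting the standing assumption. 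Thus $(\phi')^{-1}(E)$ is automatically thin in $D$, Lemma~\ref{Lemma2} gives $\phi^{-1}[\pi\alpha]\in e$, and then the first assertion applied to $\pi\alpha$ places $p_{\pi\alpha}(f)$ in $W$, which is disjoint from $\mbox{DC}_f(Z)$ --- a contradiction. In short: the contradiction hypothesis itself guarantees the thinness you were worried about, so no extra dimensional assumption is needed. Your direct approach can be repaired by recasting it in exactly this contrapositive form and choosing the separating blow up with center inside $\mbox{DC}_f(Y')$.
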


\begin{proof} Suppose that  $U$ is any neighborhood of $\phi'(p_{\phi^{-1}[\pi]}(e))$ in $Y'$.
Then $\pi|U\in S(\phi,e)$ (by Lemma \ref{Lemma2}). Thus 
$$
p_{\pi}(f)=\phi'(p_{\phi^{-1}[\pi]}(e)).
$$

Suppose that $\phi'({\rm DC}_e(\phi^{-1}[Y']))\not\subset{\rm DC}_f(Y')$. Then there exists a global blowup $\beta:Y''\rightarrow Y'$ such that ${\rm DC}_f(Y'')$ is a connected component of $Y''$ and the induced morphism $\pi'':Y''\rightarrow Y\in S(\phi,e)$.
 We  have an induced  commutative diagram of analytic morphisms
$$
\begin{array}{ccc}
\phi^{-1}[Y'']&\stackrel{\phi''}{\rightarrow}&Y''\\
\alpha\downarrow&&\downarrow\beta\\
\phi^{-1}[Y']&\stackrel{\phi'}{\rightarrow}&Y'
\end{array}
$$
where the vertical arrow are global blow ups.  By our construction, $\phi''({\rm DC}_e(\phi^{-1}[Y'']))$ is disjoint from ${\rm DC}_f(Y'')$. But
 $p_{\pi''}(f)=\phi''(p_{\phi^{-1}[\pi'']}(e))$ by the first part of this proof, so we have that $\phi''({\rm DC}_e(X''))\subset {\rm DC}_f(Y'')$, a contradiction. Thus
$\phi'({\rm DC}_e(\phi^{-1}[Y'])\subset{\rm DC}_f(Y')$.

\end{proof}

\begin{Proposition}\label{Prop1} Suppose that $\phi:X\rightarrow Y$ is a morphism of reduced complex analytic spaces. Then
$S(\phi,e)\in \mathcal E_Y$ if and only if for all $\pi:Y'\rightarrow Y\in S(\phi,e)$, with associated morphism $\phi':\phi^{-1}[Y']\rightarrow Y'$, $\phi'(\mbox{DC}_e(\phi^{-1}[Y']))$ is not  
contained in a proper analytic subset of an irreducible component of $Y'$.
\end{Proposition}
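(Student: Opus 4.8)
The plan is to exploit the fact, established in the (unlabelled) Lemma just before Lemma \ref{Lemma6}, that $S=S(\phi,e)$ already satisfies conditions 1), 2), 3) of Definition \ref{EtoileDef}. Consequently $S\in\mathcal E_Y$ is \emph{equivalent} to $S$ being maximal among subcategories of $\mathcal E(Y)$ satisfying 1)--3). Since an increasing union of subcategories satisfying 1)--3) again satisfies 1)--3), Zorn's lemma produces at least one \'etoile $f\in\mathcal E_Y$ with $S\subseteq f$, and $S\in\mathcal E_Y$ holds precisely when every such $f$ equals $S$, i.e. when $f\subseteq S$. Thus both implications reduce to comparing $f$ with $S$ through the dictionary of Lemma \ref{Lemma6}, which transports the distinguished point and distinguished component of $e$ on $\phi^{-1}[Y']$ to those of $f$ on $Y'$, together with the membership criterion of Lemma \ref{Lemma2}.

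For the ``if'' direction I would assume the stated condition and prove $f\subseteq S$ by induction on the length of a factorization $\rho=\rho_0\cdots\rho_r\in f$ into local blow ups $(U_i,E_i,\rho_i)$ with $\rho_i:Y_{i+1}\to Y_i$. Writing $\sigma=\rho_0\cdots\rho_{r-1}$, Lemma \ref{Lemma1} (applied in $f$) gives $\sigma\in f$, $p_\sigma(f)\in U_r$ and $\mbox{DC}_f(U_r)\not\subset E_r$, while the inductive hypothesis gives $\phi^{-1}[\sigma]\in e$. Let $\sigma':\phi^{-1}[Y_r]\to Y_r$ be the induced map. By Lemma \ref{Lemma6}, $p_\sigma(f)=\sigma'(p_{\phi^{-1}[\sigma]}(e))\in U_r$, so $p_{\phi^{-1}[\sigma]}(e)\in(\sigma')^{-1}(U_r)$; and $\sigma'(\mbox{DC}_e(\phi^{-1}[Y_r]))\subset\mbox{DC}_f(Y_r)$. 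Here the hypothesis enters: applied to $\sigma\in S$ it says $\sigma'(\mbox{DC}_e(\phi^{-1}[Y_r]))$ lies in no proper analytic subset of any component of $Y_r$, so with the inclusion into the irreducible set $\mbox{DC}_f(Y_r)$ it is Zariski dense there. Since $\mbox{DC}_f(U_r)\not\subset E_r$ forces the distinguished component $\mbox{DC}_f(Y_r)$ not to be contained in $E_r$, density yields $\sigma'(\mbox{DC}_e(\phi^{-1}[Y_r]))\not\subset E_r$, hence $\mbox{DC}_e(\phi^{-1}[Y_r])\not\subset(\sigma')^{-1}(E_r)$. As $\phi^{-1}[\rho]$ is the local blow up of $\phi^{-1}[Y_r]$ with data $((\sigma')^{-1}(U_r),(\sigma')^{-1}(E_r),\cdot)$, Lemma \ref{Lemma2} then gives $\phi^{-1}[\rho]\in e$, i.e. $\rho\in S$, closing the induction.

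For the ``only if'' direction I would argue by contraposition: assuming the condition fails for some $\pi:Y'\to Y\in S$, I would exhibit a blow up lying in $S=f$ whose strict transform is not in $e$. By Lemma \ref{Lemma6}, $\phi'(\mbox{DC}_e(\phi^{-1}[Y']))\subset\mbox{DC}_f(Y')$, so the failure supplies a closed analytic set $E$ with $\phi'(\mbox{DC}_e(\phi^{-1}[Y']))\subset E\subsetneq\mbox{DC}_f(Y')$ (intersecting the offending component with $\mbox{DC}_f(Y')$ when they differ). Let $\alpha:Z\to Y'$ be the blow up of $E$. Since $\mbox{DC}_f(Y')\not\subset E$, Lemma \ref{Lemma2} gives $\pi\alpha\in f$. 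But $\phi^{-1}[\pi\alpha]$ is the blow up of $\phi^{-1}[Y']$ along $\mathcal I_E\mathcal O$, whose center is $(\phi')^{-1}(E)$, and $\mbox{DC}_e(\phi^{-1}[Y'])\subset(\phi')^{-1}(E)$ because $\phi'(\mbox{DC}_e(\phi^{-1}[Y']))\subset E$; so Lemma \ref{Lemma2} shows $\phi^{-1}[\pi\alpha]\notin e$, i.e. $\pi\alpha\notin S$, contradicting $\pi\alpha\in f=S$.

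The main obstacle is the density step inside the ``if'' direction: one must recognise that ``$\phi'(\mbox{DC}_e(\phi^{-1}[Y']))$ is contained in no proper analytic subset of a component'' is exactly what upgrades the mere inclusion $\phi'(\mbox{DC}_e(\phi^{-1}[Y']))\subset\mbox{DC}_f(Y')$ of Lemma \ref{Lemma6} into Zariski density, thereby letting the non-containment $\mbox{DC}_f\not\subset E_r$ read off in $f$ be pulled back to the non-containment needed to invoke Lemma \ref{Lemma2} for $e$. Some care is also required to check that the distinguished components computed over the neighbourhood $U_r$ and over $Y_r$ are compatible (both are read off at the distinguished point) and that the set-theoretic center of the strict-transform blow up is indeed $(\sigma')^{-1}(E_r)$.
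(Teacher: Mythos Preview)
Your proposal is correct and follows essentially the same route as the paper: both directions hinge on Lemmas \ref{Lemma1}, \ref{Lemma2}, and \ref{Lemma6}, with the ``if'' direction proved by induction on the length of a local-blow-up factorization of an element of $f$, and the ``only if'' direction by blowing up the offending proper analytic set $E$ to produce an element of $f\setminus S$. The only cosmetic difference is that the paper first restricts to the open set $U_r$ (applying the hypothesis to $\alpha=(h_0\cdots h_{r-1})|U_r$) before invoking density, whereas you work over $Y_r$ and then pass to $U_r$; you already flag this bookkeeping issue, and it is indeed harmless once one notes that $\mbox{DC}_e$ and $\mbox{DC}_f$ are compatible with restriction to an open neighbourhood of the distinguished point.
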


\begin{proof} Suppose that $f\in \mathcal E_Y$ contains $S(\phi,e)$ and there exists $\pi:Y'\rightarrow Y\in S(\phi,e)$ such that $\phi'(\mbox{DC}_e(\phi^{-1}[Y']))$ is contained
in a proper analytic subset $E$ of an irreducible component of $Y'$. Let $\alpha:Z\rightarrow Y'$ be the blow up of $E$. Then
$\pi\alpha\in f$ by Lemma \ref{Lemma2}. We have a commutative diagram of morphisms
$$
\begin{array}{ccc}
\phi^{-1}[Z]&\stackrel{\phi''}{\rightarrow}&Z\\
\phi^{-1}[\alpha]\downarrow&&\downarrow\alpha\\
\phi^{-1}[Y']&\stackrel{\phi'}{\rightarrow}&Y'.
\end{array}
$$
$\mbox{DC}_e(\phi^{-1}[Y'])$ is a subspace of $(\phi')^{-1}(E)$ and $\phi^{-1}[\alpha]:\phi^{-1}[Z]\rightarrow \phi^{-1}[Y']$ is the blow up of $(\phi')^{-1}(E)$. Thus $\phi^{-1}[\pi\alpha]=\phi^{-1}[\pi]\phi^{-1}[\alpha]\not\in e$ by Lemma \ref{Lemma2}.

Now suppose that for all $\pi:Y'\rightarrow Y\in S(\phi,e)$, with associated morphism $\phi':\phi^{-1}[Y']\rightarrow Y'$, $\phi'(\mbox{DC}_e(\phi^{-1}[Y']))$ is not  
contained in a proper analytic subset of an irreducible component of $Y'$. Suppose that $f\in \mathcal E_Y$ contains $S(\phi,e)$. 
Suppose that $\pi\in f$.
We will show that $\pi\in S(\phi,e)$. 

We prove this by induction on the length $r$ of a factorization $\pi=h_0h_{1}\cdots h_{r-1}h_r$ where $(U_i,E_i,h_i)$ are
local blow ups $h_i:Y_{i+1}\rightarrow Y_i$. By Lemma \ref{Lemma1}, $h_{0}\cdots h_{r-1}\in f$, $p_{h_{0}\cdots h_{r-1}}(f)\in U_r$ and
$\mbox{DC}_f(U_r)\not\subset E_r$.  

We have a commutative diagram of morphisms
$$
\begin{array}{rcl}
\phi^{-1}[Y']&\stackrel{\phi'}{\rightarrow}&Y'\\
\phi_{r-1}^{-1}[h_r]\downarrow&&\downarrow h_r\\
\phi^{-1}[Y_{r}]&\stackrel{\phi_{r-1}}{\rightarrow}&Y_{r}\\
\phi^{-1}[h_{0}\cdots h_{r-1}]\downarrow&&\downarrow h_{0}\cdots h_{r-1}\\
X&\stackrel{\phi}{\rightarrow}&Y.
\end{array}
$$

By our induction assumption, $h_{0}\cdots h_{r-1}\in S(\phi,e)$, so that $\phi^{-1}[h_{0}\cdots h_{r-1}]\in e$.
We have that
$p_{h_0\cdots h_{r-1}}(f)\in U_r$ by Lemma \ref{Lemma1}. Thus $\alpha=h_{0}\cdots h_{r-1}|U_r\in f$ by Lemma \ref{Lemma2}.
$\phi^{-1}[\alpha]:\phi^{-1}[U_r]\rightarrow X$ is in $e$ by Lemma \ref{Lemma6}, since $p_{\phi^{-1}[h_{0}\cdots h_{r-1}]}(e)\in \phi_{r-1}^{-1}(p_{h_0\cdots h_{r-1}}(f))$. Thus $\alpha\in S(\phi,e)$, so that $\phi_{r-1}(\mbox{DC}_e(\phi_{r-1}^{-1}(U_r)))$ is not contained in a
proper analytic subset of an irreducible component of $U_r$, by assumption. Since $\phi_{r-1}(\mbox{DC}_e(\phi_{r-1})^{-1}(U_r))\subset \mbox{DC}_f(U_r)$ by Lemma \ref{Lemma6}, and $\mbox{DC}_f(U_r)\not\subset E_r$ (by Lemma \ref{Lemma1}), we have that
$$
\phi_{r-1}(\mbox{DC}_e(\phi_{r-1}^{-1}(U_r))\not\subset E_r,
$$
 so $\mbox{DC}_e(\phi_{r-1}^{-1}(U_r))\not\subset \phi_{r-1}^{-1}(E_r\cap U_r)$. Thus 
$\phi^{-1}[\pi]=\phi^{-1}[h_{0}\cdots h_{r-1}]\phi_{r-1}^{-1}[h_r]\in e$ by Lemma \ref{Lemma2}, so that $\pi\in S(\phi,e)$.
\end{proof}

\begin{Theorem}\label{Theorempreflat} Suppose that $\phi:X\rightarrow Y$ is a morphism of reduced complex analytic spaces, and $e\in\mathcal E_X$. Then there exists
$\pi:Y'\rightarrow Y\in S(\phi,e)$ (so that $\phi^{-1}[\pi]\in e$) such that either $\phi':\phi^{-1}[Y']\rightarrow Y'$ is flat at $p_{\phi^{-1}[\pi]}(e)$ or $\phi'(\mbox{DC}_e(\phi^{-1}[Y']))$ is contained in a proper analytic subset of an irreducible component of $Y'$. 
\end{Theorem}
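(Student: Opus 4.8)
The plan is to convert the two alternatives in the conclusion into the dichotomy supplied by Proposition~\ref{Prop1}. If there already exists some $\pi:Y'\rightarrow Y\in S(\phi,e)$ for which $\phi'(\mbox{DC}_e(\phi^{-1}[Y']))$ is contained in a proper analytic subset of an irreducible component of $Y'$, then the second alternative of the theorem holds and we are finished. So I may assume that no such $\pi$ exists, i.e.\ that for every $\pi\in S(\phi,e)$ the image $\phi'(\mbox{DC}_e(\phi^{-1}[Y']))$ is not contained in a proper analytic subset of an irreducible component of $Y'$. This is exactly the standing hypothesis of Proposition~\ref{Prop1}, so $f:=S(\phi,e)$ is itself an \'etoile over $Y$; that is, $f\in\mathcal E_Y$.

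The remaining task is to flatten $\phi$ along the \'etoile $f$. The key geometric input is the local flatification (aplatissement) theorem of \cite{HLT} in its vo\^ute \'etoil\'ee formulation: applied to the morphism $\phi$ and the \'etoile $f\in\mathcal E_Y$, it produces a member $\pi:Y'\rightarrow Y\in f$, a finite composite of local blow ups of nowhere dense centers, whose strict transform $\phi^{-1}[\pi]$ has the property that the induced morphism $\phi':\phi^{-1}[Y']\rightarrow Y'$ is flat over a neighborhood of $p_\pi(f)$. Since $\pi\in f=S(\phi,e)$, the definition of $S(\phi,e)$ gives $\phi^{-1}[\pi]\in e$, and by Lemma~\ref{Lemma6} the distinguished source point satisfies $\phi'(p_{\phi^{-1}[\pi]}(e))=p_\pi(f)$. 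Hence $\phi'$ is flat at $p_{\phi^{-1}[\pi]}(e)$, which is precisely the first alternative, and the theorem follows.

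The main obstacle is this flatification step: one must know that the aplatissement of \cite{HLT} can be carried out inside the prescribed \'etoile $f$, so that the flattening blow ups are themselves members of $f$ and therefore have strict transforms lying in $e$. Geometrically the aplatissement blows up the locus where $\phi'$ fails to be flat, a nowhere dense analytic subset, and the dichotomy of the theorem simply records whether the distinguished component $\mbox{DC}_e(\phi^{-1}[Y'])$ can be kept off these non-flat centers. If it can, the flattening is pushed through within $f$ and we reach flatness at the distinguished point as above; if at some stage the distinguished component is forced into such a center $E$, then $\phi'(\mbox{DC}_e(\phi^{-1}[Y']))\subset E$ lies in a proper analytic subset of an irreducible component of $Y'$, and we land in the second alternative. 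All remaining verifications reduce to the bookkeeping of distinguished points and distinguished components already carried out in Lemmas~\ref{Lemma1}--\ref{Lemma6}.
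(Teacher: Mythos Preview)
Your argument is correct and rests on the same key input as the paper (the local flatification theorem of \cite{HLT} applied along an \'etoile on $Y$), but the organization of the dichotomy is different. You invoke Proposition~\ref{Prop1} at the outset to reduce immediately to the case where $f=S(\phi,e)$ is itself an \'etoile; then HLT yields a flattening $\pi\in f=S(\phi,e)$ directly, and Lemma~\ref{Lemma6} finishes. The paper instead fixes an arbitrary \'etoile $f\supset S(\phi,e)$, applies HLT to obtain some $\pi\in f$, and then splits into cases: if $\pi\in S(\phi,e)$ one concludes flatness exactly as you do; if $\pi\notin S(\phi,e)$, the paper factors $\pi=h_0\cdots h_r$ into local blow ups, locates the first index $s$ at which the truncation $h_0\cdots h_s$ leaves $S(\phi,e)$, and shows via Lemmas~\ref{Lemma1}, \ref{Lemma2}, and \ref{Lemma6} that $\phi''(\mbox{DC}_e(\phi^{-1}[U_s]))\subset E_s\cap \mbox{DC}_f(U_s)$, giving the second alternative at $\lambda=(h_0\cdots h_{s-1})|U_s$. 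Your route is a little cleaner precisely because Proposition~\ref{Prop1} has already absorbed this stage-by-stage bookkeeping.

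One comment: your third paragraph is superfluous and somewhat at odds with the argument you have already given. Once you know $f=S(\phi,e)$ is an \'etoile, the HLT theorem automatically produces $\pi\in f$, so there is no remaining ``obstacle'' about keeping the flatification inside $f$. The informal picture of the distinguished component being ``forced into a center'' and thereby landing in the second alternative describes the paper's case analysis, not yours; in your setup that case has been disposed of before flatification is ever invoked. The first two paragraphs already constitute a complete proof.
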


\begin{proof} Let $f \in \mathcal E_Y$ be such that $S(\phi,e)\subset f$. By Theorem 3 \cite{HLT} or Theorem 4.4 \cite{H3}, there exists $\pi\in f$ such that
$\phi':\phi^{-1}[Y']\rightarrow Y'$ is flat at points of $(\phi')^{-1}(p_{\pi}(f))\cap (\phi^{-1}[\pi])^{-1}(p_{\mbox{id}}(e))$.
 If $\pi\in S(\phi,e)$, then $\phi^{-1}[\pi]\in e$, so that 
$p_{\phi^{-1}[\pi]}(e)\in(\phi')^{-1}(p_{\pi}(f))\cap (\phi^{-1}[\pi])^{-1}(p_{\mbox{id}}(e))$ by Lemma \ref{Lemma6}, so that $\phi'$ is flat at $p_{\phi^{-1}[\pi]}(e)$.

Now suppose that  $\pi\not\in S(\phi,e)$. We can factor
$\pi=h_0h_{1}\cdots h_r$ where $(U_i,E_i,h_i)$ are
local blow ups $h_i:Y_{i+1}\rightarrow Y_i$. By Lemma \ref{Lemma1}, $h_{0}\cdots h_s\in f$, $p_{h_0\cdots h_s}(f)\in U_r$ and
$\mbox{DC}_f(U_s)\not\subset E_s$ for all $s$. There exists a largest $s$ such that $h_{0}\cdots h_{s-1}\in S(\phi,e)$, but $h_0\cdots h_s\not\in S(\phi,e)$. $U_s\subset Y_{s}$ contains $p_{h_{0}\cdots h_{s-1}}(f)$, so that $U_s\rightarrow Y\in S(\phi,e)$ by Lemma \ref{Lemma2}.

Let $\lambda=(h_{0}\cdots h_{s-1})|U_s$, and
$\phi'':\phi^{-1}[U_s]\rightarrow U_s$ be the induced morphism.
Then $\phi''(\mbox{DC}_e(\phi^{-1}[U_s])\subset \mbox{DC}_f(U_s)$ by Lemma \ref{Lemma6}. Since $\lambda h_s\not\in S(\phi,e)$, we have that 
$$
 \phi''(\mbox{DC}_e(\phi^{-1}[U_s])\subset E_s\cap \mbox{DC}_f(U_s),
 $$
  which is a proper analytic subset of the irreducible component $\mbox{DC}_f(U_s)$ of $U_s$. Now replacing $\pi$ with $\lambda$, we have obtained the conclusions of the theorem.
\end{proof}

\begin{Corollary}\label{flat} Suppose that $\phi:X\rightarrow Y$ is a morphism of reduced  complex analytic spaces, and $e\in\mathcal E_X$. Then there exists a commutative diagram of morphisms
$$
\begin{array}{lcr}
\tilde X&\stackrel{\tilde \phi}{\rightarrow}&\tilde Y\\
\gamma\downarrow&&\downarrow\delta\\
X&\stackrel{\phi}{\rightarrow}&Y
\end{array}
$$
such that 
$\gamma\in e$, $\delta$ is sequence of morphisms consisting of local blow ups and inclusions of proper analytic subsets,
$\tilde X$ is reduced, $\tilde Y$ is reduced, and
$\tilde \phi$ is flat at $p_{\gamma}(e)$.
 \end{Corollary}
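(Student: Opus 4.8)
The plan is to derive this from Theorem \ref{Theorempreflat} by induction on $\dim Y$, exploiting the dichotomy in that theorem: either flatness already holds at the distinguished point, or the image of the distinguished component falls into a proper analytic subset of a component of the modified target, in which case I restrict the target to that subset (an inclusion of a proper analytic subset, which is exactly one of the moves permitted for $\delta$) and recurse in strictly smaller target dimension. For the base case, when $\dim Y=0$ the local rings $\mathcal O_{Y,q}$ are reduced Artinian local, hence fields, so $\phi$ is automatically flat and I take $\gamma=\mathrm{id}_X\in e$, $\delta=\mathrm{id}_Y$, $\tilde\phi=\phi$.

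For the inductive step I first apply Theorem \ref{Theorempreflat} to $\phi$ and $e$, obtaining $\pi:Y'\to Y\in S(\phi,e)$ with $\gamma_0=\phi^{-1}[\pi]\in e$ and $\phi_0=\phi':\phi^{-1}[Y']\to Y'$. Since $Y'$ and $\phi^{-1}[Y']$ are sequences of local blow ups of the reduced spaces $Y$ and $X$, they are reduced by the Lemma on blow ups of a reduced space. If $\phi_0$ is flat at $p_{\gamma_0}(e)$, I simply set $\tilde X=\phi^{-1}[Y']$, $\gamma=\gamma_0$, $\tilde Y=Y'$, $\delta=\pi$ (a sequence of local blow ups), $\tilde\phi=\phi_0$, and the conclusion holds.

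The substance is the other alternative, where $\phi_0(\mathrm{DC}_e(\phi^{-1}[Y']))\subset E$ for a proper analytic subset $E$ of an irreducible component $C$ of $Y'$, so that $\dim E<\dim C\le\dim Y'\le\dim Y$. Writing $D=\mathrm{DC}_e(\phi^{-1}[Y'])$, I use a global blow up $\rho$ separating the irreducible components near $p_{\gamma_0}(e)$ (this stays in $e$ by Corollary 2.11.4 \cite{H}, as in the very definition of $\mathrm{DC}_e$) and then pass to a small neighborhood $U$ of $p_{\gamma_0\rho}(e)$ (again in $e$), so that $U$ is irreducible, equal to the strict transform of $D$. Since $\rho(U)\subset D$ and $\phi_0(D)\subset E$, the restriction $\psi:U\to E$ is a genuine morphism of reduced spaces with $\dim E<\dim Y$. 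By the localization of \'etoiles in \cite{H}, $e$ induces $e_U\in\mathcal E_U$ with $p_{\mathrm{id}}(e_U)=p_{\gamma_0\rho}(e)$ and such that every element of $e_U$ composes with $\gamma_0\rho|_U\in e$ to land in $e$. The induction hypothesis applied to $\psi$ and $e_U$ produces $\gamma_U:\tilde X\to U\in e_U$, $\delta_U:\tilde Y\to E$ a sequence of local blow ups and inclusions, and $\tilde\phi:\tilde X\to\tilde Y$ flat at $p_{\gamma_U}(e_U)$, with $\tilde X,\tilde Y$ reduced. I then set $\gamma=(\gamma_0\rho|_U)\circ\gamma_U\in e$ and $\delta=\pi\circ\iota_E\circ\delta_U$, where $\iota_E:E\hookrightarrow Y'$ is the inclusion of a proper analytic subset, so $\delta$ is again a sequence of local blow ups and inclusions; and since $p_\gamma(e)=p_{\gamma_U}(e_U)$ under the localization, $\tilde\phi$ is flat there.

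The routine parts are reducedness (blow ups and analytic subspaces of reduced spaces remain reduced) and the dimension drop $\dim E<\dim Y$ that makes the induction terminate. The main obstacle is the \'etoile bookkeeping inside case (b): one must restrict the source to an irreducible neighborhood of the distinguished point on which the distinguished component actually maps into $E$, while remaining inside $e$, and then track the point $p_\gamma(e)$ faithfully through the induced \'etoile $e_U$ so that the flatness supplied by the induction hypothesis is flatness at the correct point $p_\gamma(e)$. This is precisely where the properties of $\mathrm{DC}_e$, Corollary 2.11.4 \cite{H}, and the localization of \'etoiles from \cite{H} are indispensable.
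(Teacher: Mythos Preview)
Your argument is correct and follows essentially the same route as the paper: induction on $\dim Y$ via the dichotomy of Theorem \ref{Theorempreflat}, isolating the distinguished component on the source side and, in the non-flat alternative, restricting the target to a proper analytic subset before recursing in strictly smaller dimension. The only differences are cosmetic: the paper isolates $\mathrm{DC}_e(\phi^{-1}[Y'])$ by a full resolution of singularities (so it becomes a nonsingular connected component) rather than your component-separating blow up plus neighborhood, and it passes to an irreducible $F$ inside your $E$; your aside that the neighborhood $U$ is irreducible is not guaranteed merely by separating components, but it is also nowhere used.
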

 
 \begin{proof} The proof is by induction on the dimension of $Y$. If $\dim Y=0$, then $Y$ is a finite union of points, so $\phi$ is necessarily flat, since $\mathcal O_{Y,q}$
 is a field for all $q\in Y$. Suppose that the Corollary is true for all reduced complex analytic spaces of dimension less than
 $\dim Y$.

 By Theorem \ref{Theorempreflat}, there exists $\pi:Y'\rightarrow Y\in S(\alpha,e)$ such that
 either 
 \begin{equation}\label{eqflat1}
 \mbox{the induced morphism $\phi':\phi^{-1}[Y']\rightarrow Y'$ is flat at $p_{\phi^{-1}[\pi]}(e)$,}
 \end{equation}
  or
  \begin{equation}\label{eqflat2}
  \mbox{$\phi'(\mbox{DC}_e(\phi^{-1}[Y']))$
 is contained in a proper analytic subset of an irreducible component of $Y'$.}
 \end{equation}

 If (\ref{eqflat1}) holds then we have achieved the conclusions of the Corollary. Suppose that (\ref{eqflat2}) holds. 
 There exists an irreducible analytic subset $F$ of $Y'$ such that $\phi'(\mbox{DC}_e(\pi^{-1}[Y']))\subset F$
 and $F$ is not an irreducible component of $Y'$ (so that $\dim F<\dim Y$).
 
 Let $\tau:X''\rightarrow \phi^{-1}[Y']$ be a resolution of singularities, obtained by blowing up a nowhere dense closed  analytic subspace of $\phi^{-1}[Y']$.
 Then $\phi^{-1}[\pi]\tau\in e$. Then $X^*=\mbox{DC}_e(X'')$ is a connected component of $X''$, so the composition of inclusion of $X^*$ into $X''$
 and the morphism $\phi^{-1}[\pi]\tau$ is in $e$. We have an induced morphism of $X^*$ to $F$. By induction on the dimension of $Y$,
 the conclusions of the Corollary hold.
 \end{proof}
 
\begin{Proposition}\label{reg3} Suppose that $\phi:X\rightarrow Y$ is a morphism of reduced, irreducible, locally irreducible complex analytic spaces and  $\phi$ is regular. Further suppose that  $\alpha:X'\rightarrow X$, $\beta:Y'\rightarrow Y$ are sequences of local blow ups 
such that $X'$ and $Y'$ are reduced, irreducible, locally irreducible, and there is a commutative diagram of morphisms
$$
\begin{array}{lcr}
X'&\stackrel{\phi'}{\rightarrow}&Y'\\
\alpha\downarrow&&\downarrow\beta\\
X&\stackrel{\phi}{\rightarrow}&Y.
\end{array}
$$
 Then $\phi'$ is regular.
\end{Proposition}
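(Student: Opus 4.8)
The plan is to establish the conclusion directly from the definition: I will produce a nonempty open subset of $Y'$ lying inside $\phi'(X')$. The key preliminary observation is that a sequence of local blow ups of thin analytic subsets of an irreducible space is an isomorphism over a dense open subset. A single local blow up $(U,E,\pi)$ is an isomorphism over the dense open set $U\setminus E$ (the blow up is an isomorphism away from its thin center, and $U$ is dense open in the irreducible ambient space). Composing these isomorphisms, and intersecting the successive isomorphism loci, I obtain dense open subsets $V_\alpha\subseteq X$ and $W^\circ\subseteq Y$ over which $\alpha$ and $\beta$ respectively restrict to isomorphisms; the intersections stay dense and nonempty because each intermediate space is irreducible (the blow up of an irreducible reduced space along a proper closed center is again irreducible). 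In parallel, since $\phi$ is regular I invoke Lemma \ref{Lemma5} to obtain a thin set $G\subseteq X$ such that $X^\circ:=X\setminus G$ is dense open, $\phi|_{X^\circ}$ is an open mapping, and $\phi(X^\circ)$ is open in $Y$.

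I then combine these. Put $V^\circ:=V_\alpha\cap X^\circ$, dense open in $X$ (intersection of two dense opens in the irreducible $X$); over $V^\circ$ the map $\alpha$ is an isomorphism and $\phi|_{V^\circ}$ is open. Since $Y$ is irreducible, $\phi(V^\circ)\cap W^\circ$ is a nonempty open subset of $Y$, and $V_1:=(\phi|_{V^\circ})^{-1}(\phi(V^\circ)\cap W^\circ)$ is a nonempty open subset of $V^\circ$ whose image $\phi(V_1)$ is open in $Y$ and contained in $W^\circ$. The claim is that $\beta^{-1}(\phi(V_1))\subseteq\phi'(X')$. First, $\beta^{-1}(\phi(V_1))$ is a nonempty open subset of $Y'$, by continuity together with $\phi(V_1)\subseteq W^\circ$ and the fact that $\beta$ is an isomorphism over $W^\circ$. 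Next, for $y'\in\beta^{-1}(\phi(V_1))$ I write $\beta(y')=\phi(x)$ with $x\in V_1$ and set $x':=(\alpha|_{V^\circ})^{-1}(x)$; the commutativity $\beta\circ\phi'=\phi\circ\alpha$ of the square gives $\beta(\phi'(x'))=\phi(\alpha(x'))=\phi(x)=\beta(y')$, and since $\phi(x)\in W^\circ$ where $\beta$ is injective, this forces $\phi'(x')=y'$. Hence $\beta^{-1}(\phi(V_1))=\phi'(\alpha^{-1}(V_1))\subseteq\phi'(X')$, so $\phi'(X')$ contains an open subset of $Y'$ and $\phi'$ is regular by definition.

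The step requiring the most care is the very first one: verifying that each of $\alpha$ and $\beta$ is genuinely an isomorphism over a dense open set. The \emph{local} nature of the blow ups is the obstacle — each is defined only over an open subset $U_i$, and the blow up projection is not an open map but only proper onto $U_i$ — so one must track the isomorphism loci through the composition and repeatedly use the irreducibility of the intermediate spaces to guarantee that the successive dense open sets still meet. Once that is in place, everything else is elementary manipulation of open sets together with the commutativity of the square; notably, no dimension count and no appeal to \eqref{eqreg1} is required, although one could alternatively argue via dimensions by observing that $\dim\beta(\phi'(X'))=\dim\phi(\alpha(X'))=\dim Y=\dim Y'$ and that a morphism cannot increase the dimension of an image.
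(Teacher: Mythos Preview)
Your proof is correct and follows essentially the same route as the paper's: both arguments extract dense open sets over which $\alpha$ and $\beta$ are isomorphisms, invoke Lemma~\ref{Lemma5} to make $\phi$ an open map off a thin set, and then chase the commutative square to exhibit a nonempty open subset of $Y'$ inside $\phi'(X')$. The only cosmetic difference is that the paper verifies the key intersection is nonempty via a dimension count (using $\dim\beta(F)<\dim Y$ and the containment $\phi(W)\subset\beta(Y')$ forced by the square), whereas you use directly that $W^\circ$ is dense in the irreducible space $Y$; these are interchangeable.
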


\begin{proof} There exists an analytic subset $F$ of $Y'$ such that $\dim F<\dim Y'=\dim Y$, $\dim \beta(F)<\dim Y$, $Y'\setminus F=Y'\setminus \beta^{-1}(\beta(F))$,
and  $\beta|(Y'\setminus F):Y'\setminus F\rightarrow Y$ is an isomorphism onto an open subset of $Y$.

There exists an analytic subset $H$ of $X'$ such that $\dim H<\dim X'=\dim X$, $\dim \alpha(H)<\dim X$, $V=X'\setminus H=X'\setminus \alpha^{-1}(\alpha(H))$
is an open subset of $X'$
and  $\alpha|V:V\rightarrow X$ is an isomorphism onto an open subset of $X$.

Since $\phi$ is regular, by Lemma \ref{Lemma5}, there exists a nowhere dense closed analytic subset $G$ of $X$ such that $\phi(X\setminus G)$ is an open subset of $Y$,  $\dim \phi(G)<\dim Y$, and $\phi|(X\setminus G)$ is an open mapping.

$\dim G\le \dim X-1$ implies $W:=(X\setminus G)\cap\alpha(V)$ is a nonempty open subset of $X$. $\phi(W)$ is an open subset of $Y$.
$\phi(W)\subset \beta(Y')=\beta(Y'\setminus F)\cup \beta(F)$. Since $\dim \beta(F)<\dim Y$, we have that $\phi(W)\cap \beta(Y'\setminus F)$ is a nonempty open subset of $Y$. Since $\alpha$ is an isomorphism over $W$ and $\beta$ is an isomorphism over $\beta(Y'\setminus F)$, we have that  $\phi'(V)$ contains the nonempty open set $\beta^{-1}(\phi(W)\cap \beta(Y'\setminus F))$. Thus $\phi'$ is regular.
\end{proof}

 \begin{Proposition}\label{Prop2} Suppose that $\phi:X\rightarrow Y$ is a morphism of reduced, irreducible, locally irreducible complex analytic spaces, and $e\in \mathcal E_X$.
 Then $S(\phi,e)\in \mathcal E_Y$ if and only if $\phi$ is regular.
 \end{Proposition}
 
 \begin{proof} 
 In Corollary \ref{flat}, $\tilde\phi$ is an open morphism to $\tilde Y$, since $\tilde\phi$ is flat (\cite{D} or Theorem V.2.12 \cite{BS}) so $\phi$ is regular if and only if $\delta$ is a sequence of local blowups. Thus the Proposition follows from Proposition \ref{Prop1}, and since a local blow up is strict. 
 \end{proof}
 
 \begin{Proposition}\label{Etoileext} Suppose that $\phi:X\rightarrow Y$ is a morphism of reduced, irreducible, locally irreducible complex analytic spaces and $f\in \mathcal E_Y$. Then there exists $e\in \mathcal E_X$ such that $S(\phi,e)=f$ if and only if $\phi$ is regular.
 \end{Proposition}
 
 \begin{proof} By Proposition \ref{Prop2}, if such an $e$ exists then $\phi$ must be regular, so suppose that $\phi$ is regular. We may restrict $\phi$ to a relatively compact open subset of $X$. Let $e_0$ be the subcategory of $\mathcal E(X)$ of morphisms of analytic spaces determined by the associated morphisms  $\phi^{-1}[Y_1]\rightarrow X$ of $\pi:Y_1\rightarrow Y\in f$. Since $\phi$ is regular and $\pi$ is strict, $\phi^{-1}[Y_1]\ne \emptyset$ for all $\pi\in f$ so $e_0$ satisfies 1) of Definition \ref{EtoileDef} of an \'etoile. Since $f$ is an \'etoile, $e_0$ satisfies 2) and 3) of the definition of an \'etoile. By Zorn's lemma, there exists a \'etoile $e\in \mathcal E_X$ containing $e_0$ (Lemma 2.2 \cite{H}).
 
 Now $f\subset S(\phi,e)$ and $S(\phi,e)$ is an \'etoile on $Y$ (by Proposition \ref{Prop2}) so $f=S(\phi,e)$ since $f$ satisfies the maximality condition 4) of Definition \ref{EtoileDef}.
 \end{proof}

\section{Regularization of analytic maps}\label{Secreg}

\begin{Theorem}\label{regmap} Suppose that $\phi:X\rightarrow Y$ is a morphism of reduced  complex analytic spaces, and $e\in\mathcal E_X$. Then there exists a commutative diagram of morphisms
$$
\begin{array}{lcr}
\tilde X&\stackrel{\tilde \phi}{\rightarrow}&\tilde Y\\
\gamma\downarrow&&\downarrow\delta\\
X&\stackrel{\phi}{\rightarrow}&Y
\end{array}
$$
such that 
$\gamma\in e$, $\delta$ is sequence of morphisms consisting of local blow ups  and inclusions of proper analytic subsets,
$\tilde X$ is nonsingular and irreducible, $\tilde Y$ is nonsingular and irreducible and
$\tilde \phi$ is regular.
 \end{Theorem}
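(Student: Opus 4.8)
The plan is to bootstrap the flatness already produced in Corollary \ref{flat} into regularity, using resolution of singularities and of indeterminacy, and using the \'etoile $e$ only to extract a distinguished nonsingular component on the source. First I would apply Corollary \ref{flat} to obtain a commutative square with $\gamma_0\in e$, with $\delta_0$ a sequence of local blow ups and inclusions of proper analytic subsets, with $\tilde X_0$ and $\tilde Y_0$ reduced, and with the induced $\phi_0\colon\tilde X_0\to\tilde Y_0$ flat at $p_0:=p_{\gamma_0}(e)$. Writing $q_0=\phi_0(p_0)$, the decisive consequence of flatness of $\mathcal O_{\tilde Y_0,q_0}\to\mathcal O_{\tilde X_0,p_0}$ is going-down: the contraction of a minimal prime is minimal. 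Applying this to a branch of the distinguished component $F:=\mbox{DC}_e(\tilde X_0)$ at $p_0$ shows that $\overline{\phi_0(F)}$ is an irreducible analytic set containing, hence (by dimension) equal to, an irreducible component $G$ of $\tilde Y_0$ through $q_0$; thus $\phi_0|_F\colon F\to G$ is dominant. Replacing $\tilde Y_0$ by $G$ through the inclusion $G\hookrightarrow\tilde Y_0$ (a proper analytic subset when $\tilde Y_0$ is reducible) keeps $\delta$ of the required type and yields a dominant morphism $\psi\colon F\to G$ of irreducible analytic spaces.

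Next I would render both spaces nonsingular while respecting the \'etoile data. Resolving $G$ by a sequence of blow ups of thin analytic subsets (\cite{H1}, \cite{H2}, \cite{AHV}) gives a nonsingular irreducible $\beta\colon\hat Y\to G$, again a sequence of local blow ups, which I append to $\delta$. On the source I would simultaneously resolve the singularities of $\tilde X_0$ and the indeterminacy of the meromorphic map $\tilde X_0\dashrightarrow\hat Y$ by a sequence $\tau\colon X^\ast\to\tilde X_0$ of blow ups of thin analytic subsets; then $\gamma_0\tau\in e$, and setting $\hat X:=\mbox{DC}_e(X^\ast)$ produces a nonsingular, irreducible, locally irreducible connected component with $\hat X\to X\in e$ (by the distinguished-component machinery of Lemmas \ref{Lemma2} and \ref{Lemma6}), whose image in $\tilde X_0$ is $F$, together with an actual morphism $\tilde\phi\colon\hat X\to\hat Y$. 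Writing $\gamma\colon\hat X\to X$ for the resulting element of $e$ and $\delta\colon\hat Y\to G\hookrightarrow\tilde Y_0\to Y$, the square commutes and carries all the asserted structure except possibly the regularity of $\tilde\phi$.

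Finally I would check regularity by a dominance argument. The composite $\hat X\xrightarrow{\tilde\phi}\hat Y\xrightarrow{\beta}G$ equals $\hat X\to F\xrightarrow{\psi}G$; since $\hat X\to F$ is surjective and $\psi$ is dominant, $\beta\circ\tilde\phi$ is dominant onto $G$. As $\beta$ is a proper surjective modification with $\dim\hat Y=\dim G$, the irreducible analytic set $\overline{\tilde\phi(\hat X)}\subseteq\hat Y$ maps onto $G$ and must therefore equal $\hat Y$, so $\tilde\phi$ is dominant. Since $\hat X$ and $\hat Y$ are nonsingular and irreducible, $\dim\tilde\phi(\hat X)=\dim\hat Y$ together with the rank formula (\ref{eqreg1}) and Remmert's rank and open mapping theorems (used exactly as in the proof of Lemma \ref{Lemma5}) shows that $\tilde\phi(\hat X)$ contains an open subset of $\hat Y$, i.e.\ $\tilde\phi$ is regular. (Alternatively, having arranged a regular morphism between nonsingular models downstairs, Proposition \ref{reg3} propagates regularity through the blow ups constituting $\gamma$ and $\delta$.)

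I expect the main obstacle to be the flatness-to-dominance passage coupled with the \'etoile bookkeeping: one must verify that going-down genuinely routes the distinguished component $F$ onto a full-dimensional component $G$ of $\tilde Y_0$, and that the simultaneous resolution of singularities and of indeterminacy on the source can be realized by blow ups of thin sets whose distinguished component $\hat X$ both resolves $F$ and carries the point $p_\gamma(e)$, so that $\gamma$ stays in $e$ and dominance is preserved through every modification.
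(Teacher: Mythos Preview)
Your outline follows the same architecture as the paper's proof: invoke Corollary~\ref{flat} to reduce to the flat case, use flatness to see that the distinguished component on the source lands in a full irreducible component of the target, resolve both sides (with resolution of indeterminacy on the source) so that the distinguished component becomes a nonsingular connected component lying in $e$, and then invoke Proposition~\ref{reg3} to propagate regularity through the blow ups.

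The one substantive difference is how flatness is exploited. The paper does not use going--down; instead it uses that a flat morphism of analytic spaces is open (Douady \cite{D}): after shrinking to a neighborhood $U$ of $p$ in which $\mbox{DC}_e(U)$ is locally irreducible, the set $W=\mbox{DC}_e(U)\setminus\overline{U\setminus\mbox{DC}_e(U)}$ is open in $X$, so $\phi(W)$ is open in $Y$, and this \emph{is} the statement that the restriction $\mbox{DC}_e(U)\to V^*$ is regular. This sidesteps the two soft spots in your argument: you never justify that $\overline{\phi_0(F)}$ or $\overline{\tilde\phi(\hat X)}$ are analytic (images of non--proper analytic maps generally are not, and there is no Zariski closure operation for arbitrary subsets of an analytic space), and your going--down step only produces information about a single branch of $F$ at $p_0$ mapping to a single branch of $\tilde Y_0$ at $q_0$, which does not by itself yield a morphism $F\to G$ between irreducible, \emph{locally irreducible} spaces as required to feed into Proposition~\ref{reg3}. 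The paper handles the local irreducibility issue explicitly by first shrinking $Y$ to a neighborhood $V$ whose components are locally irreducible, then shrinking $X$ to $U\subset\phi^{-1}(V)$ with $\mbox{DC}_e(U)$ locally irreducible; only after that does it set up the diagram to which Proposition~\ref{reg3} applies. Your sketch can be repaired along the same lines, but as written the ``dominance $\Rightarrow$ regular'' step and the applicability of Proposition~\ref{reg3} are not yet on firm ground.
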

 
 \begin{proof} By Corollary \ref{flat}, we may assume that $\phi$ is flat.
  Let $p=p_{\mbox{id}}(e)$.

 There exists an open subset $V$ of $Y$ which contains $\phi(p)$, such that all irreducible components of $V$ are locally irreducible.

 There exists an open subset $U$ of $\phi^{-1}(V)$ containing $p$ such that $\mbox{DC}_e(U)$ is locally irreducible.
 Let $G$ be the union of the irreducible components of $U$ other than ${\rm DC}_e(U)$.
 Let
 $W=\mbox{DC}_e(U)\setminus G$. $W$ is a nonempty open subset of $X$, so $\phi(W)$ is an open subset of $V$, since $\phi$ is flat, \cite{D} or Theorem V.2.12 \cite{BS}.
 Let $V^*$ be the irreducible component of $V$ containing $\phi(\mbox{DC}_e(U))$.
 By definition, the induced map $\phi:\mbox{DC}_e(U)\rightarrow V^*$ is regular at $p$.

 Let $\tau:V'\rightarrow V$ be a resolution of singularities. 
  $\tau$ is the blow up of a nowhere dense closed analytic set $E$, and $H=\phi^{-1}(E)$ is nowhere dense in $X$ since 
 $\phi$ is flat. Let $\mathcal I$ be the ideal sheaf of $H$ in $X$.

 Let $\pi:X'\rightarrow U$ be a resolution of singularities, obtained by a sequence of global blow ups of nowhere dense closed analytic sets, so that
 $\mathcal I\mathcal O_{X'}$ is invertible. The composition of $\pi$ with the inclusion of $U$ into $X$ is in $e$. Since $\mbox{DC}_e(X')$ is a connected component of $X'$, the induced morphism $\mbox{DC}_e(X')\rightarrow X$ is in $e$.
 
 $\mbox{DC}_e(X')$ is necessarily the strict transform of $\mbox{DC}_e(U)$ in $X'$. Thus $\mbox{DC}_e(X')\rightarrow \mbox{DC}_e(U)$
 is a product of blow ups. Thus the induced morphism $\tilde X=\mbox{DC}_e(X')\rightarrow \tilde Y=(V')^*$
 is regular 
 by Proposition \ref{reg3}, where $(V')^*$ is the connected component of $V'$ which contains the image of  $\mbox{DC}_e(X')$.
 \end{proof}

\begin{Theorem}\label{TheoremA} Suppose that $\phi:Y\rightarrow X$ is a morphism of reduced  complex analytic spaces and $e\in \mathcal E_Y$
is an \'etoile over $Y$. Then there exists a commutative diagram of morphisms
\begin{equation}\label{eq1}
\begin{array}{ccc}
\tilde Y&\stackrel{\tilde \phi}{\rightarrow}&\tilde X\\
\delta\downarrow &&\downarrow \gamma\\
Y&\stackrel{\phi}{\rightarrow}& X
\end{array}
\end{equation}
such that $\delta\in e$ is a finite product of local blow ups of  nonsingular analytic sub varieties, $\gamma$ is a finite product of local blow ups of  nonsingular analytic sub varieties    and inclusions of analytic sub varieties, $\tilde Y$ and $\tilde X$ are smooth analytic spaces and $\tilde \phi$ is a regular analytic morphism.
\end{Theorem}
 
 \begin{proof} The proof is by induction on the dimension of $Y$. When $Y$ has dimension zero then 
 letting $\tilde X=\phi(p_{\rm id}(e))$ and $\tilde Y$ be the connected component of a resolution of singularities of $Y$ (which is a product of blowups of nonsingular nowhere dense sub varieties)
 we have that $\tilde \phi$ is regular. 
 
 Suppose that $\dim Y=r$ and the theorem is true when $Y$ has dimension less than $r$. Let
 $$
 \begin{array}{rcl}
 \tilde X&\stackrel{\tilde \phi}{\rightarrow}&\tilde Y\\
 \gamma\downarrow&&\downarrow\delta\\
 X&\stackrel{\phi}{\rightarrow}&Y
 \end{array}
 $$
 be the diagram constructed in Theorem  \ref{regmap}.
 
 We can  factor the diagram as
 $$
 \begin{array}{rcl}
 \tilde X&\stackrel{\tilde\phi}{\rightarrow}&\tilde Y\\
 \gamma_2\downarrow&&\downarrow \delta_2\\
 X_1&\stackrel{\phi_1}{\rightarrow}&Y_1\\
 \gamma_1\downarrow&&\downarrow\delta_1\\
 X&\stackrel{\phi}{\rightarrow}&Y
 \end{array}
 $$
 where $\delta_1\in S(\phi,e)$,  $\gamma_1\in e$,   
 and either $X_1=\tilde X$ and  $Y_1=\tilde Y$ or 
 $$
 \phi_1({\rm DC}_e(X_1))\mbox{ is contained in a proper analytic subset of $Y_1$.}
 $$
   Factor $\delta_1:Y_1\rightarrow Y$ as 
 \begin{equation}\label{eqF2}
Y_1= Z_c\stackrel{\alpha_c}{\rightarrow} Z_{c-1}\stackrel{\alpha_{c-1}}{\rightarrow}\cdots\stackrel{\alpha_1}{\rightarrow} Z_0=Y
 \end{equation}
 where each $\alpha_i$ is a local blow up. 
 
 The morphism $\alpha_1$ is the blow up of an analytic subspace $E_0$ of an open  subset $U_0$ of $Y$. 
 By principalization of ideals, there exists a sequence of blow ups of nonsingular analytic subspaces $W_1\rightarrow U_0$ such that the ideal sheaf $\mathcal I_{E_0}\mathcal O_{W_1}$ is locally principal. Then by the universal property of blow ups of ideals, there is a factorization $W_1\stackrel{\sigma_1}{\rightarrow}Z_1\rightarrow Y$.
 Factor the proper map $W_1\rightarrow U_0$ as a sequence of blow ups of nonsingular analytic sub varieties
 \begin{equation}\label{eqF1}
 W_1=V_s\stackrel{\tau_s}{\rightarrow}V_{s-1}\rightarrow \cdots\rightarrow V_1\stackrel{\tau_1}{\rightarrow} U_0.
 \end{equation}
 
 Suppose there exists an index $t$ in (\ref{eqF1}) such that $V_t\rightarrow Y\in S(\phi,e)$ but $V_{t+1}\rightarrow Y\not\in S(\phi,e)$. This can only happen if the image of ${\rm DC}_e(\phi^{-1}[V_t])$ in $V_t$ is contained in the analytic subspace $F_t$ of $V_t$ blown up in $V_{t+1}\rightarrow V_t$. We  have a commutative diagram
 $$
 \begin{array}{rcl}
 \phi^{-1}[V_t]&\stackrel{\phi'}{\rightarrow}&V_t\\
 \alpha\downarrow&&\downarrow\beta\\
 X&\stackrel{\phi}{\rightarrow}&Y
 \end{array}
 $$
 such that $\beta$ is a sequence of local blow ups of nonsingular analytic sub varieties and $\phi^{-1}[V_t]\rightarrow X\in e$.
 
  Let $\lambda:X''\rightarrow X\in e$ be a sequence of local blow ups of nonsingular analytic sub varieties such that $X''$ is nonsingular and there is a factorization $X''\rightarrow \phi^{-1}[V_t]\rightarrow X$.
  Let $X^*={\rm DC}_e(X'')$. The sub variety $X^*$ is a connected component of $X''$ since $X''$ is nonsingular, so that composition of the inclusion of $X^*$ into $X''$ and the morphism $\lambda$ is in $e$. We have an induced morphism $X^*\rightarrow F_t$.  The theorem now follows by induction on $\dim Y$, as $\dim F_t<r=\dim Y$.

  Now suppose that $W_1\rightarrow Y\in S(\phi,e)$.  Recall that $\sigma_1:W_1\rightarrow Z_1$ is the induced morphism. The local blow up $\alpha_2:Z_2\rightarrow Z_1$ in (\ref{eqF2}) is the blow up of an analytic subspace $E_1$ of an open subset $U_1$ of $Z_1$.  We now construct a sequence of blow ups of nonsingular analytic sub varieties $W_2\rightarrow \gamma_1^{-1}(U_1)$ such that $\mathcal I_{E_1}\mathcal O_{W_2}$ is a locally principal ideal sheaf. We either have that the composition $W_2\rightarrow W_1\rightarrow Y\not\in S(\phi,e)$, in which case we obtain, as explained above,  a reduction in the dimension of $Y$ from which the theorem follows, or we obtain $W_2\rightarrow W_1\rightarrow Y\in S(\phi,e)$ which is a composition of local blow ups of nonsingular analytic  subspaces.

 Continuing in this way, we either obtain a reduction to $\dim Y<r$, from which the theorem follows, or we construct a morphism $\epsilon:W\rightarrow Y_1$ 
  such that $\lambda=\delta_1\epsilon\in S(\phi,e)$ is a composition of local blow ups of nonsingular sub varieties. We have the induced diagram
 $$
 \begin{array}{rcl}
 &&W\\
 &&\downarrow\epsilon\\
 X_1&\stackrel{\phi_1}{\rightarrow}&Y_1\\
 \gamma_1\downarrow&&\downarrow\delta_1\\
 X&\stackrel{\phi}{\rightarrow}&Y.
 \end{array}
 $$
  By resolution of singularities and principalization of ideals, there exists a sequence of local blow ups of nonsingular analytic sub varieties $\psi:X^*\rightarrow X\in e$ such that $X^*$ is nonsingular and connected and there is a commutative diagram of morphisms 
 $$
 \begin{array}{rcl}
 &X^*&\\
 \swarrow&&\searrow\\
 X_1&\downarrow \psi&\phi^{-1}[W]\\
 \searrow&&\swarrow\\
 &X.&
 \end{array}
 $$
 
 First suppose that $\phi_1:X_1\rightarrow Y_1$ is regular.  Let $\xi:W^*\rightarrow W$ be a sequence of blow ups of nonsingular analytic sub varieties which are nowhere dense such that $W^*$ is nonsingular. Then $W^*\rightarrow W\rightarrow Y\in S(\phi,e)$ by Proposition \ref{Prop2}. Let $X^{**}\rightarrow X\in e$ be a sequence of blow ups of nonsingular analytic sub varieties such that there is a commutative diagram
 $$
 \begin{array}{rcl}
 X^{**}&\rightarrow &W^*\\
 \downarrow&&\downarrow\xi\\
 X^*&\rightarrow& W.
 \end{array}
 $$
 The morphism $X^{**}\rightarrow W^*$ is regular by Proposition \ref{reg3}, and we have obtained the conclusions of Theorem \ref{TheoremA}.
 
 Now suppose that $\phi_1({\rm DC}_e(X_1))$ is contained in a nowhere dense analytic subspace $G$ of $Y_1$. Then the image of $X^*$ in $W$ is contained in the preimage
 $Y^*$ of $G$ in $W$, which is nowhere dense in $W$. The theorem now follows from induction on $r$ with the morphism $X^*\rightarrow Y^*$ since $\dim Y^*<r$.

 \end{proof}

\begin{Theorem}\label{TheoremRegFin} Suppose that $\phi:Y\rightarrow X$ is a morphism of reduced  complex analytic spaces and $e\in \mathcal E_Y$
is an \'etoile over $Y$. Then there exists a commutative diagram of morphisms
$$
\begin{array}{ccc}
Y_e&\stackrel{\phi_e}{\rightarrow}&X_e\\
\delta\downarrow &&\downarrow \gamma\\
Y&\stackrel{\phi}{\rightarrow}& X
\end{array}
$$
such that $\delta\in e$ is a finite product of local blow ups of nonsingular analytic sub varieties, $\gamma$ is a finite product of local blow ups of nonsingular analytic sub varieties, $Y_e$ and $X_e$ are smooth analytic spaces and $ \phi_e$ is a regular analytic morphism to a nonsingular analytic sub variety of $X_e$.
\end{Theorem}

\begin{proof} The Theorem follows from Theorem \ref{TheoremA} and the observation that if $W$ is an analytic space, $Z\subset W$ is a closed analytic subspace and $V\subset Z$ is a closed analytic subspace, then the blow up of $V$ in $Z$ is the strict transform of $Z$ in the blow up of $V$ in $W$.
\end{proof}

 \section{The valuation associated to an \'etoile}\label{Secval}
 
 Suppose that $Y$ is a reduced complex analytic space, $e\in\mathcal E_Y$ and $\pi\in e$. We will call $\pi$ nonsingular if $\pi$ is a composition of
 local blow ups
 $$
 Y_{n}\rightarrow Y_{n-1}\rightarrow \cdots\rightarrow Y_1\rightarrow Y
 $$
 such that each $Y_i$ is nonsingular.
 
 We associate to a nonsingular $\pi\in e$ the local ring $A_{\pi}=\mathcal O_{X,p_{\pi}(e)}$. The set 
 $$
 \{A_{\pi}\mid \pi\in e\mbox{ is nonsingular}\}
 $$
 is then a directed set, by Lemma \ref{Lemma3} and Definition \ref{EtoileDef}.  
 The set of quotient fields $K_{\pi}$ of the $A_{\pi}$ also form a
 directed set.  Let
 $$
 \Omega_e=\lim_{\rightarrow}K_{\pi}\mbox{ and }V_e=\lim_{\rightarrow}A_{\pi}.
 $$
  $\Omega_e$ is a field, and $V_e$ is a local ring with quotient field $\Omega_e$.
 
 \begin{Lemma}\label{Lemma4} $V_e$ is a valuation ring.
 \end{Lemma}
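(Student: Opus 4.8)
The plan is to verify the standard valuation-ring criterion: $V_e$ is a valuation ring of $\Omega_e$ if and only if for every nonzero $x\in\Omega_e$, either $x\in V_e$ or $x^{-1}\in V_e$. The directed-limit construction makes this tractable, because any single element $x$ and its inverse come from a single ring in the directed system. The hard part — and the place I would focus — is to show that after passing to a suitable further blow-up $\pi'\in e$, the element $x$ becomes a monomial in local coordinates at $p_{\pi'}(e)$, so that exactly one of $x,x^{-1}$ lies in the local ring $A_{\pi'}$. This is precisely where one invokes resolution of singularities and principalization, combined with the fact (Lemma \ref{Lemma3}) that the nonsingular $\pi\in e$ form a cofinal directed subsystem and that $p_{\pi'}(e)$ tracks coherently through the tower via \eqref{eq3}.

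First I would take a nonzero $x\in\Omega_e=\varinjlim K_\pi$. By the definition of the direct limit, $x$ is represented by an element of $K_\pi$ for some nonsingular $\pi:Y'\to Y\in e$; writing $p=p_\pi(e)$, we may regard $x=f/g$ as a ratio of elements $f,g\in A_\pi=\mathcal O_{Y',p}$, where $A_\pi$ is a regular local ring (so a UFD, and in particular $K_\pi$ is its quotient field). I want to arrange that the ideal $(f,g)A_\pi$ becomes principal after a further blow-up. Let $\mathcal I$ be the ideal sheaf locally generated by $f$ and $g$ near $p$; choose a neighbourhood $U$ of $p$ in $Y'$ on which this is defined, and apply resolution/principalization (Hironaka's theorem, available since $A_\pi$ is excellent and reduced, as recorded after Lemma \ref{coeffield}) to obtain a sequence of blow-ups $\sigma:Y''\to U$ of nonsingular centres making $\mathcal I\mathcal O_{Y''}$ invertible, with $Y''$ nonsingular. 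By Lemma \ref{Lemma3} together with properties 2) and 3) of Definition \ref{EtoileDef}, I may assume this composite $\pi'=\pi\sigma$ (after restriction to the distinguished component and passage to a cofinal element) lies in $e$ and is nonsingular, so that $A_{\pi'}=\mathcal O_{Y'',p_{\pi'}(e)}$ is a further member of our directed system.

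Now at $p':=p_{\pi'}(e)$ the localized ideal $\mathcal I\mathcal O_{Y'',p'}$ is principal, generated by some $h\in A_{\pi'}$, and $f,g$ map into $A_{\pi'}$ with $f=uh$, $g=vh$ for elements $u,v\in A_{\pi'}$ at least one of which is a unit (since together $u,v$ generate the unit ideal in the local ring, so one lies outside the maximal ideal). Then in $\Omega_e$ we have $x=f/g=u/v$. If $v$ is a unit, then $x=u/v\in A_{\pi'}\subseteq V_e$; if instead $u$ is a unit, then $x^{-1}=v/u\in A_{\pi'}\subseteq V_e$. Either way the criterion is satisfied for this $x$.

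The main obstacle, as flagged, is the bookkeeping needed to guarantee that the principalizing blow-up can be chosen inside the \'etoile $e$ and compatibly with the marked points $p_\pi(e)$: one must restrict to the distinguished irreducible component $\mbox{DC}_e(-)$ at each stage (so that the relevant local rings are domains and \eqref{eq3} governs the image of the centre point), and one must use Lemma \ref{Lemma3} to replace the possibly singular tower by a nonsingular one without leaving $e$. Once this compatibility is in hand, the UFD/principalization step is routine. It remains only to observe that $V_e$ is local with maximal ideal $\varinjlim \mathfrak m_{A_\pi}$ and quotient field $\Omega_e$ — both immediate from the direct-limit construction — so the criterion just verified shows $V_e$ is a valuation ring of $\Omega_e$, completing the proof.
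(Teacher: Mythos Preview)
Your proposal is correct and follows essentially the same approach as the paper's own proof: represent $x=f/g$ in some $K_\pi$, principalize the ideal $(f,g)$ via a further blow-up lying in $e$, and conclude that one of $f,g$ divides the other in the resulting local ring $A_{\pi'}\subset V_e$. The paper's version is slightly more economical---it packages the principalization as a single blow-up of an ideal sheaf on a neighbourhood of $p_\pi(e)$ and invokes Lemma~\ref{Lemma2} directly (since $Y'$ is already nonsingular, the blow-up centre is automatically thin) rather than the Lemma~\ref{Lemma3} and distinguished-component bookkeeping you flag---but the strategy and substance are identical.
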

 \begin{proof} Suppose that $f\in K_e$. Then there exists $\pi\in e$ such that $f\in K_{\pi}$. $f=\frac{g}{h}$ with $g,h\in A_{\pi}$, where 
 $A_{\pi}$ is the local ring associated to $\pi:X_{\pi}\rightarrow Y$. Let $U\subset X_{\pi}$ be an open neighborhood of $p_{\pi}(e)$ on which 
 $g$ and $h$ are  holomorphic. 
 There exists an ideal sheaf $\mathcal I\subset \mathcal O_{U}$ such that the blow up $X'=B(\mathcal I)$ of $\mathcal I$ is nonsingular, and $(g,h)\mathcal O_{X'}$ is locally principal. Let $\lambda:X'\rightarrow X_{\pi}$ be the induced local blow up. $\pi\lambda\in e$ by Lemma \ref{Lemma2}. We have that either $g\mid h$ or $h\mid g$ in $A_{\pi\lambda}$. Thus $f$ or $\frac{1}{f}\in A_{\pi\lambda}\subset V_e$.
 \end{proof}
 
 \begin{Proposition}\label{Prop3} Suppose that $X$ and $Y$ are reduced, irreducible and locally irreducible complex analytic spaces,  $\phi:X\rightarrow Y$ is a regular morphism,
 and $e\in \mathcal E_X$. Then $f=S(\phi,e)\in \mathcal E_Y$, $\Omega_f\subset\Omega_e$ and $V_f=V_e\cap \Omega_f$.
 \end{Proposition}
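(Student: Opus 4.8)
The plan is to establish the three assertions in sequence, using the machinery built up in the preceding sections. For the first claim, that $f=S(\phi,e)\in\mathcal E_Y$, I would invoke Proposition \ref{Prop2}: since $X$ and $Y$ are nonsingular (in particular reduced, irreducible, and locally irreducible) and $\phi$ is regular, Proposition \ref{Prop2} gives $S(\phi,e)\in\mathcal E_Y$ directly. This is the cleanest step.

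For the inclusion $\Omega_f\subset\Omega_e$, I would argue at the level of the directed systems of local rings. Recall $\Omega_f=\varinjlim K_\pi$ where $\pi$ ranges over nonsingular elements of $f$, and similarly for $\Omega_e$. Given a nonsingular $\pi:Y'\to Y\in f=S(\phi,e)$, I would form the strict transform $\phi^{-1}[\pi]\in e$ and then apply Lemma \ref{Lemma3} to replace it by a nonsingular $\pi''\in e$ dominating it, whose intermediate spaces are all nonsingular. The induced morphism $\phi':\phi^{-1}[Y']\to Y'$, composed with the maps from Lemma \ref{Lemma3}, gives a map of local rings $A_\pi=\mathcal O_{Y',p_\pi(f)}\to A_{\pi''}=\mathcal O_{X'',p_{\pi''}(e)}$; here Lemma \ref{Lemma6} guarantees that $\phi'(p_{\phi^{-1}[\pi]}(e))=p_\pi(f)$, so the pullback homomorphism $\phi'^*$ is a local homomorphism landing in the correct local ring. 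Passing to fraction fields and then to the direct limit produces the field embedding $\Omega_f\hookrightarrow\Omega_e$. The key point making this an \emph{injection} is Proposition \ref{Propreg}: since $\phi'$ is a regular morphism of nonsingular spaces (regularity is preserved by Proposition \ref{reg3}), the completed map $\hat\phi'^*$ is injective, hence so is $\phi'^*$ on the local rings and thus on fraction fields.

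For the final and most delicate claim, $V_f=V_e\cap\Omega_f$, the inclusion $V_f\subset V_e\cap\Omega_f$ follows from the previous paragraph together with the observation that each $A_\pi\to A_{\pi''}$ is a local homomorphism, so elements of $V_f$ (which are units or non-units compatibly across the system) map into $V_e$. The reverse inclusion $V_e\cap\Omega_f\subset V_f$ is where the work lies. I would take $f\in\Omega_f$ with $f\in V_e$, write $f\in K_\pi$ for some nonsingular $\pi:Y'\to Y\in S(\phi,e)$, and must show $f\in A_{\pi'}$ for some nonsingular $\pi'\in f$ dominating $\pi$. Since $f$ is a germ on $Y'$ near $p_\pi(f)$, I would write $f=g/h$ with $g,h\in A_\pi$ and blow up the ideal $(g,h)$ on $Y'$ (as in the proof of Lemma \ref{Lemma4}) to make $(g,h)$ locally principal on a nonsingular $Y''\to Y'$. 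The dichotomy $g\mid h$ or $h\mid g$ holds in $A_{\pi'}$ for the resulting $\pi'$; I must rule out $f\notin A_{\pi'}$, i.e. the case where $h\mid g$ fails. The obstruction is showing that the valuation-theoretic condition ``$f\in V_e$'' forces the \emph{correct} branch upstairs, namely that the value of $f$ under $V_e$ being nonnegative propagates down through $\phi'^*$ to force $f\in A_{\pi'}$.

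The \textbf{main obstacle} I anticipate is precisely this last compatibility: ensuring that membership in $V_e$, which is detected on $X$ after strict transform, descends to membership of $f$ in the appropriate $A_{\pi'}\subset V_f$ on the $Y$ side. The resolution should come from the injectivity established via Proposition \ref{Propreg} together with Lemma \ref{Lemma6}: because $\phi'^*$ is injective and local, and $p_{\phi^{-1}[\pi']}(e)$ sits over $p_{\pi'}(f)$, the element $f$ (viewed through $\phi'^*$ as $\phi'^*(f)\in\Omega_e$) lies in $A_{\phi^{-1}[\pi']}$ if and only if $f\in A_{\pi'}$ — the ``if'' is clear and the ``only if'' requires that a germ regular upstairs and pulled back from downstairs is already regular downstairs, which is exactly the injectivity of $\phi'^*$ applied to the pole ideal. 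I would therefore reduce the statement to checking that if $\phi'^*(f)\in V_e$ then $f\in A_{\pi'}$ after a further blow up, and close the argument by the fraction-field dichotomy of Lemma \ref{Lemma4} combined with this injectivity.
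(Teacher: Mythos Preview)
Your overall strategy matches the paper's proof: Proposition~\ref{Prop2} for $f\in\mathcal E_Y$; Lemma~\ref{Lemma3}, Lemma~\ref{Lemma6}, Proposition~\ref{reg3} and Proposition~\ref{Propreg} to produce the injective local homomorphisms $A_\pi\to A_\alpha$ and hence $\Omega_f\hookrightarrow\Omega_e$; and for $V_e\cap\Omega_f\subset V_f$, writing $h=a/b$ and blowing up $(a,b)$ on the $Y$ side to force the dichotomy $h\in A_{\pi'}$ or $1/h\in\mathfrak m_{A_{\pi'}}$.

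There is, however, a genuine gap in your last paragraph. You assert that ``$\phi'^*(f)\in A_{\phi^{-1}[\pi']}$ if and only if $f\in A_{\pi'}$'' and attribute the ``only if'' to injectivity of $\phi'^*$. This biconditional is false in general for injective local homomorphisms of regular local rings: for the blow-up map $\CC\{x,y\}\to\CC\{u,v\}$, $x\mapsto u$, $y\mapsto uv$, the element $y/x$ lies in the target but not in the source. Injectivity plays no role here. What the paper actually does is use \emph{locality} together with the dichotomy: if $h\notin A_{\pi'}$ then $1/h$ lies in the maximal ideal $\mathfrak m_{A_{\pi'}}$, and since $A_{\pi'}\to A_\delta$ is a local homomorphism, $1/h\in\mathfrak m_{A_\delta}$, whence $h\notin A_\delta$. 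To reach a contradiction one needs $h\in A_\delta$, and for this the paper explicitly constructs a nonsingular $\delta:X_2\to X\in e$ dominating \emph{both} $\phi^{-1}[Y_2]$ (so that the local homomorphism $A_{\pi'}\to A_\delta$ exists) and the $\beta:X_1\to X\in e$ for which $h\in A_\beta$ was already known (so that $h\in A_\delta$). You allude to locality but never build this common refinement on the $X$ side; without it, knowing $h\in V_e$ does not place $h$ in the particular $A_{\phi^{-1}[\pi']}$ you are using.
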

 
 If $\phi:X\rightarrow Y$ is not regular, then by Proposition \ref{Prop2}, the valuation ring $V_e$ associated to an \'etoile $e$ on $Y$ does not induce an \'etoile on $Y$ and does not induce an associated valuation ring. 
 
 \begin{proof} $f\in\mathcal E_Y$ by Proposition \ref{Prop2}.
 
 Suppose that $\pi:Y'\rightarrow Y\in f$ is nonsingular. By Lemma \ref{Lemma3}, there exists a nonsingular $\alpha:Z\rightarrow X\in e$ such that
 $\mbox{Hom}(Z,\phi^{-1}[Y'])\ne \emptyset$. We have associated local homomorphisms
 \begin{equation}\label{eq6}
 \mathcal O_{Y',p_{\pi}(f)}\stackrel{(\phi')^*}{\rightarrow} \mathcal O_{\phi^{-1}[Y'],p_{\phi^{-1}[\pi]}(e)}
 \rightarrow \mathcal O_{Z,p_{\alpha}(e)}
 \end{equation}
 where $\phi':\phi^{-1}[Y']\rightarrow Y'$ is the natural  morphism. By Proposition \ref{reg3}, the homomorphism of the sequence of complete local rings
 \begin{equation}\label{eq7}
 \hat{\mathcal O}_{Y',p_{\pi}(f)}\stackrel{\widehat{(\phi')^*}}{\rightarrow} \hat{\mathcal O}_{\phi^{-1}[Y'],p_{\phi^{-1}[\pi]}(e)}
 \rightarrow \hat{\mathcal O}_{Z,p_{\alpha}(e)}
 \end{equation}
 is 1-1. Thus the homomorphism in (\ref{eq6}) is 1-1. We have an associated inclusion of rings $A_{\pi}\rightarrow A_{\alpha}$ with 
 induced inclusion of quotient fields $K_{\pi}\rightarrow K_{\alpha}$. This gives us 1-1 homomorphisms $A_{\pi}\rightarrow V_e$ and $K_{\pi}\rightarrow \Omega_e$.

 Taking the limit over the nonsingular elements of $f$,  we have natural 1-1 homomorphisms $V_f\rightarrow V_e$ and $\Omega_f\rightarrow \Omega_e$.

Suppose that $h\in \Omega_f\cap V_e$. Then there exist nonsingular $\alpha:Y_1\rightarrow Y\in f$ and  $\beta:X_1\rightarrow X\in e$ such that $h\in A_{\beta}\cap K_{\alpha}$.
$h$ has an expression $h=\frac{a}{b}$ with $a,b\in A_{\alpha}$.  Let $U$ be a neighborhood of $p_{\alpha}(f)$ on which $a$ and $b$ are analytic.
There exists $\gamma:Y_2\rightarrow Y$ such that $Y_2$ is the blow up of an ideal sheaf of $\mathcal O_U$, $Y_2$ is nonsingular and $(f,g)\mathcal O_{Y_2}$
is locally principal. Thus $\gamma\in f$ is nonsingular, and either $h$ or $\frac{1}{h}\in A_{\gamma}$. 
There exists $\delta:X_2\rightarrow X$
in $e$ which is nonsingular, such that $\mbox{Hom}(X_2,\phi^{-1}[Y_2])\ne\emptyset$ and $\mbox{Hom}(X_2,X_1)\ne \emptyset$. We have constructed a commutative diagram:
$$
\begin{array}{ccccccccc}
&&&&&&X_2&&\\
&&&&&\swarrow&\delta\downarrow&\searrow&\\
Y_1&\leftarrow&Y_2&\leftarrow&\phi^{-1}[Y_2]&&&&X_1\\
\alpha\searrow&&\swarrow\gamma&&&\searrow&&\swarrow\beta&\\
&Y&&&&&X&&\end{array}
$$

If $h\in A_{\gamma}$ then $h\in V_f$. Suppose that $h\not\in A_{\gamma}$. Then $\frac{1}{h}\in \mathfrak m$ where $\mathfrak m$ is the maximal ideal of
$A_{\gamma}$. Now $A_{\gamma}\rightarrow A_{\delta}$ is a local homomorphism, so $\frac{1}{h}$ is in the maximal ideal $\mathfrak n$ of $A_{\delta}$.
But this is impossible since $h\in A_{\delta}$. Thus we must have $h\in A_{\gamma}\subset V_e$.
 \end{proof}

 Suppose that $Y$ is a reduced  complex space and $e\in\mathcal E_Y$. Let $V_e$ be the valuation ring associated to $e$.
 We have a directed system $\{A_{\pi}\}$ for $\pi:Y'\rightarrow Y\in e$, where we define $A_{\pi}=\mathcal O_{\mbox{DC}_e(Y'),p_{\pi}(e)}$.
 In the case when $\pi$ is nonsingular, $Y'=\mbox{DC}_e(Y')$, so this agrees with our earlier definition.  Taking the limit over this larger directed system again gives us the same limit $V_e$, by Lemma \ref{Lemma3}.

 For  $\pi:Y_0\rightarrow Y\in e$, let 
  $V_{\pi}=V_e\cap K_{\pi}$, which is a valuation ring of $K_{\pi}$, which dominates $A_{\pi}$. Let $\mathfrak m_{\pi}$ be the maximal ideal of $A_{\pi}$.
  Let $\nu_e$ be a valuation of $\Omega_e$ whose valuation ring is $V_e$, and let $\nu_{\pi}$ be the restriction of $\nu$ to $K_{\pi}$, so that $V_{\pi}$ is the valuation ring of $\nu_{\pi}$.

 \begin{Lemma}\label{ratrank} $V_e$ has finite rational rank, which is less than or equal to  $\dim Y$.
 \end{Lemma}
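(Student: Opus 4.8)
The plan is to compute the rational rank of $V_e$ through the cofinal subsystem of nonsingular $\pi\in e$ and then to bound it one $\pi$ at a time by the Abhyankar inequality. By Lemma \ref{Lemma3} the nonsingular $\pi$ are cofinal in the directed system defining $V_e$, so it suffices to work with these; for such $\pi$ the ring $A_\pi=\mathcal O_{Y_\pi,p_\pi(e)}$ is a regular (hence Noetherian) local domain with quotient field $K_\pi$. Fix a valuation $\nu_e$ of $\Omega_e$ with ring $V_e$ and value group $\Gamma_e$. Its restriction $\nu_\pi=\nu_e|K_\pi$ has valuation ring $V_\pi=V_e\cap K_\pi$ and value group $\Gamma_\pi=\nu_e(K_\pi^*)$, and since $\Omega_e^*=\bigcup_\pi K_\pi^*$ we have $\Gamma_e=\bigcup_\pi\Gamma_\pi$. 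The transition maps are field inclusions $K_\pi\hookrightarrow K_{\pi'}$ compatible with $\nu_e$, so the induced maps $\Gamma_\pi\hookrightarrow\Gamma_{\pi'}$ are injective; tensoring with the flat module $\QQ$ gives $\Gamma_e\tensor_{\ZZ}\QQ=\varinjlim(\Gamma_\pi\tensor_{\ZZ}\QQ)$, whence
$$
\mbox{rat.rank}(V_e)=\sup_{\pi}\mbox{rat.rank}(V_\pi),
$$
the supremum being taken over nonsingular $\pi\in e$.

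For each such $\pi$ I would then apply the Abhyankar inequality to the Noetherian local domain $A_\pi$ and the valuation ring $V_\pi$ of its quotient field. Because $A_\pi\subseteq V_e\cap K_\pi=V_\pi$ and the homomorphisms of the directed system are local, $V_\pi$ dominates $A_\pi$, and therefore
$$
\mbox{rat.rank}(V_\pi)+\mbox{tr.deg}_{A_\pi/\mathfrak m_\pi}(V_\pi/\mathfrak m_{V_\pi})\le\dim A_\pi,
$$
so in particular $\mbox{rat.rank}(V_\pi)\le\dim A_\pi$. It then remains to check $\dim A_\pi\le\dim Y$, which I would obtain by tracking the distinguished component through the tower $Y=Y_0,Y_1,\ldots,Y_n=Y_\pi$ of local blow ups factoring $\pi$: each $Y_i$ is nonsingular, and Lemma \ref{Lemma1} guarantees $\mbox{DC}_e(U_i)\not\subset E_i$, so every local blow up restricts to a generically isomorphic morphism on the distinguished component and hence preserves its dimension. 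Thus $\dim A_\pi=\dim\mbox{DC}_e(Y_\pi)=\dim\mbox{DC}_e(Y_0)\le\dim Y$, and combining the two displays yields $\mbox{rat.rank}(V_e)=\sup_\pi\mbox{rat.rank}(V_\pi)\le\dim Y<\infty$.

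The single nontrivial ingredient is the Abhyankar inequality for a valuation dominating a Noetherian local domain; granting it, the passage to the limit is purely formal. The one point needing care there is the injectivity of the transition maps $\Gamma_\pi\to\Gamma_{\pi'}$ on value groups, which is however automatic, since any homomorphism of fields is injective and the restrictions of $\nu_e$ are compatible. I expect no essential difficulty beyond correctly invoking Abhyankar's estimate; alternatively, one could prove $\mbox{rat.rank}(V_\pi)\le\dim A_\pi$ directly by observing that if $\nu_\pi(f_1),\ldots,\nu_\pi(f_r)$ are rationally independent with $f_i\in\mathfrak m_\pi$, then the distinct values of the monomials $f_1^{a_1}\cdots f_r^{a_r}$ force the relevant graded pieces to grow fast enough to bound $r$ by $\dim A_\pi$.
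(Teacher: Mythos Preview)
Your proof is correct and follows essentially the same route as the paper's: reduce the rational rank of $V_e$ to that of some $V_\pi$ via the directed system, and then invoke Abhyankar's inequality for the Noetherian local domain $A_\pi$ dominated by $V_\pi$. The paper phrases this as a short proof by contradiction (pick $n+1$ elements with rationally independent values, find a nonsingular $\pi$ whose $V_\pi$ contains them all, contradict Abhyankar), whereas you compute $\mbox{rat.rank}(V_e)=\sup_\pi\mbox{rat.rank}(V_\pi)$ directly; these are the same argument in contrapositive form. Your justification of $\dim A_\pi\le\dim Y$ via the distinguished component is more elaborate than necessary---the paper simply asserts the bound, and it follows already from the fact that each local blow up in the tower does not raise dimension---but it is not wrong.
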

 
 \begin{proof} Suppose that $V_e$ has rational rank larger than $n=\dim Y$. Choose $t_1,\ldots, t_{n+1}\in V_e$ such that their values are rationally independent.
 There exists a nonsingular $\pi:Y'\rightarrow Y\in e$ such that $t_1,\ldots, t_{n+1}\in V_{\pi}$. Thus $\nu_{\pi}$ has rational rank $>\dim Y$. But
 $\nu_{\pi}$ dominates the  noetherian local domain $\mathcal O_{Y',p_{\pi}(e)}$, which has dimension $\le \dim Y$. This is a contradiction to Abhyankar's inequality \cite{Ab}, Appendix 2 of \cite{ZS}.
 \end{proof}

  Thus the rank $r$ of $V_e$ is finite (by Lemma \ref{ratrank}), with 
  $$
  r=\mbox{rank}(V_e)\le \mbox{ratrank}(V_e)\le \dim Y.
  $$

  \begin{Theorem}\label{TheoremAbh} Suppose that $Y$ is a reduced complex analytic space and $e\in \mathcal E_Y$.  Let $V_{e}$ be the valuation ring associated to $e$ and suppose that $V_e$ has maximal rational rank equal to the dimension $n$ of $Y$. Then  the value group of $V_{e}$  is isomorphic to $\ZZ^n$ (as an unordered group).
  \end{Theorem}
  
  To prove Theorem \ref{TheoremAbh}, we require the following two Lemmas, which follow from the very nice properties of Abhyankar valuations.
  
  \begin{Lemma}\label{LemmaAbh1}
  Suppose that $R$ is  an equidimensional regular local ring of dimension $n$ and $\omega$ is a valuation of the quotient field of $R$ which dominates $R$ and has rational rank $n$. Suppose that $R$ has a regular system of parameters $x_1,\ldots,x_n$ such that $\omega(x_1),\ldots,\omega(x_n)$ generate the value group $\Gamma_{\omega}$ of $\omega$. 
  
  Then there exists a unique extension $\hat\omega$ of $\omega$ to a valuation of the quotient field of the $m_R$-adic completion $\hat R$ of $R$  which dominates $\hat R$, and its value group $\Gamma_{\hat\omega}=\Gamma_{\omega}$.
  \end{Lemma}
  
  \begin{proof} Let $k$ be a coefficient field of $\hat R$, so that $\hat R=k[[x_1,\ldots,x_n]]$. The unique extension $\hat\omega$ of $\omega$ is then defined by
  $$
  \hat\omega(f)=\min\{i_1\omega(x_1)+\cdots+i_n\omega(x_n)\mid a_{i_1,\ldots,i_n}\ne 0\}
  $$
  for $f=\sum a_{i_1,\ldots,i_n}x_1^{i_1}\cdots x_n^{i_n}\in \hat R$
  with $a_{i_1,\ldots,i_n}\in k$.
  \end{proof}
  
  \begin{Lemma}\label{LemmaAbh2} Suppose that $R$ is  an equidimensional regular local ring of dimension $n$ and $\omega$ is a valuation of the quotient field of $R$ which dominates $R$ and has rational rank $n$. Suppose that $R$ has a regular system of parameters $x_1,\ldots,x_n$ such that $\omega(x_1),\ldots,\omega(x_n)$ generates the value group $\Gamma_{\omega}$ of $\omega$.  Suppose that $I$ is an ideal in $R$. Then there exist monomials $M_1,\ldots, M_r$ in $x_1,\ldots,x_n$ such that 
  $$
  S=R\left[\frac{M_2}{M_1},\ldots,\frac{M_r}{M_1}\right]_{m_\omega\cap R\left[\frac{M_2}{M_1},\ldots,\frac{M_r}{M_1}\right]}
  $$
  is a regular local ring with regular parameters $y_1,\ldots,y_n$ such that there exist a matrix $A=(a_{ij})$ of natural numbers with determinant $\pm1$ such that
  $$
  x_i=\prod_{j=1}^ny_j^{a_{ij}}\mbox{ for }1\le i\le n
  $$
  and $R\rightarrow S$ factors as a finite sequence of local blow ups of nonsingular sub varieties, whose ideals are generated by Laurent  monomials in $x_1,\ldots,x_n$,
  such that $IS$ is a principal ideal.
  
  \end{Lemma}  
  
  \begin{proof} This follows from the maximal rank case of Zariski's proof of embedded local uniformization \cite{LU}.
  \end{proof}
  \vskip .2truein
  We now prove Theorem \ref{TheoremAbh}. 
  
  Let $f_1,\ldots,f_n\in V_e$ be such that $\nu_e(f_1),\ldots,\nu_e(f_n)$ are rationally independent. There exists a nonsingular $\sigma\in e$ such that $f_1,\ldots,f_n\in A_{\sigma}$.
  Let $\nu_{\sigma}$ be the restriction of $\nu_e$ to the field $K_{\sigma}$. Then $\nu_{\sigma}$ has rational rank $n=\dim A_{\sigma}$, so by Abhyankar's Theorem \cite{Ab}, the value group $\Gamma=\Gamma_{\sigma}$
  of $\nu_{\sigma}$ is isomorphic to $\ZZ^n$ as an unordered group. Let $g_1,\ldots,g_n\in V_{\sigma}$ be such that $\nu_{\sigma}(g_1),\ldots,\nu_{\sigma}(g_n)$ generate $\Gamma$, and let 
  $A_{\sigma}\rightarrow B$ be a sequence of algebraic blow ups along $\nu_{\sigma}$ of  regular prime ideals  such that the regular local ring $B$ has regular parameters $x_1,\ldots,x_n$ such that 
  each $g_i$ is a monomial in $x_1,\ldots,x_n$ (this is possible for instance by \cite{LU}). Then $\nu_{\sigma}(x_1),\ldots,\nu_{\sigma}(x_n)$ are a free basis of the unordered group 
  $\Gamma_{\sigma}$.
  
  By our construction, there exists a nonsingular $\lambda:Z\rightarrow Y$ in $e$ such that $A_{\lambda}=B^{\rm an}$.  By Lemma \ref{LemmaAbh1}, we have that the value group
  $\Gamma_{\lambda}$ of $\nu_{\lambda}=\nu_e|K_{\lambda}$ is $\Gamma$, and $A_{\lambda}$ has a regular system of parameters $x_1,\ldots,x_n$ such that $\nu(x_1),\ldots,\nu(x_n)$ is a free basis of the group $\Gamma$.

  Suppose that $\pi\in e$ is nonsingular. Then $\pi$ has a factorization by local blow ups  
  $$
  Y'=Y_r\stackrel{\pi_{r-1}}{\rightarrow} \cdots \stackrel{\pi_1}{\rightarrow} Y_1\stackrel{\pi_0}{\rightarrow} Y_0=Y
  $$
  where each $\pi_i$ is the blow up of a closed analytic subspace $E_i$ of an open neighborhood $U_i$ of $e_{Y_{i-1}}$ and each $Y_i$ is nonsingular (for $i>0$).
  
  We will show that there exists a nonsingular $Z'\rightarrow Z\rightarrow Y\in e$ which has a factorization
  $$
  Z'=Z_r\stackrel{\tau_{r-1}}{\rightarrow} \cdots \stackrel{\tau_1}{\rightarrow} Z_1\stackrel{\tau_0}{\rightarrow} Z_0=Z\rightarrow Y
  $$
  of local blowups such that each $Z_i\rightarrow Y\in e$ is nonsingular, and there exist morphisms $\alpha_i:Z_i\rightarrow Y_i$ for all $i$, giving a commutative diagram
  \begin{equation}\label{eqAb1}
  \begin{array}{lllllll}
   Y'=Y_r&\stackrel{\pi_{r-1}}{\rightarrow}&\cdots &\stackrel{\pi_1}{\rightarrow} &Y_1&\stackrel{\pi_0}{\rightarrow}& Y_0=Y\\  
  \uparrow\alpha_r&&&&\uparrow\alpha_1&&\uparrow\alpha_0=\lambda\\
   Z'=Z_r&\stackrel{\tau_{r-1}}{\rightarrow}& \cdots &\stackrel{\tau_1}{\rightarrow} &Z_1&\stackrel{\tau_0}{\rightarrow} &Z_0=Z
   \end{array}
   \end{equation}
   and further, the restriction $\nu_{\lambda\tau_0\cdots\tau_i}$ of $\nu_e$ to the field $K_{ \lambda\tau_0\cdots\tau_i}$ has value group $\Gamma_{\lambda\tau_0\cdots\tau_i}=\Gamma$ and   $A_{\lambda\tau_0\cdots\tau_i}$ has a regular system of parameters $z_1,\ldots,z_n$ (depending on $i$) such that $\nu(z_1),\ldots, \nu(z_n)$ is a free basis of $\Gamma$.
   
  We will construct the diagram (\ref{eqAb1}) by induction on $i$. Suppose that we have constructed $Z_j$  and morphisms $\tau_j$ and $\alpha_j$ for for $j\le i$. Let $R=\mathcal O_{Z_i,p_{Z_i}(e)}$.
  $R$ satisfies the assumptions of Lemma \ref{LemmaAbh2} (with $\omega=\nu_{\lambda\tau_0\cdots\tau_{i-1}}$)
  and $I=\mathcal I_{E_i,p_{Y_i}(e)}R$. We apply Lemma \ref{LemmaAbh2} to $R$ and $I$ to obtain $R\rightarrow S$ such that $IS$ is principal. Let $W$ be an open subset of $\alpha_i^{-1}(U_i)$ which contains $p_{Z_i}(e)$ and so that $z_1,\ldots,z_n$ are coordinates on $W$, and let $W_1$ be the blow up of $(M_1,\ldots,M_r)\mathcal O_W$ (with the notation of Lemma \ref{LemmaAbh2}).  There exists an open neighborhood $Z_{i+1}$ of $p_{W_1}(e)$ in $W_1$ such that $Z_{i+1}\rightarrow Y_0\in e$ is nonsingular and $\mathcal I_{E_i}\mathcal O_{Z_{i+1}}$ is locally principal. Thus there is a factorization $\alpha_{i+1}:Z_{i+1}\rightarrow Y_{i+1}$ by the universal property of the blow up. By our construction, we have that $A_{\lambda\tau_0\cdots\tau_{i+1}}=S^{\rm an}$ so that the value group $\Gamma_{\lambda\tau_0\cdots\tau_{i+1}}=\Gamma$ (by Lemma \ref{LemmaAbh1}).
  
 By induction, we may construct a diagram (\ref{eqAb1}) for any nonsingular $\pi\in e$. Since $A_{\pi}\subset A_{ \lambda\tau_0\cdots\tau_{r-1}}$ we have that the value group $\Gamma_{\pi}$ of the restriction of $\nu_e$ to the field $K_{\pi}$ is contained in $\Gamma$. Thus $\Gamma_e=\Gamma$ is isomorphic to $\ZZ^n$ as an unordered group.

  \section{Pathological behavior of the valuation associated to an \'etoile}
  
  Suppose that $K$ is an algebraic function field over a field $k$, and $\nu$ is a valuation of
  $K$ (which vanishes on $k\setminus \{0\}$). If $Z$ is a  proper model of  $K$ (the function field $k(Z)=K$), then there exists
  a unique (not necessarily closed) point $a\in Z$ such that the valuation ring $V_{\nu}$ in $K$ dominates the local ring 
  ${\mathcal O}_{Z,a}$. This point is called the center of $\nu$ on $Z$.
  
 Let $r$ be the rank of $\nu$, and let 
 $$
 0=P_0\subset P_1\subset \cdots \subset P_r
 $$
 be the chain of distinct prime ideals in $V_{\nu}$.

  Suppose that $Z_2$, $Z_1$ are proper models of $K$, and $Z_2$ dominates $Z_1$ in a neighborhood of the center of $\nu$. 
  Then we have a commutative diagram (in a neighborhood of the center of $\nu$)

  $$
  \begin{array}{rcl}
  \mbox{spec}(V)&\stackrel{\pi_2}{\rightarrow}& Z_2\\
  \pi_1 &\searrow &\downarrow\\
  &&Z_1
  \end{array}
  $$

  Let $W_j(i)$ be the Zariski closure of $\pi_i(P_j)$ in $Z_i$ for $0\le j\le r$.
  Then for all $j$,
  \begin{equation}\label{E1}
  \dim W_j(1)\le \dim W_j(2).
  \end{equation}
  
  In fact, after an appropriate blow up, the dimensions of the centers $W_j$ on $Z$ stablilize to $\mbox{trdeg}_k((V/P_j)_{P_j})$.
  
  The case of analytic spaces is completely different from that of algebraic varieties, as the inequality \ref{E1} does not hold for the centers of a valuation associated to an \'etoile.
  
  If $e\in \mathcal E_Y$ and $\pi:Y'\rightarrow Y\in e$, then there is a natural homomorphism
  $\mbox{Spec}(V_e)\rightarrow \mbox{Spec}({\mathcal O}_{Y',p_{\pi}(e)})$. Suppose that $Q$ is a prime ideal in $V_e$.  Let 
  $a'=Q\cap {\mathcal O}_{Y',p_{\pi}(e)}$, 
  a prime ideal in ${\mathcal O}_{Y',p_{\pi}(e)}$. In an analytic  neighborhood of $p_{\pi}(e)$ in $Y'$ we have an irreducible
  analytic set $Z(a')$. We will call this the center of $Q$ on $Y'$.
  
  \begin{Example}\label{pathval} There exists an \'etoile $e$ on $Y_0=\CC^4$ such that $V_e$ has rank larger than 1, and $V_e$ has 
   a proper prime ideal $Q$ such  that there exists an infinite chain
  $$
  \rightarrow \cdots \rightarrow Y_m\rightarrow \cdots \rightarrow Y_2\rightarrow Y_1\rightarrow Y_0
  $$
  with $Y_m\rightarrow Y_0\in e$ for all $m$, such that the center of $Q$ on $Y_m$ has dimension 3 if $m$ is even and the center of $Q$ on $Y_m$ has dimension 2 if $m$ is odd. 
  \end{Example}
  
  To construct the example, we need the following lemma.
  
  \begin{Lemma}\label{LemmaG3} Suppose that $K$ is a field and $R$ is a local  subring. Suppose that $q_1\subset q_2$ are distinct nonzero prime ideals in $R$.
  Then there exists a valuation ring $V$ of $K$ and nonzero prime ideals $p_1\subset p_2$ in $V$ such that $V$ dominates $R$, $p_1\cap R=q_1$ and
  $p_2\cap R=q_2$.
  \end{Lemma}
  
  \begin{proof} By Proposition 2.22 \cite{Ab2}, there exists a valuation ring $W$ of $K$ with maximal ideal $m_W$ such that $R\subset W$ and $m_W\cap R=p_1$.
  We have a commutative diagram where the horizontal arrows are inclusions,
  $$
  \begin{array}{lll}
  R&\rightarrow &W\\
  \downarrow&&\downarrow\\
  R/p_1&\rightarrow &W/m_W
  \end{array}
  $$
  
  Again by Proposition 2.22 \cite{Ab2}, there exists a valuation ring $D$ of $W/m_W$ such that $R/p_1\subset D$ and 
  $m_D\cap R/p_1=p_2/p_1$. Let $V=\pi^{-1}(D)$. $V$ is a valuation ring of $K$ as proven on page 57 of \cite{Ab2}. By Lemma 2.31 \cite{Ab2}, $Q=m_W\cap V$ satisfies $W=V_Q$. 
  
  By our construction, $V$ satisfies the conclusions of the lemma.
  \end{proof}
    
  We now recall an example in \cite{HLT}. Consider the local $\CC$-algebra homomorphism of analytic local rings 
  \begin{equation}\label{A1}
  \phi:\CC\{u,v,w\}\rightarrow \CC\{x,y\}
  \end{equation}
  defined by 
  $$
  u=y, v=ye^x, w=ye^{e^x}.
  $$
  We have that $\phi$ is 1-1. This can be seen as follows. Suppose that there is a nonzero series $\Lambda(u,v,w)$ such that 
  $\Lambda(y,ye^x,ye^{e^x})=0$. Collecting $y$ terms, we must then have that $1,e^x,e^{e^x}$ are algebraically dependent
  over $\CC$, which is a contradiction.
  
  Now make the substitution $u=u_1, v= u_1(v_1-1), w=u_1(w_1-e)$. We have that
  $u_1=y, v_1=e^x-1, w_1=e^{e^x}-e$. We thus have an induced local $\CC$-algebra homomorphism
  \begin{equation}\label{A2}
  \phi_1:\CC\{u_1,v_1,w_1\}\rightarrow \CC\{x,y\}.
  \end{equation}
  $\phi_1$ has a nontrivial  kernel which  is generated by  $w_1-e^{v_1+1}+e$.

  Now extend $\phi$ and $\phi_1$ to a commutative diagram of $\CC$-algebra homomorphisms 
  $$
  \begin{array}{lll}
  \CC\{t,u,v,w\}&\stackrel{\overline\phi}{\rightarrow}&\CC\{x,y\}\\
  \downarrow&\nearrow&\overline \phi_1\\
  \CC\{t_1,u_1,v_1,w_1\}&&
  \end{array}
  $$
  by defining $t=u_1t_1$ and $\overline \phi_1(t_1)=0$, so that $\overline \phi(t)=0$.
  Then the kernel of $\overline \phi$ is $(t)$ and the kernel of $(\overline \phi_1)$ is $(t_1,w_1-e^{v_1+1}+e)$.
  
  Let $R_0=\CC\{t,u,v,w\}$ with prime ideal $Q_0=(t)$ and $R_1=\CC\{t_1,u_1,v_1,w_1\}$ with prime ideal $Q_1=(t_1,w_1-e^{v_1+1}+e)$.
  Let $R_2=\CC\{t_2,u_2,v_2,w_2\}$ and define a $\CC$-algebra homomorphism $R_1\rightarrow R_2$ by the substitutions
  $$
  t_1=t_2, u_1=u_2, v_1=v_2, w_1-e^{v_1+1}+e=t_2w_2.
  $$
  Let $Q_2=(t_2)$. We have $Q_2\cap R_1=Q_1$ and $Q_1\cap R_0=Q_0$.
  
  Now we define $R_2\rightarrow R_3\rightarrow R_4$ to be the sequence $R_0\rightarrow R_1\rightarrow R_2$ with the variables
  $t_i, u_i,v_i,w_i$ for $0\le i\le 2$ changed to $t_{i+2},u_{i+2},v_{i+2},w_{i+2}$. Let $Q_i$ for $2\le i\le 4$ be the corresponding prime ideals in $R_i$. Repeating this construction we construct an infinite chain of convergent power series rings in four variables
  \begin{equation}\label{eqG1}
  R_0\rightarrow R_1\rightarrow \cdots
  \end{equation}
  such that the $R_i$ have prime ideals $Q_i$ such that
  $Q_i\cap R_{i-1}=Q_{i-1}$ for all $i$ and $Q_i$ has height 1 in $R_i$ if $i$ is even and $Q_i$ has height 2 in $R_i$ if $i$ is odd.
  
  Let $Y_0$ be the germ of $\CC^4$ at the origin $p_0$ which has local ring $\mathcal O_{Y_0,p_0}$. Let $Y_1$ be the blow up $p_0$, and let $p_1$ be the point of $Y_1$ whose local ring is $R_1$. Let $W$ be the germ of a nonsingular surface at $p_1$ which has local equations $t_1=w_1=0$, and let $Y_2$ be the blow up of $W$. Let $p_2$ be the point of $Y_2$ which has local ring $R_2$.
  Continuing this way, we see that the sequence of local rings (\ref{eqG1}) is a sequence of local rings of a sequence of local blowups
  \begin{equation}\label{eqG2}
  \cdots\rightarrow Y_1\rightarrow Y_0.
  \end{equation}

  Let $K_i$ be the quotient field of $R_i$ for $i\ge 0$. Set $L(0)=\lim_{\rightarrow}L_i$ and $A(0)=\lim_{\rightarrow} R_i$.
  There exists a valuation ring $V(0)$ of $L(0)$ which dominates $A(0)$, with prime ideals $Q(0)\subset m(0)$ such that
  $Q(0)\cap R_i=Q_i$ for all $i$ and $m(0)\cap R_i=m_{R_i}$ by Lemma \ref{LemmaG3}.
  
  Suppose that $W\rightarrow \mbox{Spec}(R_i)$ is a projective birational morphism such that $W$ is smooth. Let $p_W\in W$ be the unique point in the scheme $W$ such that $V(0)$ dominates $\mathcal O_{W,p_W}$. We further restrict to $W$ such that the center of 
  $Q(0)$ on $W$ is smooth at $p_W$.

  Let $\overline W$ be the germ of a complex analytic space associated to $W$ at $p_W$. The center of $Q(0)$ on $W$ is nonsingular, and extends uniquely to a prime ideal in $\mathcal O_{\overline W,p_W}$. Then the analytic local rings $\mathcal O_{\overline W,p_W}$ form a directed system as do their quotient fields. Let $A(1)$ be the limit of the local rings $\mathcal O_{\overline W,p_W}$, and let $L(1)$ be the quotient field of $A(1)$.

  Since $V(0)$ is the union of the local rings $\mathcal O_{W,p_W}$, we have that $A(1)$ dominates $V(0)$.
  Further, by our construction, there exists a prime ideal $Q'$ in $A(1)$ which dominates $Q(0)$.
  By Lemma \ref{LemmaG3} there exists a valuation ring $V(1)$ of $L(1)$ which dominates $A(1)$ (and thus dominates $V(0)$), with prime ideals $Q(1)\subset m(1)$ such that
  $Q(1)\cap A(1)=Q'$  and $m(1)\cap A(1)=m_{A(1)}$. We necessarily have that $V(1)\cap L(0)=V(0)$.
  
  We now construct a local ring $A(2)$ with a distinguished (non maximal) prime ideal $Q''$. Associated to any projective birational
   morphism $X\rightarrow \mbox{Spec}(\mathcal O_{\overline W,p_W})$ where $\overline W$ is a germ of an analytic space used in the 
  construction of $A(1)$, and $X$ is smooth with smooth center by $Q(1)$, we obtain an associated germ of a complex analytic space $\overline X$, and we have a directed system of local rings associated to such $\overline X$. Let $A(2)$ be the limit of these local rings, with quotient field $L(2)$.   That is,  $A(2)$ is the  union of all $\mathcal O_{X}^{\rm an}(U)$ with $U$ an open neighborhood in $X^{\rm an}$ of the center of $V(1)$. Again, we have that $A(2)$ is a local ring with a distinguished (nonmaximal) ideal $Q''$.
  We have that $V(1)\subset A(2)$ and $m_{A(2)}\cap V(1)=m_{V(1)}$, $Q''\cap V(1)=Q(1)$. 
  By Lemma \ref{LemmaG3} there exists a valuation ring $V(2)$ of $L(2)$ which dominates $A(2)$ (and thus dominates $V(1)$), with prime ideals $Q(2)\subset m(2)$ such that
  $Q(2)\cap A(2)=Q''$  and $m(2)\cap A(2)=m_{A(2)}$. We necessarily have that $V(2)\cap L(1)=V(1)$.
  
  We now repeat this construction over all natural numbers, starting  by applying the construction of $A(1)$ and then $A(2)$ from $A(0)$ to $A(2)$,   to construct an increasing sequence of
  fields $L(i)$ with valuation rings $V(i)$ (for $i\in \NN$) such that $V(i)$ contains a nonmaximal ideal $Q(i)$ with $L(i)\subset L(i+1)$ , $V(i+1)\cap L(i)=V(i)$, $Q(i+1)\cap V(i)=Q(i)$ and $m(i+1)\cap V(i)=m(i)$  for all $i$.
  
  Let $L=\lim L(i)$. $A=\lim V(i)$ is a local ring with a distinguished nonmaximal prime ideal $Q^*$. By Lemma \ref{LemmaG3} there exists a valuation ring $V$ of $L$ which dominates $A$ , with prime ideals $Q\subset m_V$ such that
  $Q\cap A=Q^*$  and $m_V\cap A=m_{A}$.

  Let $e_0$ be the subcategory of $\mathcal E(Y_0)$ of morphisms  of analytic spaces used in the construction of $V$ and $L$. Then $e_0$ satisfies 1) - 3) of Definition \ref{EtoileDef} of an \'etoile, so there exists by Zorn's lemma an \'etoile  $e\in \mathcal E_{Y_0}$ containing $e_0$ (Lemma 2.2 \cite{H}). 
  
  By Lemmas \ref{Lemma2} and \ref{Lemma3} $e$ is unique. In particular, we have that $K_e=L$ and $V_e=V$,
  so the conclusions of the example hold.

\end{document}